\numberwithin{equation}{section}
\title{Controlling Unknown Linear Dynamics with Almost Optimal Regret}
\author{J. Carruth, M. F. Eggl, C. Fefferman, C. W. Rowley}
\date{\today}
\newcommand{\opt}{\text{opt}}
\newcommand{\E}{\text{E}}
\newcommand{\eE}{\emph{E}}
\newcommand{\R}{\mathbb{R}}
\newcommand{\bg}{\text{big}}
\newcommand{\prob}{\text{Prob}}
\newcommand{\eprob}{\emph{Prob}}
\newcommand{\cost}{\textsc{Cost}}
\newcommand{\mx}{\text{max}}
\newcommand{\emx}{\emph{max}}
\newcommand{\las}{\text{L}a\text{S}}
\newcommand{\lqs}{\text{L}q\text{S}}
\newcommand{\elas}{\emph{L}a\emph{S}}
\newcommand{\elqs}{\emph{L}q\emph{S}}
\newcommand{\cJ}{\mathcal{J}}
\newcommand{\cD}{\mathcal{D}}
\newcommand{\cF}{\mathcal{F}}
\newcommand{\var}{\text{Var}}
\newcommand{\pro}{\textsc{Pro}}
\newcommand{\sml}{\text{small}}
\newcommand{\esml}{\emph{small}}
\newcommand{\tny}{\text{tiny}}
\newcommand{\etny}{\emph{tiny}}
\newcommand{\mr}{\text{MR}}
\newcommand{\hr}{\text{HR}}
\newcommand{\ehr}{\emph{HR}}
\newcommand{\rare}{\text{rare}}
\newcommand{\br}{\text{BR}}
\newcommand{\cg}{\text{CG}}
\newcommand{\osc}{\text{osc}}
\newcommand{\regret}{\textsc{Regret}}
\newcommand{\bayes}{\text{Bayes}}
\newcommand{\ebayes}{\emph{Bayes}}
\newcommand{\Prologue}{\textsc{Prologue}}
\newcommand{\Main}{\textsc{Main Act}}
\newcommand\extrafootertext[1]{%
    \bgroup
    \renewcommand\thefootnote{\fnsymbol{footnote}}%
    \renewcommand\thempfootnote{\fnsymbol{mpfootnote}}%
    \footnotetext[0]{#1}%
    \egroup
}
\newtheorem{thm}{Theorem}[section]
\newtheorem{lem}[thm]{Lemma}
\newtheorem{rmk}{Remark}[section]
\newtheorem{claim}{Claim}
\newtheorem{cor}{Corollary}[section]
\begin{document}
\maketitle

\begin{abstract}
    Continuing from the companion paper \cite{boundeda}, we consider a simple control problem in which the underlying dynamics depend on a parameter $a$ that is unknown and must be learned. In this paper, we assume that $a$ can be any real number and we do not assume that we have a prior belief about $a$. We seek a control strategy that minimizes a quantity called the regret. Given any $\varepsilon>0$, we produce a strategy that minimizes the regret to within a multiplicative factor of $(1+\varepsilon)$.
\end{abstract}

\section{Introduction}\label{sec: intro}
Continuing from \cite{carruth2022controlling,fefferman2021optimal}, we explore a new flavor of adaptive control theory, which we call ``agnostic control.'' \extrafootertext{This work was supported by AFOSR grant FA9550-19-1-0005 and by the Joachim Herz Foundation.}

Many works in adaptive control theory attempt to control a system whose underlying dynamics are initially unknown and must be learned from observation. The goal is then to bound $\regret$, a quantity defined by comparing our expected cost with that incurred by an opponent who knows the underlying dynamics. Typically one tries to achieve a regret whose order of magnitude is as small as possible after a long time. Adaptive control theory has extensive practical applications; see, e.g., \cite{bertsekas2012dynamic, cesa2006prediction, hazancontrol, powell2007approximate}.

In some applications, we don't have the luxury of waiting for a long time. This is the case, e.g., for a pilot attempting to land an airplane following the sudden loss of a wing, as in \cite{Brazy:2009}. Our goal, here and in \cite{boundeda}, is to achieve the absolute minimum possible regret over a fixed, finite time horizon. This poses formidable mathematical challenges, even for simple model systems.

We will study a one-dimensional, linear model system whose dynamics depend on a single unknown parameter $a$. When $a$ is large positive, the system is highly unstable. (There is no ``stabilizing gain'' for all $a$.) We suppose that the unknown $a$ may be any real number and we don't assume that we are given a Bayesian prior probability distribution for it.

Modulo an arbitrarily small increase in regret, we reduce the problem to a Bayesian variant in which the unknown $a$ is confined to a finite set and governed by a prior probability distribution.

For the Bayesian problem, our task is to find a strategy that minimizes the expected cost. This leads naturally to a PDE (a Bellman equation).

In \cite{boundeda}, we prove that the optimal strategy for Bayesian control is indeed given in terms of the solution of the Bellman equation, and that any strategy significantly different from that optimum incurs a significantly higher cost. We proceed modulo assumptions about existence and regularity of the relevant PDE solutions, for which we lack rigorous proofs. (However, we have obtained numerical solutions, which seem to behave as expected.)

Let us now explain the above in more detail. 

\subsection{The Model System}

Our system consists of a particle moving in one dimension, influenced by our control and buffeted by noise. The position of our particle at time $t$ is denoted by $q(t) \in \R$. At each time $t$, we may specify a ``control'' $u(t) \in \R$, determined by history up to time $t$, i.e., by $(q(s))_{s \in [0,t]}$. A ``strategy'' (aka ``policy'') is a rule for specifying $u(t)$ in terms of $(q(s))_{s \in [0,t]}$ for each $t$. We write $\sigma, \sigma', \sigma^*, \text{etc.}$ to denote strategies. The noise is provided by a standard Brownian motion $(W(t))_{t\ge 0}$.

The particle moves according to the stochastic ODE
\begin{equation}\label{eq: intro 1}
dq(t) = \big(aq(t) + u(t)\big)dt + dW(t), \qquad q(0) = q_0,
\end{equation}
where $a$ and $q_0$ are real parameters. Due to the noise in \eqref{eq: intro 1}, $q(t)$ and $u(t)$ are random variables.

Over a time horizon $T>0$, we incur a $\cost$, given\footnote[2]{By rescaling, we can consider seemingly different cost functions of the form $\int_0^T(q^2+\lambda u^2)$ for $\lambda >0$.} by
\begin{equation}\label{eq: intro 2}
    \cost = \int_0^T \big\{ (q(t))^2 + (u(t))^2\big\} dt.
\end{equation}
This quantity is a random variable determined by $a, q_0,T$ and our strategy $\sigma$. Here, the starting position $q_0$ and time horizon $T$ are fixed and known. 

We would like to keep our cost as low as possible. We examine several variants of the above control problem, making successively weaker assumptions regarding our knowledge of the parameter $a$. Those variants are as follows.

\subsection*{Variant I: Classical Control}
We suppose first that the parameter $a$ is known. We write $\cJ(\sigma, a; T,q_0)$ to denote the expected $\cost$ incurred by executing a given strategy $\sigma$. Our task is to pick $\sigma$ to minimize $\cJ(\sigma, a; T,q_0)$. As shown in textbooks (e.g., \cite{astrom}), the optimal strategy $\sigma$ is given by
\[
u(t) =- \kappa(T-t,a)q(t)
\]
for a known elementary function $\kappa$; see also Section \ref{sec: setup} below. We denote this strategy by $\sigma_\opt(a)$. It will be important later to note that $\sigma_\opt(a)$ satisfies the inequality
\begin{equation}\label{eq: intro 3}
    |u(t)|\le C \max\{a,1\}\cdot |q(t)|\;\text{for an absolute constant } C.
\end{equation}

\subsection*{Variant II: Bayesian Control}
Next, suppose that the parameter $a$ is unknown, but is subject to a given prior probability distribution $d\prob(a)$ supported in an interval $[-A,A]$. Our goal is now to pick a strategy $\sigma$ to minimize our expected cost, given by
\begin{equation}\label{eq: intro 4}
    \int_{-A}^A \cJ(\sigma,a;T,q_0)\  d\prob(a).
\end{equation}

To solve this problem, we first note a major simplification: In principle, a strategy $\sigma$ is a one-parameter family of functions on an infinite-dimensional space, because it specifies $u(t)$ in terms of the path $(q(s))_{s \in [0,t]}$ for each $t$. However, one computes that the posterior probability distribution for the unknown~$a$, given past history $(q(s))_{s \in [0,t]}$, is determined by the prior $d\prob(a)$, together with the two observable quantities
\begin{equation}\label{eq: intro 5}
   \zeta_1(t) = \int_{0}^t q(s)\ dq(s) - \int_0^t q(s) u(s) \ ds,\qquad \zeta_2(t) = \int_0^t (q(s))^2 \ ds.
\end{equation}
Therefore, it is natural to suppose that our optimal strategy $\sigma_{\bayes}(d\prob)$ takes the form
\begin{equation}\label{eq: intro 6}
u(t) = \tilde{u}(q(t),\zeta_1(t),\zeta_2(t),t)
\end{equation}
for a function $\tilde{u}$ on $\R^4$.

So, instead of looking for a one-parameter family of functions on an infinite-dimensional space, we merely have to specify a function $\tilde{u}$ of four variables. It isn't hard to derive a PDE (the Bellman equation) for the function $\tilde{u}(q,\zeta_1,\zeta_2,t)$ that, according to heuristic reasoning, minimizes our expected cost. We have produced approximate solutions $\tilde{u}$ to the Bellman equation in numerical simulations, but we don't have rigorous proofs of existence or regularity. We proceed by imposing the

\underline{\textsc{PDE Assumption}}: The Bellman equation has a smooth solution, and the resulting control strategy satisfies the estimate
\begin{equation}\label{eq: intro 6.5}
  |u(t)| \le C_0 A^{m_0}[|q(t)|+1],  
\end{equation}
for constants $C_0, m_0$ independent of $A$.

Since our prior distribution $d\prob(a)$ is supported in $[-A,A]$, a glance at \eqref{eq: intro 3} suggests that the optimal Bayesian strategy should satisfy
\[
|u(t)| \le C \max\{A,1\}\cdot|q(t)|.
\]
Our numerical simulations appear to confirm this belief. Accordingly, \eqref{eq: intro 6.5} seems to be a very safe assumption.

Under the above PDE Assumption, we prove in \cite{boundeda} the natural result that the strategy $\sigma_{\bayes}(d\prob)$ arising from the Bellman equation indeed minimizes the expected cost given by \eqref{eq: intro 4}. Moreover, any strategy that differs significantly from $\sigma_{\bayes}(d\prob)$ incurs a significantly higher expected cost.

Our results on Bayesian control pave the way for our analysis of agnostic control. 

\subsection*{Variant III: Agnostic Control for Bounded $a$}
We suppose that our parameter $a$ is confined to a bounded interval $[-A,A]$ but is otherwise unknown. In particular, we don't assume that we are given a Bayesian prior probability distribution $d\prob(a)$. Consequently, we cannot define a notion of expected cost by formula \eqref{eq: intro 4}.

Instead, our goal will be to minimize \emph{worst-case regret}, defined by comparing the performance of our strategy with that of the optimal known-$a$ strategy $\sigma_\opt(a)$. We will introduce several variants of the notion of regret.

Let us fix a starting position $q_0$, a time horizon $T$, and an interval $[-A,A]$ guaranteed to contain the unknown $a$. To a given strategy $\sigma$, we associate the following functions on $[-A,A]$:
\begin{itemize}
    \item \emph{Additive Regret}, defined as
    \[
    \text{AR}(\sigma,a) = \cJ(\sigma,a;T,q_0) - \cJ(\sigma_\opt(a),a;T,q_0) \ge 0.
    \]
    \item \emph{Multiplicative Regret} (aka ``competitive ratio''), defined as
    \[
    \mr(\sigma,a) = \frac{\cJ(\sigma,a;T,q_0)}{\cJ(\sigma_\opt(a),a;T,q_0)} \ge 1.
    \]
    \item \emph{Hybrid Regret}, defined in terms of a parameter $\gamma>0$ by setting 
    \[
    \hr_\gamma(\sigma,a) = \frac{\cJ(\sigma,a;T,q_0)}{\cJ(\sigma_\opt(a),a;T,q_0)+\gamma}
    \]
\end{itemize}
Writing $\regret(\sigma,a)$ to denote any one of the above three functions on $[-A,A]$, we define the \emph{worst-case regret}
\begin{equation}\label{eq: intro wcr}
\textsc{Regret}^*(\sigma) = \sup\{\regret(\sigma,a): a \in [-A,A]\}.
\end{equation}
We seek a strategy $\sigma$ that minimizes worst-case regret.

The above notions are useful in different regimes. If we expect to pay a large cost, then we care more about multiplicative regret than about additive regret. (If we have to pay $10^9$ dollars, we are unimpressed by a saving of $10^5$ dollars.) Similarly, if our expected cost is small, then we care more about additive regret than about multiplicative regret. (If we pay only $10^{-5}$ dollars, we don't care that we might instead pay $10^{-9}$ dollars.) If we fix $\gamma$ to be a cost we are willing to neglect, then hybrid regret $\hr_\gamma(\sigma,a)$ provides meaningful information regardless of the order of magnitude of our expected cost.

So far, we have defined three flavors of worst-case regret, and posed the problem of minimizing that regret. The solution to our agnostic control problem is given by the following result, proved in \cite{boundeda}.

\begin{thm}\label{thm: intro 1}
    Fix $[-A,A]$, $q_0$, $T$ (and $\gamma$ if we use hybrid regret). Assume the \emph{PDE Assumption}. Then
    \begin{itemize}
        \item[\emph{(I)}] There exists a probability measure $d\eprob_*$ on $[-A,A]$ for which the optimal Bayesian strategy $\sigma_{\ebayes}(d\eprob_*)$ minimizes the worst-case regret among all possible strategies. \\Moreover,
        \item[\emph{(II)}] The measure $d\eprob_*$ is supported on a finite set $E \subset [-A,A]$, where
        \item[\emph{(III)}] $E$ is precisely the set of points $a \in [-A,A]$ at which the function $[-A,A] \ni a \mapsto \regret(\sigma_{\ebayes}(d\eprob_*),a)$ achieves its maximum.
    \end{itemize}
\end{thm}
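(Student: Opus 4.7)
The plan is to treat this as a saddle-point problem: the controller picks a strategy $\sigma$, Nature picks a prior $d\prob$ on $[-A,A]$, with payoff
\begin{equation*}
F(\sigma, d\prob) := \int_{-A}^A \regret(\sigma,a) \, d\prob(a).
\end{equation*}
First I would reduce the task of minimizing $F(\cdot, d\prob)$ over $\sigma$ to Bayesian control with an appropriately reweighted prior: divide $d\prob$ by $\cJ(\sigma_\opt(a),a;T,q_0)$ for multiplicative regret, or by $\cJ(\sigma_\opt(a),a;T,q_0) + \gamma$ for hybrid regret, and renormalize (both denominators are bounded above and away from zero on $[-A,A]$, so this is legitimate). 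By the Bayesian results of \cite{boundeda} the minimizer exists and is the Bayes-optimal strategy for the reweighted prior, which I will still write as $\sigma_\bayes(d\prob)$. Set $V(d\prob) := \inf_\sigma F(\sigma, d\prob) = F(\sigma_\bayes(d\prob), d\prob)$.

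Endow the set $\mathcal{P}$ of Borel probability measures on $[-A,A]$ with the weak-$*$ topology; it is compact and convex. Then $V$ is concave (infimum of affine functionals of $d\prob$) and upper semicontinuous (using continuity of $a \mapsto \regret(\sigma,a)$, which follows from the PDE Assumption), so $V$ attains its maximum at some $d\prob_* \in \mathcal{P}$. Let $\sigma_* := \sigma_\bayes(d\prob_*)$ and $\bar R := V(d\prob_*) = F(\sigma_*, d\prob_*)$.

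The crux is to show that $d\prob_*$ is supported on $E := \{a \in [-A,A]: \regret(\sigma_*, a) = \sup_{a'} \regret(\sigma_*, a')\}$. Fix any $a^* \in [-A,A]$, set $d\prob_\varepsilon := (1-\varepsilon)d\prob_* + \varepsilon \delta_{a^*}$, and write $\sigma_\varepsilon := \sigma_\bayes(d\prob_\varepsilon)$. The maximality of $d\prob_*$ together with optimality of $\sigma_\varepsilon$ for $d\prob_\varepsilon$ yields
\begin{equation*}
(1-\varepsilon) F(\sigma_\varepsilon, d\prob_*) + \varepsilon \regret(\sigma_\varepsilon, a^*) = V(d\prob_\varepsilon) \le V(d\prob_*),
\end{equation*}
while $F(\sigma_\varepsilon, d\prob_*) \ge V(d\prob_*)$ by definition of $V$. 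Combining these gives $F(\sigma_\varepsilon, d\prob_*) \to V(d\prob_*)$ and $\regret(\sigma_\varepsilon, a^*) \le \bar R$ as $\varepsilon \downarrow 0$. The first, together with the stability assertion in \cite{boundeda} that strategies differing significantly from $\sigma_\bayes(d\prob_*)$ incur significantly higher expected cost under $d\prob_*$, forces $\sigma_\varepsilon \to \sigma_*$ in a sense strong enough to give $\regret(\sigma_\varepsilon, a^*) \to \regret(\sigma_*, a^*)$. Passing to the limit yields $\regret(\sigma_*, a^*) \le \bar R$ for every $a^*$, hence $\sup_a \regret(\sigma_*, a) = \bar R = \int \regret(\sigma_*, a)\, d\prob_*(a)$, forcing $\mathrm{supp}(d\prob_*) \subset E$. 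This is (III) and the support part of (II).

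The minimax property (I) then follows by weak duality: for any strategy $\sigma$,
\begin{equation*}
\sup_a \regret(\sigma, a) \ge F(\sigma, d\prob_*) \ge V(d\prob_*) = \sup_a \regret(\sigma_*, a).
\end{equation*}
The finiteness of $E$ asserted in (II) I would obtain separately, from analytic dependence of $a \mapsto \cJ(\sigma_*, a; T, q_0)$ on $a$ under the PDE Assumption together with a non-degeneracy argument excluding constancy of $\regret(\sigma_*, \cdot)$ on an entire subinterval. The main obstacle will be converting the qualitative stability result of \cite{boundeda} into continuity of $d\prob \mapsto \sigma_\bayes(d\prob)$ quantitative enough to pass to the limit $\varepsilon \downarrow 0$; the rest is standard compact-convex saddle-point machinery.
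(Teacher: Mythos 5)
The paper does not prove Theorem~\ref{thm: intro 1} in this document; immediately before the statement the authors write that it is ``proved in \cite{boundeda}'', the companion paper. There is therefore no ``paper's own proof'' here to compare against, so what follows is an assessment of your proposal on its own merits, guided by what the present paper does say about the theorem.

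Your overall architecture --- a compact-convex saddle-point/minimax argument over $\mathcal{P}([-A,A])$, with the reweighting trick to reduce expected regret to expected cost, and a perturbation $d\prob_\varepsilon = (1-\varepsilon)d\prob_* + \varepsilon\delta_{a^*}$ to extract the equalizer property --- is the standard Wald-type decision-theoretic route and the inequality chain you write down is correct. Note also that the paper's own remark (``It is easy to see that a probability distribution $d\prob_*$ satisfying (II) and (III) gives rise to a Bayesian strategy satisfying (I). The hard part of Theorem~\ref{thm: intro 1} is the assertion that such a probability measure exists.'') matches your structure: your weak-duality step (I) is the easy half, and the perturbation/stability step producing $d\prob_*$ is where the difficulty lives.

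That said, there are two genuine gaps, and you have correctly identified the first but underestimated the second. First, passing from $F(\sigma_\varepsilon, d\prob_*) \to V(d\prob_*)$ and $\regret(\sigma_\varepsilon, a^*) \le \bar R$ to $\regret(\sigma_*, a^*) \le \bar R$ requires that $\sigma_\varepsilon \to \sigma_*$ in a topology under which $\sigma \mapsto \regret(\sigma, a^*)$ is continuous, uniformly enough to survive the limit. The stability statement from \cite{boundeda} as summarized in this paper (``any strategy significantly different from $\sigma_{\bayes}(d\prob)$ incurs a significantly higher expected cost'') is qualitative, gives a modulus with respect to expected cost under $d\prob_*$ (not pointwise cost at the single point $a^*$), and says nothing about continuity of the pointwise functional $\regret(\cdot, a^*)$. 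You would need a quantitative strong-convexity-type estimate plus a separate regularity estimate to close this; as written it is a substantive gap, not a polish-later detail. Second, the finiteness of $E$ is not a corollary of the PDE Assumption as stated. The PDE Assumption asserts smoothness of the Bellman solution, not real-analyticity. A merely $C^\infty$ function $a \mapsto \regret(\sigma_*, a)$ can attain its maximum on an infinite (even perfect, measure-zero) set, so the identity-theorem argument you gesture at does not start. Even if one upgrades to analyticity in $a$ (which requires a separate argument about analytic dependence of the solution on the parameter), one must also rule out that the regret function is identically equal to its maximum on $[-A,A]$; that non-degeneracy claim needs its own proof. So (II) is essentially unaddressed.

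Finally, a small but worth-flagging bookkeeping point: when you reweight by $\cJ_0(a)$ (or $\cJ_0(a)+\gamma$) and redefine $\sigma_\bayes(d\prob)$ to mean the Bayes strategy for the reweighted prior, the measure that makes the theorem's statement literally true is the reweighted one, not the maximizer of $V$. Since the weight is bounded above and below on $[-A,A]$, the two measures have the same support, so (II) and (III) transfer --- but you should say this explicitly, because otherwise the reader will wonder whether the ``$d\prob_*$'' of (I) and the ``$d\prob_*$'' whose support you bound in (III) are the same object.
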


So, for optimal agnostic control, we should pretend to believe that the unknown $a$ is confined to a finite set $E$ and governed by the probability distribution $d\prob_*$, even though in fact we know nothing about $a$ except that it lies in $[-A,A]$.

It is easy to see that a probability distribution $d\prob_*$ satisfying (II) and (III) gives rise to a Bayesian  strategy satisfying (I). The hard part of Theorem \ref{thm: intro 1} is the assertion that such a probability measure exists. 

Theorem \ref{thm: intro 1} lets us search for optimal agnostic strategies: We first guess a finite set $E$ and a probability measure $d\prob$ concentrated on $E$. By solving the Bellman equation, we produce the strategy $\sigma= \sigma_{\bayes}(d\prob)$, which allows us to compute the function $[-A,A] \ni a \mapsto \regret(\sigma,a)$. If the maxima of that function occur precisely at the points of $E$, then $\sigma$ is the desired optimal agnostic strategy. We have carried this out numerically for several $[-A,A], q_0, T$.

This concludes our discussion of agnostic control for bounded $a$. Finally, we pass to the most general case.

\subsection*{Variant IV: Fully Agnostic Control}

We make no assumption whatever regarding the unknown $a$; our $a$ may be any real number, and we are not given a Bayesian prior distribution for it.

Our goal is again to find a strategy that minimizes the \emph{worst-case regret}, defined as in \eqref{eq: intro wcr}, except that the $\sup$ is now taken over all $a \in \R$.

The main result of this paper is that, with negligible increase in regret, we can reduce matters to agnostic control for bounded $a$. More precisely, we will prove the following result.

\begin{thm}\label{thm: intro 2}
    Fix a time horizon $T$ and a nonzero starting position $q_0$, as well as constants $C_0, m_0$  (for which, see \eqref{eq: intro 6.5}). Then given $\varepsilon>0$ there exists $A>0$ for which the following holds.
    
    Let $\sigma$ be a strategy for the starting position $q_0$ and time horizon $T+\varepsilon$. Suppose $\sigma$ satisfies estimate \eqref{eq: intro 6.5} for the given $C_0, m_0,A$. 
    
    Then there exists a strategy $\sigma_*$ for the starting position $q_0$ and time horizon $T$, satisfying the following estimates.
    \begin{itemize}
        \item[\emph{(A)}] For $a \in [-A,A]$ we have
        \[
        \cJ(\sigma_*,a;T,q_0) \le \varepsilon + (1+\varepsilon)\sup\{\cJ(\sigma,a';T+\varepsilon,q_0) : |a'- a| \le \varepsilon |a|\}.
        \]
        \item[\emph{(B)}] For $a \notin [-A,A]$ we have
        \[
        \cJ(\sigma_*,a;T,q_0) \le \varepsilon + (1+\varepsilon) \cJ(\sigma_{\emph{opt}}(a),a;T,q_0).
        \]
    \end{itemize}
\end{thm}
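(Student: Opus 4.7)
The plan is to construct $\sigma_*$ by concatenating a brief observation phase with a branching continuation: if a least-squares estimator of $a$ lies well inside $[-A,A]$ we continue to run $\sigma$ for the remaining horizon, while if the estimator declares $|a|$ large we switch to the known-$a$ optimal control $\sigma_{\opt}(\hat a)$. Intuitively, the slack parameter $\varepsilon$ in the horizon difference $(T+\varepsilon)-T$ pays both for the detection time and for the rare probability of a mis-decision.

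First I fix a short detection time $\tau\ll\varepsilon$ and a margin $\Delta$. During $t\in[0,\tau]$ I run $\sigma_*$ by copying $\sigma$'s initial control, so $u_*(t)=u_\sigma(t)$ and the observed path is identical in law to $\sigma$'s. At time $\tau$, form
\[
\hat a \;=\; \frac{\int_0^\tau q(s)\,\big(dq(s) - u_*(s)\,ds\big)}{\int_0^\tau q(s)^2\, ds} \;=\; a \;+\; \frac{\int_0^\tau q(s)\,dW(s)}{\int_0^\tau q(s)^2\, ds},
\]
with a fallback when the denominator is too small. Based on $\hat a$, $q(\tau)$, and $\int_0^\tau q^2\, ds$, I either (i) continue with $\sigma$ on $[\tau,T]$ (well defined because $\sigma$ extends to $[0,T+\varepsilon]$) or (ii) restart from state $q(\tau)$ and use $\sigma_{\opt}(\hat a)$ for the remaining horizon $T-\tau$.

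To prove the estimates, I partition the probability space by the true $a$ and the branch taken. For $a\in[-A,A]$, concentration of $\int q\,dW$ yields $|\hat a - a|\le\varepsilon|a|$ outside a rare event; on the good event the ``continue'' branch incurs cost $\cJ(\sigma,a;T,q_0)+O(\tau)$, which is at most $\varepsilon+(1+\varepsilon)\sup_{|a'-a|\le\varepsilon|a|}\cJ(\sigma,a';T+\varepsilon,q_0)$ as claimed in (A), while the ``switch'' branch is bounded through the a priori control bound \eqref{eq: intro 6.5}. For $a\notin[-A,A]$, the signal $aq$ dominates noise almost immediately (because $|a|$ is large), so the estimator is accurate and we correctly switch; Lipschitz regularity of $\sigma_{\opt}(\cdot)$ together with the fact that $\cJ(\sigma_{\opt}(a),a;T,q_0)$ grows with $|a|$ then delivers (B).

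The principal obstacle is the very large $|a|$ regime, in which $q(t)$ explodes like $q_0 e^{at}$ and the detection must succeed before the cost becomes uncontrollable. This forces a delicate joint choice of $\tau,\Delta,A$ (in that order), each parameter tuned to dominate the error contributions of the previous, with $A$ eventually taken very large in $\varepsilon,C_0,m_0,T,q_0$ so that for $|a|$ near or above $A$ the mis-switching probability is smaller than $\varepsilon\cJ(\sigma_{\opt}(a),a;T,q_0)$ and all failure events can be absorbed into the multiplicative $(1+\varepsilon)$ and additive $\varepsilon$ slacks of (A) and (B) rather than swallowed by a crude union bound.
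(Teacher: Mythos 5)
Your plan has a structural gap that cannot be repaired by tuning $\tau,\Delta,A$: the fixed-duration observation window already destroys the estimate (B) before the estimator ever gets a chance to help.

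Consider $a\gg A$ (which is exactly the regime (B) must cover; $a$ is unbounded). Because $\sigma$ is $A$-bounded, on $[0,\tau]$ its control is overwhelmed by the drift and $q(t)$ grows like $q_0 e^{at}$. The expected observation cost is therefore already of order $q_0^2 A^{2m_0} e^{2a\tau}/a$, and the state at the handoff satisfies $\E[q(\tau)^2]\sim q_0^2 e^{2a\tau}$, so even a \emph{perfect} switch to $\sigma_{\opt}(a)$ at time $\tau$ costs about $2a\,q(\tau)^2 \sim 2a\,q_0^2 e^{2a\tau}$. Meanwhile $\cJ_0(a;T,q_0)\approx 2a(q_0^2+T)$ grows only linearly in $a$ (Lemma~\ref{lem: OS asymp}). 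Since $\tau$ is chosen depending on $\varepsilon$ (and possibly $A$, $T$, $q_0$, $C_0$, $m_0$) but \emph{not} on the unknown $a$, the ratio $e^{2a\tau}$ blows up as $a\to\infty$, so no choice of parameters makes (B) hold. This is not a tail-probability issue that the slack $\varepsilon$ can absorb; it is a deterministic overspend by a factor $e^{2a\tau}$ that afflicts the ``good'' event just as much as the rare ones. The paper avoids this by making the observation phase end at a hitting time rather than a fixed time: $u\equiv 0$ only until $q$ first exits the small interval $((1-\varepsilon_0)q_0,(1+\varepsilon_0)q_0)$ (the Prologue, Section~\ref{sec: def aos}, and the Testing Epoch of Section~\ref{sec: las}). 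For large $a$ that exit occurs in time $\sim 1/a$, the trajectory stays bounded by $(1+\varepsilon_0)q_0$ throughout, and the observation cost is $O(q_0^2/a)$ --- negligible. The hitting-time estimator $\bar a=\log(1+\varepsilon_0)/\tau_+$ is the natural companion to this adaptive stopping.

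A second missing ingredient is a disaster-mitigation mechanism for the post-estimation phase. Once $|a|>A$ you propose to play $\sigma_{\opt}(\hat a)$ for the rest of the horizon, but if $\hat a$ underestimates $a$ by a constant factor the resulting simple-feedback gain fails to stabilize and the conditional cost is of order $e^{2aT}$ (Corollary~\ref{cor: opt alph score}). The probability of that underestimate is $\exp(-c\,q_0^2 a)$ or $\exp(-c\,q_0^2 a/\tau)$ depending on the estimator, and $q_0,T$ are fixed in advance of $\varepsilon$, so the product $\exp(2aT)\cdot\exp(-c\,q_0^2 a)$ need not be small. The paper resolves this by never letting the strategy run open-loop after a bad estimate: as soon as $|q|$ reaches a pre-set threshold $q_0^*$ it switches to a bounded-regret strategy (the Disaster Mitigation Epoch of $\las$ in Section~\ref{sec: las}, and the modification $\sigma\mapsto\tilde\sigma$ in Section~\ref{sec: mbas}). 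This caps the conditional cost by a polynomial in $A$ instead of an exponential in $a$, and only then do the tail-probability bounds close the argument. Your proposal has no such cap, so the union bound you gesture at in the last paragraph does not close.

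In short: replace the fixed $\tau$ by the first-exit time from a shrinking interval around $q_0$, and protect every continuation branch (both ``continue $\sigma$'' and ``run $\sigma_{\opt}(\hat a)$'') by a further stopping rule that triggers a bounded-regret fallback when $|q|$ gets large. Those two devices are the content of the paper's Theorems~\ref{thm: lqs} and~\ref{thm: las}, and without them the argument does not go through.
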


So, if $a \in [-A,A]$, then $\sigma_*$ performs almost as well as $\sigma$; and if $a \notin [-A,A]$, then $\sigma_*$ performs almost as well as the optimal known-$a$ strategy $\sigma_\opt(a)$.

To apply Theorem \ref{thm: intro 2}, we take $\sigma$ to be an optimal agnostic strategy for the starting position $q_0$ and the time horizon $T+\varepsilon$, assuming $a$ to be confined to the interval $[-(1+\varepsilon)A, +(1+\varepsilon)A]$.

If our \emph{PDE Assumption} holds, then $\sigma$ satisfies the hypothesis of Theorem \ref{thm: intro 2}. It's easy to deduce from Theorem \ref{thm: intro 2} that the worst-case hybrid regret of the strategy $\sigma_*$ (for fully agnostic control) is at most $O(\varepsilon)$ percent worse than that of $\sigma$ (for agnostic control with $a$ confined to $[-(1+\varepsilon)A, (1+\varepsilon) A]$).

For any $\gamma>0$ and any strategy $\sigma$ for time horizon $T$ and starting position $q_0$ we let $\hr_\gamma^*(\sigma;T)$ denote the worst-case hybrid regret (over all $a \in \R$) of $\sigma$. The above discussion then implies the following.

\begin{cor}\label{cor: hr}
    Fix constants $T>0$, $\eta>0$, $q_0\ne 0$. Assume the \emph{PDE Assumption}. Then for any $\varepsilon>0$ we can construct a strategy $\sigma_{\emph{Ag}}$ for time horizon $T$ and starting position $q_0$ satisfying
    \[
    \ehr_\gamma^*(\sigma_{\emph{Ag}};T) \le (1+C\varepsilon)\cdot \ehr_\gamma^*(\sigma;T+\varepsilon)
    \]
    for any strategy $\sigma$ for time horizon $T+\varepsilon$ and starting position $q_0$.
\end{cor}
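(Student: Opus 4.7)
The plan is to construct $\sigma_{\text{Ag}}$ by applying Theorem~\ref{thm: intro 2} to a near-optimal bounded-$a$ agnostic strategy, and then compare against an arbitrary $\sigma$ via optimality of the latter. Given $\varepsilon > 0$, first choose $A > 0$ from Theorem~\ref{thm: intro 2} (using the constants $C_0, m_0$ supplied by the PDE Assumption, with a harmless rescaling to accommodate $A$ versus $(1+\varepsilon)A$). By Theorem~\ref{thm: intro 1} combined with the PDE Assumption, there is a Bayesian strategy $\bar\sigma$ for horizon $T+\varepsilon$ that minimizes worst-case hybrid regret over $a\in[-(1+\varepsilon)A,(1+\varepsilon)A]$ and satisfies \eqref{eq: intro 6.5}. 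Applying Theorem~\ref{thm: intro 2} to $\bar\sigma$ produces $\sigma_{\text{Ag}}:=\sigma_*$ for horizon $T$; crucially, this construction depends only on $T, q_0, \gamma, \varepsilon$, not on the $\sigma$ in the corollary.

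To bound $\hr_\gamma^*(\sigma_{\text{Ag}};T)$, I would split the worst case over $a\in\R$ into $a\in[-A,A]$ and $a\notin[-A,A]$. In the first case, Theorem~\ref{thm: intro 2}(A) bounds $\cJ(\sigma_{\text{Ag}},a;T,q_0)$ by $\varepsilon+(1+\varepsilon)\cJ(\bar\sigma,a';T+\varepsilon,q_0)$ for some $a'$ with $|a'-a|\le\varepsilon|a|$, hence $a'\in[-(1+\varepsilon)A,(1+\varepsilon)A]$. The optimality of $\bar\sigma$ on that interval gives $\hr_\gamma(\bar\sigma,a';T+\varepsilon)\le \hr_\gamma^*(\sigma;T+\varepsilon)$ for any competing $\sigma$, so
\[
\cJ(\bar\sigma,a';T+\varepsilon,q_0)\le \hr_\gamma^*(\sigma;T+\varepsilon)\cdot\bigl[V(T+\varepsilon,a')+\gamma\bigr],
\]
where $V(T,a):=\cJ(\sigma_\opt(a),a;T,q_0)$. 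The explicit Riccati representation of $V$ yields a continuity estimate $V(T+\varepsilon,a')\le (1+O(\varepsilon))V(T,a)+O(\varepsilon)$ for $|a'-a|\le\varepsilon|a|$. Dividing by $V(T,a)+\gamma$ and using $V(T,a)+\gamma\ge\gamma$ to absorb the additive term gives $\hr_\gamma(\sigma_{\text{Ag}},a;T)\le (1+C\varepsilon)\hr_\gamma^*(\sigma;T+\varepsilon)$.

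In the second case, Theorem~\ref{thm: intro 2}(B) directly gives $\hr_\gamma(\sigma_{\text{Ag}},a;T)\le (\varepsilon+(1+\varepsilon)V(T,a))/(V(T,a)+\gamma)\le 1+\varepsilon+\varepsilon/\gamma$, a constant. Independently, for any strategy $\sigma$ the universal pointwise inequality $\cJ(\sigma,a;T+\varepsilon,q_0)\ge V(T+\varepsilon,a)$ combined with $V(T+\varepsilon,a)\to\infty$ as $|a|\to\infty$ yields the lower bound $\hr_\gamma^*(\sigma;T+\varepsilon)\ge \sup_a V(T+\varepsilon,a)/(V(T+\varepsilon,a)+\gamma)=1$. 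Hence $1+\varepsilon+\varepsilon/\gamma \le (1+(1+1/\gamma)\varepsilon)\hr_\gamma^*(\sigma;T+\varepsilon)$, and combining with the first case completes the proof.

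The main expected obstacle is the quantitative continuity of $V$ uniformly over the (potentially large) interval $[-A,A]$: one must show that $(V(T+\varepsilon,a')-V(T,a))/(V(T,a)+\gamma)=O(\varepsilon)$ uniformly in $a$. The key point is that $|a|/V(T,a)$ is bounded on $\R$, which follows from $V(T,0)>0$ for small $|a|$ and from the Riccati asymptotic $V(T,a)\sim 2|a|(q_0^2+T)$ for large $|a|$, together with smoothness of $V$ in $a$ and $T$. The remaining bookkeeping of constants is routine, and the final $C$ depends on $T,q_0,\gamma$ but not on $\varepsilon$.
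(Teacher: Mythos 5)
Your proof is correct and follows the same route the paper sketches (the paper states Corollary~\ref{cor: hr} as a consequence of the preceding discussion without giving details): take $\bar\sigma$ to be the optimal Bayesian/agnostic strategy for $a$ confined to $[-(1+\varepsilon)A,(1+\varepsilon)A]$ from Theorem~\ref{thm: intro 1}, feed it into Theorem~\ref{thm: intro 2}, and compare hybrid regrets. Your careful handling of the two ingredients the paper leaves implicit — the uniform-in-$a$ relative continuity of $\cJ_0(\cdot;T,q_0)$ (which is exactly part (3) of Lemma~\ref{lem: OS asymp}) and the lower bound $\ehr_\gamma^*(\sigma;T+\varepsilon)\ge 1$ used to absorb the additive $\varepsilon$ terms — is sound, as is the minor adjustment of $C_0$ to account for $A$ versus $(1+\varepsilon)A$.
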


There are variants of Theorem \ref{thm: intro 2} and Corollary \ref{cor: hr} for the case of starting position $q_0=0$; see Corollary \ref{cor: q0}.

\subsection*{Recap}

Let us summarize what has been achieved.

Our goal is to minimize worst-case regret in the setting where $a$ may be any real number. Modulo an arbitrarily small increase in regret, we may reduce matters to the case in which $a$ is confined to a bounded interval $[-A,A]$. We then look for a probability measure $d\prob_*$ living on a finite subset $E \subset [-A,A]$ such that the regret of the optimal Bayesian strategy for $d\prob_*$ is maximized precisely on $E$. We can calculate the optimal Bayesian strategy for a given prior probability measure by solving a Bellman equation. However, our results are conditional; we have to make an assumption on the existence, size, and smoothness of solutions to the Bellman equation. In numerical simulations, we have produced evidence for our \emph{PDE Assumption}, and we have produced optimal agnostic strategies for cases in which the unknown $a$ is confined to a bounded interval.

\subsection{Ideas from the Proof of Theorem \ref{thm: intro 2}}
Recall that in Theorem \ref{thm: intro 2} we are given a strategy $\sigma$, a small positive $\varepsilon$, and a large positive $A$ depending on $\varepsilon$. The strategy $\sigma$ applies to a starting position $q_0 \ne 0$, and a time horizon $T + \varepsilon$. Our task is to find a strategy $\sigma_*$ satisfying conditions (A) and (B).

In this introduction, we allow ourselves some inaccuracy in the interest of simplicity. See Sections \ref{sec: almost optimal strategy}--\ref{sec: las} for correct details.

A recurring theme in the proof of Theorem \ref{thm: intro 2} is that because the unknown $a$ may be arbitrarily large and positive, the system may be arbitrarily unstable. Consequently, disasters of exponentially small probability may lead to exponentially large expected cost.

To prepare the way for the proof of Theorem \ref{thm: intro 2}, we first examine two regimes in which we can do almost as well as if we knew the value of $a$.

\subsection*{The Large $q$ Regime}

Suppose $q_0$ is very large. A glance at \eqref{eq: intro 1} suggests that the noise $dW(t)$ has only a small effect compared to that of the term $(aq+u)dt$. Therefore, after initially setting $u \equiv 0$ and observing $q(t)$ for small $t$, we quickly arrive at a guess $\bar{a}$ for the unknown $a$. That guess is probably highly accurate. Moreover, the larger $a$ is, the sooner we can arrive at the guess $\bar{a}$.

Once we have found $\bar{a}$, we can simply play the known-a strategy $\sigma_\opt(\bar{a})$ until the end of the game at time $T$. (If $\bar{a}$ is large positive, then in place of $\sigma_\opt(\bar{a})$, we use the strategy in which $u(t) = - 2 \bar{a}q(t)$, which is equivalent to $\sigma_\opt(\bar{a})$ asymptotically for $\bar{a} \gg 1$.)

This ``Large $q$ strategy'' incurs an expected cost almost as small as that of an opponent who knows $a$. 

A crucial point is that for large positive $a$, the probability of a significantly inaccurate guess $\bar{a}$ is $O(\exp(-cq_0^2 a))$, while the expected cost if such an error occurs is $O(q_0^2\exp(CTa))$. Hence, for large $q_0$, the exponentially tiny probability of disaster overwhelms the exponentially large resulting cost.

\subsection*{The Large $a$ Regime}
As in our discussion of the Large $q$ strategy, a glance at \eqref{eq: intro 1} suggests that the nosie $dW(t)$ will have negligibly small effect compared to that of the term $(aq+u)dt$, provided $a$ is large positive, say, $a \ge A$. This leads to a ``na\"{i}ve large $a$ strategy'' in which we initially set $u \equiv 0$, observe $q(t)$ for small $t$, arrive at a guess $\bar{a}$ for the unknown $a$, and then play the known-a strategy $\sigma_\opt(\bar{a})$ until the game ends at time $T$.

This time, however, the exponentially small probability of an error of the form $\bar{a} \ll a$ is dominated by the exponentially large cost of the ensuing disaster. Consequently, the na\"{i}ve large $a$ strategy fails. The cure is to pick some $q_0^*$, big but not too big, and execute the na\"{i}ve large $a$ strategy only until the first moment we encounter $|q(t)| = q_0^*$. At that moment, we switch over to the Large $q$ strategy. If we never encounter $|q(t)| = q_0^*$, then we continue with the na\"{i}ve large $a$ strategy until the end of the game.

The above modification limits the damage arising from the event $\bar{a} \ll a$. We have thus produced a ``Large $a$ strategy'' whose expected cost is close to that of the optimal known-a strategy $\sigma_\opt(a)$ whenever $a \ge A$. When $a < A$, the expected cost of our Large $a$ strategy is $O(A^2)$. That's bigger than we'd like, but it isn't exponentially large.

\subsection*{Strengthening the Given Strategy}

Next, we consider the strategy $\sigma$ given in the statement of Theorem \ref{thm: intro 2}. Recall that $\sigma$ is assumed to satisfy the estimate \eqref{eq: intro 6.5}. From that estimate we see at once that $\sigma$ disastrously undercontrols in case $a \gg A^{m_0}$, leading to exponentially large expected cost. We can remedy this defect, by modifying $\sigma$ the same way we modified the na\"{i}ve large $a$ strategy. We pick a $q_0^*$, large but not too large, and switch over from $\sigma$ to the Large $q$ strategy as soon as we encounter $|q(t)| = q_0^*$. Thus, we obtain a strategy $\tilde{\sigma}$ that performs almost as well as $\sigma$ for $ a\in [-A,A]$, and avoids exponentially large expected cost if $a > A$.

Like $\sigma$, the strategy $\tilde{\sigma}$ starts at position $q_0$. By rescaling $\tilde{\sigma}$ slightly, we arrive at a strategy $\bar{\sigma}$ with starting position $(1+\varepsilon)q_0$. Like $\tilde{\sigma}$, the strategy $\bar{\sigma}$ performs almost as well as $\sigma$ when $a \in [-A,A]$, and avoids exponentially large expected cost when $a > A$.

Armed with the Large $q$ and Large $a$ strategies, and the modified strategy $\bar{\sigma}$, we can now describe the strategy $\sigma_*$ whose existence is asserted by Theorem \ref{thm: intro 2}. Without loss of generality, we suppose that our nonzero starting position $q_0$ is positive.

In the strategy $\sigma_*$ there are two epochs, a Prologue and a Main Act. During the Prologue, we set $u \equiv 0$. The Prologue ends as soon as we encounter one of the three events
\begin{enumerate}
    \item[(a)] $q(t) = (1+\varepsilon)q_0$
    \item[(b)] $q(t) = - q_{\rare}$ for a suitable $q_\rare > 0$ (big but not too big)
    \item[(c)] the end of the game at time $T$ (in which case there is no Main Act).
\end{enumerate}

Event (b) occurs with small probability, regardless of the unknown $a$. If it does occur, then during the Main Act we play a slight variant of the Large $q$ strategy to bound our losses.

If instead we enter the Main Act via case (a), then by observing how long it took to pass from the initial position $q_0$ to the position $(1+\varepsilon)q_0$, we obtain a guess $\bar{a}$ for the unknown $a$. If $a$ is large positive, then as in our discussion of the Large $a$ strategy, our guess $\bar{a}$ is probably highly accurate. Otherwise, $\bar{a}$ is likely not so close to $a$, but at least $\bar{a}$ probably won't be large positive. Therefore, for large $A$, our guess $\bar{a}$ will at least tell us whether $a \gtrsim A$ or $a \ll A$. Accordingly, in case (a) we proceed as follows during the Main Act.
\begin{itemize}
    \item If $\bar{a} > A$, then during the Main Act we execute the Large $a$ strategy.
    \item If $\bar{a} \le A$, then during the Main Act we execute $\bar{\sigma}$, the improved version of the given strategy $\sigma$. 
\end{itemize}
The Main Act lasts until the end of the game at time $T$. 

This completes our description of the strategy $\sigma_*$. We hope the reader finds it plausible that our $\sigma_*$ satisfies conditions (A) and (B) in Theorem \ref{thm: intro 2}.

We again warn the reader that our discussion in this introduction is somewhat oversimplified. For instance, our basic stochastic ODE \eqref{eq: intro 1} isn't obviously well-defined if our strategy allows $u(t)$ to be a discontinuous function of $t$. We haven't even given a rigorous definition of a strategy. Our rigorous discussion starts from scratch in Section \ref{sec: setup}.

\subsection*{Survey of Prior Literature}

Literature that considers adaptive control of a simple linear system similar to the one considered in this paper commonly consists of one or more of the following features: (\emph{i}) unknown governing dynamics, (\emph{ii}) unknown cost function and (\emph{iii}) adversarial noise.  Examples of such work include \cite{kumar2022online,mania2019certainty,wagenmaker2020active,furieri2020learning,Cohen:2019,malik2019derivative,duchi2011adaptive} as well as our own prior work \cite{fefferman2021optimal}, \cite{carruth2022controlling}.

Initial work in obtaining regret bounds in the infinite time horizon for the related LQR (linear-quadratic regulator) problem was undertaken in \cite{abbasi2011regret}, which proved that under certain assumptions, the expected additive regret of the adaptive controller is bounded by $\tilde{O} (\sqrt{T})$. Further progress was made on this problem in \cite{chen2021black}. Assuming controllability of the system, the authors gave the first efficient algorithm capable of attaining sublinear additive regret in a single trajectory in the setting of online nonstochastic control. See also the related \cite{minasyan2021online}, which obtained sublinear adaptive regret bounds, a stronger metric than standard regret and more suitable for time-varying systems. Additional adaptive control approaches include \cite{dean2018regret,dean2019safely} using the system level synthesis. This expands on ideas in \cite{simchowitz2018learning}, which showed that the ordinary least-squares estimator learns a linear system nearly optimally in one shot. Other work uses Thompson Sampling \cite{abeille2017thompson,kargin2022thompson} or deep learning \cite{chen2023regret}.  Perhaps most related to the work performed in this study is \cite{jedra2022minimal}, which designed an online learning algorithm with sublinear expected regret that moves away from episodic estimates of the state dynamics (meaning that no boundedness or initially stabilizing control needed to be assumed).

In \cite{fefferman2021optimal}, the third and fourth authors of the present paper, along with B. Guill\'{e}n Pegueroles and M. Weber, found regret minimizing strategies for a problem with simple unknown dynamics (a particle moving in one-dimension at a constant, unknown velocity subject to Brownian motion). In \cite{gurevich2022optimal}, along with D. Goswami and D. Gurevich, they generalized these results to an analogous, higher-dimensional system with the addition of sensor noise. In \cite{fefferman2021optimal}, they also posed the problem of finding regret minimizing strategies for the more complicated dynamics \eqref{eq: intro 1}. In \cite{carruth2022controlling}, the authors of the present paper, along with M. Weber, took the first steps toward resolving this problem. Specifically, we exhibited a strategy for the dynamics \eqref{eq: intro 1} with bounded multiplicative regret.

Historically, significant work has been undertaken in the closely related ``multi-armed bandit'' problem; see, for instance, the classic papers \cite{robbins1952some,vermorel2005multi}. Recent work considering this paradigm includes \cite{wei2021non}, which used reinforcement learning to obtain dynamic regret whose order of magnitude is optimal, and \cite{faury2021regret}, which studied the more general Generalized Linear Bandits (GLBs) and obtains similar regret bounds.

We finally want to point out the parallel field of adversarial control, where the noise profile is chosen by an adversary instead of randomly. This includes \cite{martin2022safe}, which attained minimum dynamic regret and guaranteed compliance with hard safety constraints in the face of uncertain disturbance realizations using the system level synthesis framework, and \cite{goel2021competitive}, which studied the problem of competitive control.

As this list of references is by no means exhaustive and does not do justice to the wealth of studies in the literature, we point the reader to the book \cite{hazancontrol} and the references therein for a more thorough overview of online control.

We emphasize that our approach in \cite{fefferman2021optimal}, \cite{carruth2022controlling}, and the present paper differs from the other work cited above in that:
\begin{itemize}
    \item We seek strategies that minimize the worst-case regret for a fixed time horizon $T$, whereas the literature is mainly concerned with $T \rightarrow \infty$.
    \item \sloppy Typically in the literature one assumes either that the dynamics are bounded or that one is given a stabilizing control. We make no such assumptions and so we must control a system that is arbitrarily unstable.
    \item However, we achieve the above ambitious goals only for a simple model system.
\end{itemize}

We thank Amir Ali Ahmadi, Brittany Hamfeldt, Elad Hazan, Robert Kohn, Sam Otto, Allan Sly, and Melanie Weber for helpful conversations. We are grateful to the Air Force Office of Scientific Research, and Frederick Leve, for their generous support. The second named author thanks the Joachim Herz foundation for support.

\section{Setup}\label{sec: setup}

Except for where we explicitly state otherwise, we adopt the following conventions regarding constants throughout this paper. We write $C, C', C'', \dots$ to denote positive absolute constants. When we wish to specify that a positive absolute constant is smaller than 1, we write $c, c', c'', \dots$. If a constant depends on some quantity $X$, then we write $C_X, C'_X, c_X, c'_X, \dots$. The values of these constants may change from line to line. 

Whenever we say that a parameter/constant depends on other parameters/constants, we assume that the dependence is continuous unless we explicitly state otherwise.

We let $W(t)$ denote Brownian motion starting at $W(0) = 0$ and normalized so that $\E[(W(t))^2] = t$. We write $(\Omega, \cF, \prob)$ for the corresponding probability space and $\E[X]$ for the expected value of a random variable $X$. We write $\omega$ to denote an arbitrary element of $\Omega$. For $t \in [0,T]$, we write $\cF_t$ to denote the sigma algebra determined by the history of the Brownian motion from time 0 until time $t$.

We introduce a \emph{time horizon} $T>0$ and a \emph{starting position} $q_0 \ne 0$. We define a \emph{strategy} (for time horizon $T$ and starting position $q_0$) to be a collection of random variables $q(t,a)$, $u(t,a)$ defined for all $t \in [0,T]$ and $a\in \R$ and satisfying the following:
\begin{enumerate}[label=(S.\arabic*)]
\item For every $a \in \R$, $q(t,a)$ is a continuous function of $t$ with probability 1 and $u(t,a)$ is an $L^2$ function of $t$ with probability 1.
\item For every $a \in \R$ and $t \in [0,T]$ the maps $(s,\omega) \mapsto q(s,a,\omega)$ and $(s,\omega) \mapsto u(s,a,\omega)$, defined on $[0,t]\times \Omega$, are measurable as functions on $[0,t]\times (\Omega, \cF_t, \prob)$. Intuitively, this means that $q,u$ are determined by the past. \label{s4}
 \item For every $a \in \R$,
    \[
    \E\bigg[ \int_0^T ( q(t,a))^2 + (u(t,a))^2\ dt \bigg] < \infty.
    \]
    \item For almost all $\omega \in \Omega$, we have that for all $a,b\in \R$, and for all $t \in [0,T]$, if $q(s,a,\omega) = q(s,b,\omega)$ for all $s \in [0,t]$ then $u(s,a,\omega) = u(s,b,\omega)$ for almost all $s \in [0,t]$. This tells us that $u$ does not depend on the unknown $a$.
    \item For every $a \in \R$ and $t \in [0,T]$, we have
    \[
    q(t,a) = q_0 + W(t) + \int_0^t [aq(\tau,a) + u(\tau,a)]\ d \tau
    \]
    with probability 1.\label{s2}
\end{enumerate}

For a given $a \in \R$ we refer to $q(t,a)$ and $u(t,a)$, respectively, as the \emph{particle trajectory} and the \emph{control variable} for $a$ at time $t$.

We remark that any strategy for time horizon $T$ and starting position $q_0$ gives rise to a strategy for time horizon $T'$ and starting position $q_0$ for any $T' \in (0, T)$ (simply by restricting the time domain).

We will use $\sigma$ to denote an arbitrary strategy. We then write $q^\sigma$ and $u^\sigma$ to denote the families of particle trajectories and control variables associated with $\sigma$.

For any strategy $\sigma$ we define a random variable
\[
\cost(\sigma,a) = \int_0^T\big( (q^\sigma (t,a))^2 + (u^\sigma(t,a))^2\big) \ dt.
\]
This random variable is well-defined because with probability 1 $q(t,a)$ is a continuous function of $t$ and $u(t,a)$ is an $L^2$ function of $t$. We then define the \emph{expected cost} of $\sigma$ by
\[
\cJ(\sigma, a; T, q_0) = \E [ \cost(\sigma,a)].
\]

For any smooth function $v:[0,T] \rightarrow \R$ we define a strategy $\sigma_v$ by setting
\begin{align}
&q^{\sigma_v}(t,a) = q_0 + W(t) + \int_0^t(a - v(\tau)) q^{\sigma_v}(\tau,a)\ d \tau,\label{eq: sfs 1}\\
&u^{\sigma_v}(t,a) = - v(t) q^{\sigma_v}(t,a)\label{eq: sfs 2}.
\end{align}
We refer to $\sigma_v$ as a \emph{simple feedback strategy} with \emph{gain function} $v$.

For $\alpha \in \R$, $s \ge 0$ we define
\[
\kappa(s,\alpha) = \frac{\tanh(s\sqrt{\alpha^2+1})}{\sqrt{\alpha^2+1}-\alpha \tanh(s\sqrt{\alpha^2+1})}.
\]
We let $\sigma_\opt(\alpha)$ denote the simple feedback strategy with gain function $t \mapsto \kappa(T-t, \alpha)$. In Section \ref{sec: known a}, we will show that for any $a \in \R$, the strategy $\sigma_\opt(a)$ minimizes the quantity $\cJ(\sigma,a;T,q_0)$ over all strategies $\sigma$. We therefore refer to the family of strategies $(\sigma_\opt(\alpha))_{\alpha \in \R}$ as \emph{optimal known-$a$} (or just known-$a$) strategies. For ease of notation we define
\[
\cJ_0(a;T,q_0) = \cJ(\sigma_\opt(a),a;T,q_0);
\]
we refer to $\cJ_0(a;T,q_0)$ as the \emph{optimal expected cost for known $a$} (for time horizon $T$ and starting position $q_0$).

Fix a real number $C_0$ and an integer $m_0 \ge 1$. We say that a strategy $\sigma$ (for time horizon $\hat{T}$) is \emph{$A$-bounded} for some $A>0$ if
\[
|u^\sigma(t,a)| \le C_0 A^{m_0} [ |q^\sigma(t,a)|+1]\;\text{for all}\; a\in \R, t \in [0,\hat{T}].
\]
Recall that we assume that our starting position $q_0$ is nonzero.

\begin{thm}\label{thm: main simp}
    Let $\varepsilon>0$. Then for $A>0$ sufficiently large depending on $\varepsilon, T, q_0, C_0, m_0$, the following holds.

Let $\sigma$ be an $A$-bounded strategy for time horizon $T + \varepsilon$ and starting position $q_0$. Then the strategy $\sigma_*$ for time horizon $T$ and starting position $q_0$ specified in Section \ref{sec: def aos} satisfies the following.
\begin{enumerate}
    \item If $a \in [-A,A]$, then
    \[
    \cJ(\sigma_*,a;T,q_0) < \varepsilon + (1+\varepsilon) \cdot \sup\{ \cJ(\sigma,b;T+\varepsilon,q_0): |a-b| <  \varepsilon |a|\}.
    \]
    \item If $|a| > A$, then
    \[
    \cJ(\sigma_*,a;T,q_0) < \varepsilon + (1+\varepsilon)\cdot \cJ_0(a;T,q_0).
    \]
\end{enumerate}
\end{thm}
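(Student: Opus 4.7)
The plan is to realize in detail the two-epoch construction sketched in the introduction: a \Prologue\ in which $u\equiv 0$ until one of three trigger events occurs, followed by a \Main\ tailored to the outcome of the \Prologue. The proof then amounts to estimating, case by case, the probability and the conditional expected cost of each outcome, and verifying that the exponentially small ``bad'' probabilities beat the exponentially large ``bad'' costs.

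First I would pin down the auxiliary ingredients. I fix thresholds $q_{\rare}\gg q_0^{\ast}\gg q_0$ (tuned in terms of $\varepsilon,T,A$) and invoke the Large $q$ strategy and the Large $a$ strategy described in the introduction: each yields a strategy whose expected cost is at most $\varepsilon+(1+\varepsilon)\cJ_0(a;T,q_0)$ in its regime of applicability (large $|q_0|$, respectively $a\ge A$), with the Large $a$ strategy costing only $O(A^2)$ for $a<A$. I also build the ``strengthened'' version $\bar{\sigma}$ of $\sigma$ by switching from $\sigma$ to the Large $q$ strategy the first time $|q|$ exceeds $q_0^{\ast}$ and then rescaling so that $\bar{\sigma}$ starts at $(1+\varepsilon)q_0$ on time horizon $T$; because $\sigma$ is $A$-bounded, the switch costs a factor of $1+O(\varepsilon)$ in expected cost for $a\in[-A,A]$ and tames the potential blow-up for $a\gg A^{m_0}$. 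The strategy $\sigma_*$ is then: run $u\equiv 0$ until the first of (a) $q(t)=(1+\varepsilon)q_0$, (b) $q(t)=-q_{\rare}$, or (c) $t=T$; in case (a) read off the hitting time $\tau$, form the guess $\bar{a}$ (the value of $a$ that would make the deterministic analogue of \eqref{eq: intro 1} hit $(1+\varepsilon)q_0$ at time $\tau$), and play the Large $a$ strategy if $\bar{a}>A$ and (a time-shifted copy of) $\bar{\sigma}$ if $\bar{a}\le A$; in case (b) play a slight variant of the Large $q$ strategy from starting position $-q_{\rare}$; in case (c) there is nothing to do.

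Next I would carry out the stochastic analysis of the \Prologue. With $u\equiv 0$ the process $q(t,a)$ is an explicit Ornstein--Uhlenbeck-type diffusion whose hitting distributions admit standard large-deviation estimates. The quantitative facts I need are:
\begin{enumerate}[label=(\roman*)]
\item $\prob[\text{event (b)}]\le \exp(-cq_{\rare}^{2})$ uniformly in $a\in\R$ (the drift $aq$ either pushes the particle away from $-q_{\rare}$ or is dominated by the Brownian motion on short time scales);
\item conditioned on (a) with $|a|\ge A$, $\prob[\,|\bar{a}-a|>\varepsilon|a|\,]\le \exp(-c q_0^{2}|a|)$, because the hitting time $\tau$ concentrates around the deterministic value $\log(1+\varepsilon)/a$;
\item $\prob[\text{event (c)}]$ is small except in a bounded window of $a$ in which the resulting cost is itself $O(1)$.
\end{enumerate}
The guess $\bar{a}$ is thus reliable in the large-$a$ regime (where reliability is crucial) and, for $|a|\le A$, still accurate enough to tell $\bar{a}>A$ from $\bar{a}\le A$ except on an event of exponentially small probability.

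The main estimate then follows by splitting the expected cost of $\sigma_*$ according to which of (a),(b),(c) occurred, and within (a) according to whether $\bar{a}>A$ or $\bar{a}\le A$. For $a\in[-A,A]$ (conclusion (1)) the dominant branch is ``event (a), $\bar{a}\le A$'': on that branch $\sigma_*$ runs $\bar{\sigma}$ from position $(1+\varepsilon)q_0$ over a horizon within $\varepsilon$ of $T$, and the construction of $\bar{\sigma}$ gives expected cost at most $(1+O(\varepsilon))\sup_{|b-a|\le \varepsilon|a|}\cJ(\sigma,b;T+\varepsilon,q_0)+O(\varepsilon)$; the branches ``(a), $\bar{a}>A$,'' (b), and (c) each have probability at most a small power of $\varepsilon$ times a Large $a$/Large $q$ cost bound $O(A^2)$, hence contribute at most $\varepsilon$. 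For $|a|>A$ (conclusion (2)): if $a>A$, with overwhelming probability event (a) fires early with $|\bar{a}-a|\le \varepsilon|a|$, triggering the Large $a$ strategy at a nearly correct guess and yielding $\varepsilon+(1+\varepsilon)\cJ_0(a;T,q_0)$; if $a<-A$ the system is strongly stable and $\cJ_0$ is of order $T$, and the same case analysis applies a fortiori.

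The main obstacle, and the reason the thresholds $q_{\rare},q_0^{\ast},A$ must be chosen in a precise order, is the branch in which $a$ is enormous but the guess $\bar{a}$ (or the controller $\bar{\sigma}$) undercontrols: there the instantaneous expected cost grows like $\exp(CTa)$, so one needs the probability of the bad event to decay faster than this in $a$. This is exactly what the Large $q$/Large $a$ cutoffs built into $\bar{\sigma}$ and into case (a) of the \Main\ guarantee, via the tail bound (ii) above combined with the $A$-boundedness of $\sigma$. Once that exponential trade-off is balanced, the remaining work is bookkeeping: tracking the $O(\varepsilon)$ losses from the rescaling of $\sigma$ to $\bar{\sigma}$, from the $\varepsilon$ seconds consumed by the \Prologue, and from the tolerance $|b-a|\le \varepsilon|a|$ in the supremum appearing in (1).
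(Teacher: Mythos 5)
Your overall architecture — a no-control Prologue triggered by hitting $(1+\text{small})q_0$, $-q_\rare$, or $T$, followed by a Main Act that switches among a Large $a$ strategy, a strengthened-and-rescaled $\bar\sigma$, and a bounded-regret/Large $q$ fallback — is exactly the paper's construction, and the auxiliary ingredients you invoke (the Large $q$ and Large $a$ strategies, the switch-over at $|q|=q_0^*$ plus rescaling, the large-deviation concentration of the hitting time) are the same. However, there are three genuine gaps in the estimates and bookkeeping that, as stated, would prevent the proof from closing.

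First, your estimate (i), that $\prob[\text{event (b)}]\le \exp(-cq_\rare^2)$ uniformly in $a$, is false. For bounded $a$ (take $a=0$ and appeal to the gambler's ruin computation for Brownian motion), the probability of hitting $-q_\rare$ before $(1+\text{small})q_0$ starting from $q_0$ is of order $(\text{small})\cdot q_0/(q_\rare+q_0)$, which is \emph{polynomial} in the Prologue threshold parameter and does not decay exponentially in $q_\rare$. The paper's Lemma \ref{lem: 1 side pro} gives the correct bound $C_{q_0}\varepsilon_0^{1/4}$ on $a\ge -\varepsilon_0^{-1/2}$, with Lemma \ref{lem: decay} covering $a<-\varepsilon_0^{-1/2}$. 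Your justification (``dominated by the Brownian motion on short time scales'') fails because the Prologue is not short: it can last all the way to $T$ when (a) does not fire, and in that window a downward excursion to $-q_\rare$ has non-negligible probability.

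Second, because the probability of branch (b) is only polynomially small, the cost conditional on (b) must be bounded by $O(\cJ_0(a;T,q_0))$ — which is what the bounded-regret/Large $q$ fallback from $-q_\rare$ actually provides — so that the product is $\varepsilon\cdot\cJ_0$ and absorbable into the $(1+\varepsilon)$ factor. Your bookkeeping instead pairs ``a small power of $\varepsilon$'' with an $O(A^2)$ cost bound; since $A$ is chosen \emph{after} $\varepsilon$ and can be huge, a bound of the form $\varepsilon^{1/4}\cdot A^2$ cannot be made $<\varepsilon$. The $A^2$ bound is only correct on the branch ``(a), $\bar a$ large, $|a|$ small,'' where the compensating probability is genuinely $\exp(-c_{\varepsilon_0}q_0^2A)$ (Lemma \ref{lem: Asharp}) and does beat $A^2$.

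Third, and most structurally: you take the Prologue threshold to be $(1+\varepsilon)q_0$, i.e., the same $\varepsilon$ that appears in the conclusion. But the Prologue expected cost and the probability of (b) are both $O(\varepsilon^{1/4})$, which \emph{exceeds} the permitted additive error $\varepsilon$. The paper avoids this by introducing a second, smaller parameter $\varepsilon_0\ll\varepsilon$: it first proves Theorem \ref{thm: main comp} with a Prologue threshold $(1+\varepsilon_0)q_0$ and time extension $T+\varepsilon_0$, so that the polynomial losses are $C_{T,q_0}\varepsilon_0^{1/4}<\varepsilon$; Theorem \ref{thm: main simp} is then deduced by monotonicity of cost in the time horizon. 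Without this two-parameter reduction, the quarter-power losses cannot be hidden inside the stated error.

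These are fixable, but they are not mere details of ``bookkeeping'': getting the right flavor of decay in each branch (polynomial in $\varepsilon_0$ versus exponential in $A$) and pairing it with the correct cost bound ($O(\cJ_0)$ versus $O(A^2)$) is the heart of the balancing argument.
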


We now state a corollary of Theorem \ref{thm: main simp}; this is the variant of Theorem \ref{thm: intro 2} for starting position $q_0=0$ mentioned in the Introduction. The proof of this corollary is given in Section \ref{sec: q0}.

\begin{cor}\label{cor: q0}
    Let $\varepsilon>0$. Then for $A>0$ sufficiently large depending on $\varepsilon, T, C_0, m_0$, the following holds.

Let $\sigma$ be an $A$-bounded strategy for time horizon $T + \varepsilon$ and starting position $\varepsilon$. Then the strategy $\hat{\sigma}_*$ for time horizon $T$ and starting position $0$ specified in Section \ref{sec: q0} satisfies the following.
\begin{enumerate}
    \item If $a \in [-A,A]$, then
    \[
    \cJ(\hat{\sigma}_*,a;T,0) < \varepsilon + (1+\varepsilon) \cdot \sup\{ \cJ(\sigma,b;T+\varepsilon,\varepsilon): |a-b| <  \varepsilon |a|\}.
    \]
    \item If $|a| > A$, then
    \[
    \cJ(\hat{\sigma}_*,a;T,0) < \varepsilon + (1+\varepsilon)\cdot \cJ_0(a;T,0).
    \]
\end{enumerate}
\end{cor}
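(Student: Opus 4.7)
The plan is to reduce Corollary \ref{cor: q0} to Theorem \ref{thm: main simp} by prepending a short ``prologue'' that steers the particle from the origin to the nonzero position $\varepsilon$, so that the main theorem becomes applicable. The key observation is that we have an $\varepsilon$ of slack in the time horizon (we are given $\sigma$ on $[0,T+\varepsilon]$ but need $\hat\sigma_*$ only on $[0,T]$), and this slack should absorb the length of the prologue as well as the $\varepsilon$ already eaten by Theorem \ref{thm: main simp}. A second observation I will use repeatedly is that the cost integrand is nonnegative, so restricting $\sigma$ to a shorter time window only decreases $\cJ(\sigma,\cdot;\cdot,\varepsilon)$.

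First I would fix small $\delta\in(0,\varepsilon/2)$ and a large $U>0$, and during $[0,\tau]$ with $\tau=\inf\{t\ge 0:q(t)=\varepsilon\}\wedge \delta$ apply the constant control $u\equiv U$ starting from $q_0=0$. The parameters $\delta,U$ will be chosen so that (i) the expected prologue cost $\E\int_0^\tau(q^2+U^2)\,dt$ is at most $\varepsilon/10$, and (ii) the probability of the ``failed prologue'' event $\{\tau=\delta\}$ is small enough that any reasonable fallback cost on that event contributes at most $\varepsilon/10$. Second, I would apply Theorem \ref{thm: main simp} with time horizon $T-\delta$ and starting position $\varepsilon$, using as input the restriction of $\sigma$ to $[0,T-\delta+\varepsilon]$; monotonicity in the horizon gives
\[
\cJ(\sigma_{**},a;T-\delta,\varepsilon)<\varepsilon+(1+\varepsilon)\sup\{\cJ(\sigma,b;T+\varepsilon,\varepsilon):|a-b|<\varepsilon|a|\}
\]
for $a\in[-A,A]$, and the analogous bound $<\varepsilon+(1+\varepsilon)\cJ_0(a;T,q_0)$ for $|a|>A$. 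Then $\hat\sigma_*$ is defined as: run the prologue on $[0,\tau]$; on $\{\tau<\delta\}$ run a time-shifted copy of $\sigma_{**}$ on $[\tau,\tau+T-\delta]$ starting from the exact position $\varepsilon$, and pad the residual interval $[\tau+T-\delta,T]$ of length less than $\delta$ with $u\equiv 0$; on $\{\tau=\delta\}$ run a safe fallback (e.g.\ a Large $q$/Large $a$ style strategy from the main proof) whose cost is bounded uniformly in $a$ by a polynomial in $A$ times a factor independent of $\varepsilon$.

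Collecting terms, the total expected cost splits as prologue $+$ main act on the good event $+$ pad $+$ fallback on the bad event. The prologue and fallback each cost at most $O(\varepsilon)$ by the choice of $\delta$ and $U$; the pad term is $O(\delta\cdot A^{2m_0})$ which is $O(\varepsilon)$ after requiring $\delta$ small enough in terms of $A$, $C_0$, $m_0$; and the main-act term is bounded by the inequality for $\sigma_{**}$ above. Adding these gives conclusions (1) and (2) of the corollary, possibly with an $\varepsilon$ replaced by a fixed multiple of it, which is absorbed by readjusting $\varepsilon$ at the start.

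The main obstacle will be verifying properties (i) and (ii) of the prologue uniformly in $a\in\R$. For large positive $a$ the instability drives the particle quickly past $\varepsilon$, so both items are easy. For $a\ge 0$ moderate, $U$ large makes $\tau\approx\varepsilon/U$ and the cost $\varepsilon^2/\delta+U^2\delta$ small when $U$ and $\delta$ are chosen in balance. The delicate regime is $a\ll 0$: the linear drift $aq+U$ pulls $q$ toward the stable equilibrium $-U/a$, which must exceed $\varepsilon$, forcing $U>|a|\varepsilon$. Since $|a|$ is unbounded, no single constant $U$ suffices; I would therefore either (a) restrict the prologue guarantee to $|a|\le A^{m_0}$ and handle $|a|>A^{m_0}$ via the comparison to $\cJ_0(a;T,q_0)$ already provided by part~(2) of Theorem \ref{thm: main simp} (together with the fact that for large $|a|$ the known-$a$ strategy $\sigma_{\opt}(a)$ itself moves through $\varepsilon$ quickly), or (b) use an adaptive prologue that increases $|u|$ when $q$ fails to move, at the cost of a slightly more elaborate stopping-time analysis. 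Either way the ``bad'' values of $a$ are absorbed into conclusion~(2) of the corollary, and the cost bookkeeping goes through.
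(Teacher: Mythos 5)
Your overall plan—prepend a short prologue that moves the particle from $0$ to a nonzero position, then invoke the main theorem—is the right idea, and is what the paper does. But the particular design of your prologue introduces real difficulties that the paper avoids, and a couple of the steps in your bookkeeping do not actually close.

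First, the paper's prologue simply sets $u\equiv 0$ and waits for $|q|$ to reach $\varepsilon$, with no time cap. While $|q|<\varepsilon$, the running cost is at most $\varepsilon^2$ per unit time, so even if the prologue lasts the entire game the total cost is $\le T\varepsilon^2 < T\varepsilon$, and there is no need for any fallback. Your version uses a nonzero constant control $U$ and a hard time cap $\delta<\varepsilon/2$, and this creates a cost-versus-time tradeoff that does not balance: to guarantee the particle crosses $\varepsilon$ by time $\delta$ you need, roughly, $U\gtrsim \varepsilon/\delta > 2$, which makes the control cost $U^2\tau \approx U\varepsilon > 2\varepsilon$, exceeding your budget of $\varepsilon/10$. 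What actually drives $q$ out of $(-\varepsilon,\varepsilon)$ quickly is the Brownian noise (exit time is $\sim\varepsilon^2$), and in that regime $U$ is only a liability. This is also why your worry about large negative $a$ arises at all: if $u=0$ you are not fighting the stable drift, and you don't care whether the particle ever leaves the small interval.

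Second, your pad interval $[\tau+T-\delta,\,T]$ after the main act is not harmless. You claim its cost is $O(\delta\cdot A^{2m_0})$, but during the pad you propose $u\equiv 0$, and $q$ at the start of the pad is the terminal state of $\sigma_{**}$, which is not uniformly bounded; with the unstable drift $aq$ for large positive $a$ the expected $\int q^2$ over the pad can be large. The paper sidesteps this entirely: it defines $\hat\sigma_*$ to run $\sigma_*$ (a strategy for time horizon $T$) from time $\hat t$ to time $T$, i.e.\ for duration $T-\hat t\le T$, and uses the observation that a strategy for horizon $T$ restricts to a strategy for any shorter horizon with no larger expected cost. There is no pad, no $T-\delta$ shift, and no need to guess $\delta$ in terms of $A$.

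Third, your fallback on $\{\tau=\delta\}$ is under-specified. You propose a ``Large $q$/Large $a$ style'' strategy with cost bounded uniformly in $a$ by a polynomial in $A$, but on that event the starting position is some point in $(-\varepsilon,\varepsilon)$, i.e.\ small, so the Large $q$ machinery does not apply directly, and a polynomial-in-$A$ bound that is uniform over all $a\in\R$ is exactly the kind of statement that the whole paper has to work hard to obtain. The paper's construction needs no fallback: either the particle reaches $\pm\varepsilon$ (then run $\sigma_*$), or it never does (then the total cost over $[0,T]$ is already $\le T\varepsilon$).

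Finally, one small but necessary ingredient you don't mention: for part (2) you need to replace $\cJ_0(a;T,\varepsilon)$ by $\cJ_0(a;T,0)$ on the right-hand side, and this requires comparing the two for $|a|>A$; the paper does this via Lemma \ref{lem: OS asymp}. And for the event $q(\hat t)=-\varepsilon$ you need the symmetry remark (Remark \ref{rmk: symmetry}) to view $\sigma_*$ as a strategy for starting position $-\varepsilon$.

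If you set $U=0$, drop the time cap (let the prologue run to $T$), drop the pad by using the restriction-of-horizon monotonicity, and add the $\cJ_0(a;T,\varepsilon)\approx\cJ_0(a;T,0)$ step, your argument becomes the paper's.
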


We remark that Corollary \ref{cor: q0} implies a variant of Corollary \ref{cor: hr} for $q_0 = 0$, but we don't state it here.

Our proof of Theorem \ref{thm: main simp} makes use of two additional strategies. These strategies, defined in Sections \ref{sec: lqs} and \ref{sec: las}, respectively, are almost optimal when $|q_0|$ and $a$ are large. Specifically, we prove the following theorems.

\begin{thm}\label{thm: lqs}
Let $\varepsilon > 0$. Then there exists $q_{\bg} \ge 1$ depending (continuously) on $\varepsilon,T$ such that the following is true. For any $q_0 \ge q_{\bg}$, let $\elqs$ be the strategy for time horizon $T$ and starting position $q_0$ defined in Section \ref{sec: lqs} depending on $\varepsilon, T,q_0$. Then
\[
\cJ(\elqs, a;T,q_0) \le (1+\varepsilon) \cdot \cJ_0(a;T,q_0)\;\emph{for any}\; a \in \R.
\]
\end{thm}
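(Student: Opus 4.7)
The strategy $\elqs$ will have two epochs. On an observation epoch $[0, t_1]$ with $t_1 = t_1(\varepsilon, T, q_0)$ small, we set $u \equiv 0$. On the control epoch $[t_1, T]$ we play the simple feedback $u(t, a) = -\kappa(T-t, \bar a)\, q(t, a)$, where $\bar a$ is an empirical estimate of $a$ constructed from $(q(s, a))_{s \in [0, t_1]}$ and truncated to $|\bar a| \le M(q_0)$ so that the gain stays bounded and the resulting control lies in $L^2$. On the observation epoch the dynamics $dq = aq\,dt + dW$ integrate to
\[
q(t, a) = e^{a t} q_0 + \int_0^t e^{a(t-s)}\, dW(s),
\]
and for $|q_0|$ large the deterministic signal $e^{at}q_0$ dominates the Gaussian noise. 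The plan is to choose $t_1$ so that $t_1 \to 0$ while $|q_0|\sqrt{t_1} \to \infty$ as $q_0 \to \infty$: the first condition makes the observation-epoch contribution to $\cost$ negligible compared to $\cJ_0(a;T,q_0)$, the second ensures strong concentration of $\bar a$ around $a$.

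The main work is the control epoch, which I will split by the size of $a$. For $|a| \le A_0$ (an absolute constant), the estimator satisfies $|\bar a - a| < \delta$ with probability $1 - O(\exp(-c q_0^2 t_1))$ for any prescribed $\delta$; on this good event, continuity of $\kappa(s, \cdot)$ and of $\cJ_0(\cdot;T,q_0)$ in $a$ gives cost within a factor $1+O(\delta)$ of $\cJ_0(a;T,q_0)$, while on the bad event the cost is polynomial in $q_0$ and is dwarfed by the probability bound. For $a$ large negative the system is self-stabilizing and any reasonable strategy gives cost $O(1+q_0^2) \le (1+\varepsilon)\cJ_0(a;T,q_0)$.

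The delicate case is $a$ large positive, where direct computation with the explicit formula for $\kappa$ gives $\cJ_0(a;T,q_0) \gtrsim q_0^2\, e^{2a(T-t_1)}$. On the good event $\{|\bar a - a| < \varepsilon a\}$, which by Gaussian tail estimates has probability at least $1 - O(\exp(-c q_0^2 a))$, a perturbative computation using the formula for $\kappa$ will bound the cost of $\sigma_\opt(\bar a)$ on the actual dynamics by $(1+O(\varepsilon))\cJ_0(a;T,q_0)$. On the complementary bad event the conditional expected cost is at most $O(q_0^2\, e^{CTa})$, so its contribution is $O(q_0^2\, e^{CTa - c q_0^2 a})$; since $q_0 \ge q_\bg$ can be taken as large as we like relative to $T$, the factor $\exp(-cq_0^2 a + CTa)$ is super-exponentially small in $a$ and is absorbed in $\varepsilon\cJ_0(a;T,q_0)$ uniformly in $a \gtrsim 1$.

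Finally I will piece the epochs together via the strong Markov property at $t_1$: conditioned on $\cF_{t_1}$, the control-epoch cost is the cost of $\sigma_\opt(\bar a)$ on the shifted problem with starting position $q(t_1, a)$ and time horizon $T - t_1$. Integrating out the observation-epoch randomness and combining with the observation-epoch bound yields $\cJ(\elqs, a; T, q_0) \le (1+\varepsilon)\cJ_0(a; T, q_0)$. The main obstacle will be the bookkeeping in the large-positive-$a$ regime: the parameters $t_1$, the cap $M(q_0)$, and the precise disaster event must be synchronized so that the Gaussian tail of $\bar a$, the exponential blowup of the disaster cost, and the growth of $\cJ_0(a; T, q_0)$ all balance simultaneously for every $a>0$ once $q_0 \ge q_\bg$.
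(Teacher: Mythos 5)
Your approach uses a \emph{deterministic} observation window $[0,t_1]$ with $t_1 = t_1(\varepsilon, T, q_0)$, whereas the paper ends its Prologue at a \emph{stopping time} (the first hitting time of $\{(1+\varepsilon)q_0\}\cup\{(1-\varepsilon)q_0\}$, capped at $t_{\max}$). This difference is not cosmetic: a fixed observation time fails for large positive $a$. With $u \equiv 0$ on $[0,t_1]$, the trajectory satisfies $\E[q(t)^2] \approx q_0^2 e^{2at}$, so the observation-epoch cost alone is
\[
\E\!\left[\int_0^{t_1} q^2\,dt\right] \;\approx\; q_0^2\,\frac{e^{2at_1}-1}{2a},
\]
which is exponential in $at_1$, while $\cJ_0(a;T,q_0) \approx 2a(q_0^2+T)$ is only linear in $a$. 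For any fixed choice of $t_1>0$ (no matter how small, and no matter how large $q_0$ is), picking $a$ with $at_1 \gg 1$ makes the ratio $\cJ(\lqs,a)/\cJ_0(a)$ blow up. Moreover, even if one absorbed the observation cost, $q(t_1) \approx q_0 e^{at_1}$ is then exponentially far from $q_0$, and controlling from there costs at least $\sim 2a\,q_0^2 e^{2at_1} \gg \cJ_0(a;T,q_0)$ even for the optimal known-$a$ player. This blowup is unconditional — it occurs on the \emph{good} estimation event, not just the Gaussian tail event you flag as the ``delicate case'' — so the balancing act you describe in your last paragraph cannot be made to work: no choice of $t_1$, cap $M(q_0)$, or disaster-event threshold fixes it.

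The paper's stopping-time construction is precisely what circumvents this. By ending the Prologue the instant $q$ hits $(1\pm\varepsilon)q_0$ (with probability $\to 1$ for large positive $a$ it hits $(1+\varepsilon)q_0$ in time $\sim \log(1+\varepsilon)/a$), the state never leaves $[(1-\varepsilon)q_0, (1+\varepsilon)q_0]$ during the Prologue, so the Prologue cost is $O(\varepsilon^{1/2} q_0^2)$ uniformly in $a$ (Lemma \ref{lem: 2 side pro}(iv)), and the Main Act starts at a controlled position of order $q_0$. The estimate $\bar a = \log(1+\varepsilon)/\tau_M$ also concentrates sharply for large $a$ because $\tau_M$ is small precisely then. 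If you reframe your observation epoch as ending at this hitting time rather than at a deterministic $t_1$, the rest of your outline (concentration for bounded $a$, perturbation of $\kappa$, the exponentially small bad event dominated by $\cJ_0$ growth) does align in spirit with the paper's argument, though the paper further splits into five regimes of $\bar a$ to handle the control-epoch bookkeeping cleanly.
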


In our previous paper \cite{carruth2022controlling}, we exhibited a strategy $\br_0$ satisfying
\[
\cJ(\br_0,a;T,0) < C_{T} \cdot \cJ_0(a;T,0)\;\text{for any}\; a \in \R.
\]
By making a simple modification to the strategy $\lqs$ in Theorem \ref{thm: lqs} we can produce, for any $q_0\in \R$, a strategy $\br$ satisfying
\[
\cJ(\br;a,T,q_0) < C_{T,q_0} \cdot \cJ_0(a;T,q_0)\;\text{for any}\;a\in \R;
\]
we refer to this as a bounded regret strategy for starting position $q_0$.

\begin{thm}\label{thm: las}
Let $\varepsilon>0$ and let $\elas$ be the strategy for time horizon $T$ and starting position $q_0$ defined in Section \ref{sec: las} depending on $\varepsilon, T, q_0$. Then for any $A\ge A_{\min}(\varepsilon,T,q_0)$ we have
\[
\cJ(\elas, a; T,q_0) \le (1+\varepsilon) \cdot \cJ_0(a;T,q_0) \;\text{for any} \; a \ge A
\]
and
\[
\cJ(\elas, a;T,q_0) \le C_{T,q_0} \cdot A^2\;\text{for any}\; a \le A.
\]
\end{thm}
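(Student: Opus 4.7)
I would define $\las$ in three components. An \emph{observation phase}: set $u\equiv 0$ on $[0,\tau^*]$, where $\tau^*=\min\{\tau_0,\,\inf\{t:|q(t)|\ge q_\nd\}\}$ and $\tau_0,q_\nd$ depend only on $\varepsilon,T,q_0$. A \emph{main phase} on $[\tau^*,T]$: form an estimate $\bar a$ from $(\tau^*,q(\tau^*))$ (specifically $\bar a=(\tau^*)^{-1}\log(|q(\tau^*)/q_0|)$ if we stopped by hitting $q_\nd$, else $\bar a=0$) and play the known-$\bar a$ feedback $u(t)=-\kappa(T-t,\bar a)q(t)$. A \emph{safety cutoff}: the instant $|q(t)|$ reaches a threshold $q_0^*>q_\nd$, switch to the Large $q$ strategy of Theorem~\ref{thm: lqs} restarted from $\sgn(q(t))\cdot q_0^*$ with remaining horizon $T-t$. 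All of $\tau_0,q_\nd,q_0^*$ depend on $\varepsilon,T,q_0$ only, not on $A$ or $a$.

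\textbf{Case $a\ge A$.} Split the sample space into $G=\{|\bar a-a|\le\delta a\}$ and $B=G^c$ for a small $\delta=\delta(\varepsilon)$. A Gaussian-tail bound on the Brownian integral $\int_0^{t^*}e^{a(t^*-s)}\,dW(s)$, combined with linearization of the hitting equation $q_0 e^{at^*}+Z(t^*)=q_\nd$ near $\bar t=a^{-1}\log(q_\nd/q_0)$, yields $\prob(B)\le e^{-c(\varepsilon,T,q_0)\,a}$. On $G$ the observation cost is at most $q_\nd^2\tau_0$ and the main-phase feedback lies within $\delta$ of $\kappa(\cdot,a)$, so a standard Riccati-ODE stability estimate comparing $\sigma_\opt(\bar a)$ with $\sigma_\opt(a)$ gives $\E[\cost\cdot\mathbbm{1}_G]\le(1+\varepsilon/2)\cJ_0(a;T,q_0)$ once $A$ is large enough that the $q_\nd^2\tau_0$ term is absorbed into $\varepsilon\cJ_0(a;T,q_0)\gtrsim\varepsilon(q_0^2+Ta)$. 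On $B$ the safety cutoff contains the damage: undercontrol forces $|q|$ to hit $q_0^*$ within $O(a^{-1}\log(q_0^*/q_\nd))$, after which Theorem~\ref{thm: lqs} caps the remaining expected cost by $(1+\varepsilon)\cJ_0(a;T,q_0^*)=O((q_0^*)^2+Ta)$; overcontrol ($\bar a\gg a$) has even smaller probability since it requires $\tau^*$ atypically small. Combining, $\E[\cost\cdot\mathbbm{1}_B]\le e^{-cA}\cdot\poly(a,q_0^*)\le(\varepsilon/2)\cJ_0(a;T,q_0)$ for $A\ge A_{\min}(\varepsilon,T,q_0)$.

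\textbf{Case $a\le A$.} Here I only need the bound $C_{T,q_0}A^2$, a routine check. The observation cost is $O_{T,q_0}(1)$ since $|q|\le q_\nd$ and $\tau^*\le\tau_0$. Whatever value $\bar a$ takes, the main phase incurs expected cost $O_{T,q_0}(A^2)$ up to the cutoff: if $|\bar a|\lesssim A$ the explicit form of $\kappa(\cdot,\bar a)$ gives this directly, while if $|\bar a|\gg A$ the closed-loop dynamics $dq=(a-\kappa(T-t,\bar a))q\,dt+dW$ are strongly mean-reverting, so $\E[u^2+q^2]=\E[(1+\kappa^2)q^2]$ remains $O_{T,q_0}(A^2)$ on average. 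After the cutoff, Theorem~\ref{thm: lqs} adds $(1+\varepsilon)\cJ_0(a;T,q_0^*)=O_{T,q_0}(A^2)$ as well.

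\textbf{Main obstacle.} The technical crux is the simultaneous calibration of the $A$-independent parameters $\tau_0,q_\nd,q_0^*$ so that for $a\ge A$: (i) the observation cost is negligible versus $\cJ_0(a;T,q_0)$; (ii) the estimator $\bar a$ lies within $\delta a$ of $a$ with probability $\ge 1-e^{-cA}$; and (iii) the worst-case cost $(q_0^*)^2+Ta$ on $B$, weighted by $\prob(B)$, is dwarfed by $\varepsilon\cJ_0(a;T,q_0)$. Feasibility comes from the paper's recurring theme that failure probabilities are exponentially small in $A$ while associated costs are only polynomial in $A,q_0^*,a$. A secondary difficulty is the continuity of $\cJ(\sigma_\opt(\bar a),a;T,q_0)$ at $\bar a=a$ with an explicit modulus, handled by comparison of the Riccati ODEs defining $\kappa(\cdot,\bar a)$ and $\kappa(\cdot,a)$, since $\kappa$ is elementary.
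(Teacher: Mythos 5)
Your proposal follows essentially the same route as the paper: an uncontrolled observation phase, an estimate $\bar{a}$ from the exit data, a known-$\bar{a}$ feedback phase, and a safety cutoff that switches to the Large~$q$ strategy when $|q|$ reaches $q_0^*$. The design differences are cosmetic. You use the exact gain $\kappa(T-t,\bar{a})$ where the paper uses the constant gain $2\bar{a}$ (the two agree asymptotically for large $\bar{a}$, and the constant gain is what makes Corollary~\ref{cor: CG score} directly applicable); you use a symmetric threshold $\pm q_\nd$ with a time cap $\tau_0$ where the paper uses the asymmetric boundaries $(1+\varepsilon)q_0$ above and $-q_0^*$ below with no explicit time cap; and you switch to $\lqs$ directly where the paper switches to the derived bounded-regret strategy $\br$.

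There is, however, a genuine gap in your Case $a\le A$. You assert that ``whatever value $\bar{a}$ takes, the main phase incurs expected cost $O_{T,q_0}(A^2)$'' and that for $|\bar{a}|\gg A$ the strongly mean-reverting dynamics keep $\E[(1+\kappa^2)q^2]$ at $O_{T,q_0}(A^2)$ ``on average.'' The conditional cost given $\bar{a}=\tilde{a}$ is not bounded by $A^2$: with $\kappa\approx 2\tilde{a}$, the stationary variance of $q$ is $\approx 1/(4\tilde{a})$, so $\E[(1+\kappa^2)q^2]\approx\tilde{a}$ per unit time, and including the initial transient from $q_\nd$ the conditional main-phase cost scales like $\tilde{a}(q_\nd^2+T)$ (or, using the cruder bound $|q|\le q_0^*$, like $\tilde{a}^2(q_0^*)^2T$). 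Either way this is unbounded in $\tilde{a}$. The correct conclusion requires explicitly weighting by the exponentially small tail of $\bar{a}$ and summing over dyadic blocks $\{10^n A<\bar{a}\le 10^{n+1}A\}$; this is precisely the paper's computation combining the crude conditional bound $\E[\cost_C\mid \bar{a}=\tilde{a}]\le C_{T,q_0}(\tilde{a}^2+1)$ in \eqref{eq: ctrl 4.1} with the tail bound \eqref{eq: Astar} to get \eqref{eq: ctrl 8}. The same issue arises in your Case $a\ge A$ treatment of the overcontrol event $\bar{a}\gg a$ inside $B$: the conditional cost there also grows with $\bar{a}$, so ``$\prob(B)\le e^{-cA}$ times a polynomial'' is not quite enough --- one needs the dyadic decomposition, which you gesture at (``even smaller probability'') but do not carry out. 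Your ``Main obstacle'' paragraph shows you understand the exponential-versus-polynomial tension, but as stated the conditional bound in Case $a\le A$ is simply false, and the proof would not close without the dyadic sum.
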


\section{Preliminary results on strategies}\label{sec: known a}

We begin this section by stating some properties of strategies.

\begin{rmk}\label{rmk: symmetry}
    By symmetry, any strategy $\sigma$ for time horizon $T$ and starting position $q_0$ gives rise to a strategy $\sigma_-$ for time horizon $T$ and starting position $-q_0$ defined by
\begin{align*}
q^{\sigma_-}(t,a) = - q^\sigma(t,a), \qquad u^{\sigma_-}(t,a) = - u^\sigma(t,a)
\end{align*}
and satisfying
\[
\cJ(\sigma, a;T,q_0) = \cJ(\sigma_-,a;T,-q_0).
\]
\end{rmk}

For the remainder of this paper we assume, without loss of generality, that $q_0>0$; this is justified by Remark \ref{rmk: symmetry}.

\begin{rmk}[Rescaling property]\label{rmk: rescale}
Let $\sigma$ be a strategy for time horizon $T$ and starting position $q_0$ with particle trajectories $q^\sigma(t,a)$ and control variables $u^\sigma(t,a)$. Let $\lambda >0$ and define the rescaled random variables
\begin{align*}
    &\tilde{q}(t,a) = \lambda \cdot q^\sigma\bigg(\frac{t}{\lambda^2},\lambda^2 a\bigg),\qquad \tilde{u}(t,a) = \frac{1}{\lambda} \cdot u^\sigma \bigg( \frac{t}{\lambda^2},\lambda^2 a\bigg),\\
    & \tilde{W}(t) = \lambda \cdot W\bigg(\frac{t}{\lambda^2}\bigg).
\end{align*}
Note that $\tilde{W}(t)$ is a standard Brownian motion and that for any $a \in \R$, $t \in [0,\lambda^2 T]$ we have
\[
\tilde{q}(t,a) = \lambda q_0 + \tilde{W}(t) + \int_0^t[a\tilde{q}(\tau,a) + \tilde{u}(\tau,a)]\ d\tau
\]
with probability 1. It follows that $\tilde{q}(t,a)$, $\tilde{u}(t,a)$ determine a strategy for time horizon $\lambda^2 T$ and starting position $\lambda q_0$. Denoting this strategy by $\sigma_\lambda$, we have
\begin{align*}
\cJ(\sigma_\lambda, a ; \lambda^2 T, \lambda q_0) &= \eE \bigg[ \int_0^{\lambda^2 T} \big(\tilde{q}^2(t,a) + \tilde{u}^2(t,a)\big)dt\bigg]\\
& = \eE\bigg[ \int_0^T \big( \lambda^4\cdot  (q^\sigma(t,\lambda^2 a))^2 +  (u^\sigma(t,\lambda^2 a))^2\big)dt \bigg]\\
& \le \max\{\lambda^4,1\} \cdot \cJ(\sigma, \lambda^2 a; T, q_0).
\end{align*}
We say that $\sigma_\lambda$ is a \emph{rescaling} of the strategy $\sigma$. Note that the rescaled strategy $\sigma_1$ is equal to the original strategy $\sigma$. 
\end{rmk}

\begin{rmk}[Branching strategies]
    We will often decide to switch from one strategy to another.

    To explain how we do that, recall that we denote strategies by $\sigma$, $\sigma'$, $\hat{\sigma}$, etc. We introduce the notion of a \emph{parametrized strategy}, denoted $\sigma(\cdot)$. For each parameter value $\alpha \in \R^d$ (some $d\ge 0$), $\sigma(\alpha)$ is a strategy. (When $d=0$, a parametrized strategy is just a strategy.) For instance, the strategy $\sigma_\emph{\opt}(\beta)$ for time horizon $T$ and starting position $q_0$ is a parametrized strategy; $\beta,T,q_0$ are the parameters. We remark that the parameter $\alpha$ will often include our guess for the unknown $a$. 

    Now suppose we are given parametrized strategies $\sigma_0(\cdot)$, $\sigma_1(\cdot)$,\dots, $\sigma_N(\cdot)$. We will combine the $\sigma$'s into a new parametrized strategy $\sigma^\#$ (a \emph{branching strategy}). Suppose we are given a parameter value $\alpha$ and that $\sigma_0(\alpha)$ is a strategy for some time horizon $T$ and starting position $q_0$. The strategy $\sigma^\#(\alpha)$ is then also a strategy for time horizon $T$ and starting position $q_0$.

    For each $a \in \R$, we pick a stopping time $\tau(a,\alpha) \le T$ depending on $\alpha$ and satisfying the following: for any $a,b \in \R$, $t \in [0,T]$, and $\omega \in \Omega$, if $q^{\sigma_0(\alpha)}(s,a,\omega)=q^{\sigma_0(\alpha)}(s,b,\omega)$ for all $s \in [0,t]$ then $\tau(a,\alpha,\omega)>t$ if and only if $\tau(b,
    \alpha,\omega)>t$. Note that this condition ensures that the stopping times are defined only in terms of $q$ and cannot use knowledge of the unknown $a$. The strategy $\sigma^\#(\alpha)$ proceeds as follows.
    
    Until time $\tau(a,\alpha)$, we execute the strategy $\sigma_0(\alpha)$. If $\tau(a,\alpha) = T$, then we are done.

    If $\tau(a,\alpha) < T$, then we will pick a new parametrized strategy $\hat{\sigma}(\cdot)$ and a new parameter $\hat{\alpha}$. The $\hat{\sigma}(\cdot)$ will be one of our given parametrized strategies $\sigma_1(\cdot)$, \dots, $\sigma_N(\cdot)$. Our choice of $\hat{\sigma}(\cdot)$ and $\hat{\alpha}$ is determined by $\alpha$, the history up to time $\tau(a,\alpha)$, and the requirement that $\hat{\sigma}(\hat{\alpha})$ is a strategy for starting position $\hat{q}_0 = q(\tau(a,\alpha))$ and some time horizon $\hat{T} \ge (T - \tau(a,\alpha))$.

    Once we have picked $\hat{\sigma}(\cdot)$ and $\hat{\alpha}$, we forget the past, regard $t = \tau(a,\alpha)$ as if we were at $t=0$, and execute the strategy $\hat{\sigma}(\hat{\alpha})$, starting at position $\hat{q}_0$. We stop playing at time $T$.

    Thus, we have combined our parametrized strategies $\sigma_0(\cdot)$, $\sigma_1(\cdot),$ \dots, $\sigma_N(\cdot)$ into a branching (parametrized) strategy $\sigma^\#(\cdot)$. 

    We may then combine our $\sigma^\#(\cdot)$ with additional parametrized strategies to form further branching strategies.

\end{rmk}

We note that the strategies $\sigma_*$ and $\tilde{\sigma}$ constructed in Section \ref{sec: almost optimal strategy}, the strategy $\lqs$ constructed in Section \ref{sec: lqs}, and the strategy $\las$ constructed in Section \ref{sec: las} are all examples of branching strategies.

\subsection{Optimal known-\texorpdfstring{$a$}{a} strategies}

Let $a \in \R$. In Section \ref{sec: setup}, we defined the optimal expected cost for known $a$ by
\[
\cJ_0(a;T,q_0) = \cJ(\sigma_\opt(a),a;T,q_0),
\]
where $\sigma_\opt(a)$ is the simple feedback strategy with gain function
\begin{equation}\label{eq: kappa def}
\kappa(T-t,\alpha) = \frac{\tanh((T-t)\sqrt{\alpha^2+1})}{\sqrt{\alpha^2+1}-\alpha\tanh((T-t)\sqrt{\alpha^2+1})};
\end{equation}
recall that we refer to $\sigma_\opt(a)$ as an optimal known-$a$ strategy. Observe that (for fixed $\alpha$) $\kappa$ solves a Riccati equation:
\[
\frac{d \kappa}{d t}(t,\alpha) = [1+2a\kappa(t,\alpha) - \kappa^2(t,\alpha)].
\]
For $t \in [0,T]$ we define
\begin{equation}\label{eq: K def}
K(t,\alpha) = \int_0^{t} \kappa(s,\alpha)\ ds.
\end{equation}
Clearly, we have (again for fixed $\alpha$)
\[
\frac{d K}{d t}(t,\alpha) = \kappa(t,\alpha).
\]
It is well-known (see, for example, \cite{astrom}) that
\begin{equation}\label{eq: known a 6.01}
\cJ_0(a;T,q_0) = \kappa(T,a)\cdot q_0^2 + K(T,a).
\end{equation}
This implies the following remark.
\begin{rmk}\label{cor: q_0s}
Let $0< q_0'< q_0''$. Then
\[
\cJ_0(a;T,q_0') < \cJ_0(a;T, q_0'') < \bigg(\frac{q_0''}{q_0'}\bigg)^2 \cJ_0(a;T, q_0') \;\text{for any}\; a \in \R.
\]
\end{rmk}

Combining \eqref{eq: kappa def} and \eqref{eq: K def} and evaluating the resulting integral, we get
\begin{multline*}
K(T,a) =  \big(a+\sqrt{a^2+1}\big)T+\log\bigg(\frac{\sqrt{a^2+1}-a}{2\sqrt{a^2+1}}\bigg)\\
+\log\bigg(1+e^{-2T\sqrt{a^2+1}}\bigg(\frac{\sqrt{a^2+1}+a}{\sqrt{a^2+1}-a}\bigg)\bigg).
\end{multline*}
Note that for any $\varepsilon>0$, there exists $A>0$ depending on $\varepsilon$ and the time horizon $T$ such that:
\begin{itemize}
\item For $a > A$, we have
\begin{align}
&|\kappa(T,a)-2a| < \varepsilon a,\label{eq: known a 6.2}\\
& |K(T,a)-2aT| < \varepsilon a T. \label{eq: known a 6.3}
\end{align}
\item For $a<-A$, we have
\begin{align}
&\bigg| \kappa(T,a) - \frac{1}{2|a|}\bigg| \le \frac{\varepsilon}{|a|},\label{eq: known a 6.4}\\
&\bigg| K(T,a) - \frac{T}{2|a|}\bigg| \le \frac{\varepsilon T}{|a|}.\label{eq: known a 6.5}
\end{align}
\end{itemize}

From \eqref{eq: known a 6.01}--\eqref{eq: known a 6.5}, we deduce that for any $\varepsilon>0$, there exists $A>0$ depending on $\varepsilon$ and $T$ such that
\begin{align}
&|\cJ_0(a;T,q_0) - 2a(q_0^2 + T)| < \varepsilon a (q_0^2+T) \;\text{when}\; a > A,\; \text{and}\label{eq: ka 1}\\
&\bigg| \cJ_0(a;T,q_0) - \frac{1}{2|a|}(q_0^2+T)\bigg| < \frac{\varepsilon}{|a|}(q_0^2+T)\;\text{when}\; a<-A.\label{eq: ka 2}
\end{align}

Now let $\varepsilon>0$ be arbitrary and introduce $\delta>0$ sufficiently small depending on $\varepsilon,T,q_0$. Suppose that $T'>0$ and $q_0' \in \R$ satisfy $|T-T'|, |q_0-q_0'|<\delta$. We claim that
\begin{multline}\label{eq: ka 3}
|\cJ_0(a';T',q_0') - \cJ_0(a;T,q_0)|\\ < \varepsilon \cdot \cJ_0(a;T,q_0)\; \text{for any} \; a \in \R, |a- a'|<|a| \delta.
\end{multline}
 By \eqref{eq: ka 1} and \eqref{eq: ka 2} above, there exists $A >0$ depending on $\varepsilon, T$ such that:
    \begin{enumerate}
    \item For any $a, a' > A$, we have
    \begin{align}
    &|\cJ_0(a'; T', q_0') - 2a'( (q_0')^2+T')| < \varepsilon a' ((q_0')^2+T') \label{eq: cont lem 1}\\ 
    &| \cJ_0(a; T, q_0) - 2a(q_0^2+T)| < \varepsilon a (q_0^2+T).\label{eq: cont lem 2}
    \end{align}
    \item For any $a, a' <- A$, we have
    \begin{align}
    &\bigg|\cJ_0(a';T',q_0') - \frac{1}{2|a'|}((q_0')^2+T')\bigg| < \frac{\varepsilon}{|a'|}((q_0')^2+T'),\label{eq: cont lem 3}\\
    &\bigg|\cJ_0(a;T,q_0) - \frac{1}{2|a|}(q_0^2+T)\bigg| < \frac{\varepsilon}{|a|}(q_0^2+T).\label{eq: cont lem 4}
    \end{align}
    \end{enumerate}
 Combining \eqref{eq: cont lem 1}, \eqref{eq: cont lem 2}, and using the assumptions $|T-T'|< \delta$ and $|q_0 - q_0'| < \delta$, we get
\begin{multline*}
|\cJ_0(a';T',q_0') - \cJ_0(a;T,q_0)| < C_{T,q_0}\cdot a \cdot( \varepsilon+ \delta)\\ \text{for any}\; a\ge 2A, \; |a-a'| < \delta a.
\end{multline*}
Taking $\delta$ sufficiently small depending on $\varepsilon$, we use \eqref{eq: cont lem 2} to deduce that (for $A$ sufficiently large depending on $\varepsilon$ and $T$) we have
\begin{equation}\label{eq: cont lem 5}
\begin{split}
|\cJ_0(a';T',q_0') - \cJ_0(a;T,q_0)| < C'_{T,q_0}\cdot \varepsilon \cdot &\cJ_0(a;T,q_0)\\ &\text{for any}\; a \ge 2A, \; |a-a'| < \delta a.
\end{split}
\end{equation}
 Similarly, we use \eqref{eq: cont lem 3}, \eqref{eq: cont lem 4} to deduce that
\begin{multline}\label{eq: cont lem 6}
|\cJ_0(a';T',q_0') - \cJ_0(a;T,q_0)| < C'_{T,q_0}\cdot \varepsilon \cdot \cJ_0(a;T,q_0)\\
\text{for any}\; a \le  - 2 A, |a - a' | < \delta |a|.
\end{multline}
Note that \eqref{eq: cont lem 5} and \eqref{eq: cont lem 6} imply that for a sufficiently large number $\tilde{A}>A$ depending on $\varepsilon$, $T$, $q_0$, we have
\begin{multline}
    |\cJ_0(a';T',q_0') - \cJ_0(a;T,q_0)| \\ < \varepsilon \cdot \cJ(a;T,q_0)\;\text{for any} \; |a| \ge 2 \tilde{A}, |a- a'| < \delta |a|.
\end{multline}

Next, we note that \eqref{eq: known a 6.01} implies that $\cJ_0(a;T,q_0)$ is of the form
\[
\cJ_0(a; T, q_0) = f_1(a,T)q_0^2 + f_2(a,T)
\]
for smooth functions $f_1, f_2: \R \times (0,\infty)\rightarrow (0, \infty)$ independent of $q_0$. Therefore (since $\tilde{A}$ is determined by $\varepsilon$, $T$, $q_0$), we have
\begin{multline*}
|\cJ_0(a';T',q_0') - \cJ_0(a;T,q_0)| < C_{\varepsilon, T, q_0} \cdot \delta \\ \text{for any}\; |a|, |a'| \le 3 \tilde{A}\;\text{and}\; |a-a'|< \delta |a|
\end{multline*}
and
\[
\cJ_0(a;T,q_0) >  c_{\varepsilon,T, q_0}\;\text{for any} \; |a| \le 3 \tilde{A}.
\]
Combining the last two inequalities gives
\begin{equation}\label{eq: cont lem 7}
\begin{split}
|\cJ_0(a';T',q_0') - \cJ_0(a;T,q_0)| < C_{\varepsilon, T, q_0}& \cdot \delta \cdot \cJ_0(a;T,q_0)\\ &\text{for any}\; |a| \le 2\tilde{A}, |a-a'|<\delta |a|.
\end{split}
\end{equation}
Combining \eqref{eq: cont lem 5}, \eqref{eq: cont lem 6}, \eqref{eq: cont lem 7}, and taking $\delta$ sufficiently small depending on $\varepsilon, T, q_0$ proves \eqref{eq: ka 3}.

We summarize the above discussion (specifically \eqref{eq: known a 6.01}, \eqref{eq: ka 1}, \eqref{eq: ka 2}, and \eqref{eq: ka 3}) as a lemma.

\begin{lem}\label{lem: OS asymp}
The optimal expected cost for known $a$, $\cJ_0$, has the following properties:
\begin{enumerate}
\item
\[
\cJ_0(a;T,q_0) = \kappa(T,a)\cdot q_0^2 + K(T,a).
\]
\item For any $\varepsilon>0$, there exists $A>0$ depending on $\varepsilon$ and $T$ such that
\begin{align}
&|\cJ_0(a;T,q_0) - 2a(q_0^2 + T)| < \varepsilon a (q_0^2+T) \;\text{when}\; a > A,\; \text{and}\\
&\bigg| \cJ_0(a;T,q_0) - \frac{1}{2|a|}(q_0^2+T)\bigg| < \frac{\varepsilon}{|a|}(q_0^2+T)\;\text{when}\; a<-A.
\end{align}
\item Let $\varepsilon>0$. Let $\delta>0$ be sufficiently small depending on $\varepsilon$, $T$, $q_0$, and suppose that $T'>0$ and $q_0' \in \R$ satisfy $|T-T'| < \delta$ and $|q_0 - q_0'| < \delta$. Then for any $a \in \R$ we have
\[
|\cJ_0(a';T',q_0') - \cJ_0(a;T,q_0)| < \varepsilon \cdot \cJ_0(a;T,q_0)\;\text{for any} \; |a- a'|<|a| \delta.
\]
\end{enumerate}
\end{lem}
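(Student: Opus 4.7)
The plan is to assemble the three parts from, respectively, classical LQR theory, direct asymptotic analysis of the explicit formulas for $\kappa(T,a)$ and $K(T,a)$, and a tripartite continuity argument. Most of the necessary computations have already been carried out in the exposition preceding the lemma, so my proof would mostly be a matter of bundling them cleanly.

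For part (1), I would appeal to the standard dynamic programming derivation for the linear--quadratic regulator, recorded e.g.\ in \cite{astrom}. Writing a candidate value function $V(q,t) = P(t)q^2 + g(t)$ and plugging into the Hamilton--Jacobi--Bellman equation forces $P$ to satisfy the Riccati ODE with terminal condition $P(T)=0$, and $g' = -P$ with $g(T)=0$. The substitution $P(t) = \kappa(T-t,a)$ is consistent with the Riccati equation stated just above the lemma, yielding $P(0)=\kappa(T,a)$ and $g(0) = \int_0^T \kappa(T-s,a)\,ds = K(T,a)$; evaluating $V$ at $t=0$, $q=q_0$ gives (1).

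For part (2), I would work directly from the closed-form expression for $K(T,a)$ derived just before the lemma, together with the defining formula for $\kappa$. As $a\to +\infty$ we have $\sqrt{a^2+1} = a + O(1/a)$ and $\tanh(T\sqrt{a^2+1}) = 1 + O(e^{-cTa})$, so $\sqrt{a^2+1} - a\tanh(T\sqrt{a^2+1}) = \frac{1}{2a}(1+o(1))$, giving $\kappa(T,a) = 2a(1+o(1))$ and, by inspecting the three summands of the closed form for $K$, also $K(T,a) = 2aT(1+o(1))$. As $a\to -\infty$ the denominator in $\kappa$ becomes $|a|+\sqrt{a^2+1} + O(e^{-cT|a|}) \sim 2|a|$, yielding $\kappa(T,a)\sim 1/(2|a|)$ and $K(T,a)\sim T/(2|a|)$. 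Substituting into (1) produces the stated bounds.

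For part (3) I would split according to the size of $|a|$, choosing a threshold $\tilde A = \tilde A(\varepsilon,T,q_0)$ large. In the regime $a \ge 2\tilde A$ (respectively $a \le -2\tilde A$), I would apply (2) separately to $(a,T,q_0)$ and $(a',T',q_0')$; because $|a-a'|<\delta|a|$, $|T-T'|<\delta$, and $|q_0-q_0'|<\delta$, the leading terms $2a(q_0^2+T)$ (resp.\ $(2|a|)^{-1}(q_0^2+T)$) match up to a multiplicative factor $1 + O(\varepsilon + \delta)$, which after shrinking $\delta$ is bounded by $\varepsilon\cdot \cJ_0(a;T,q_0)$. In the complementary regime $|a|\le 2\tilde A$, the expression from (1) is smooth in $(a,T,q_0)$ and, on the compact set in question, bounded below by a constant $c_{\varepsilon,T,q_0}>0$; an elementary Lipschitz bound then gives the multiplicative estimate once $\delta$ is small depending on $\varepsilon$, $T$, $q_0$, and $\tilde A$. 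The main obstacle is precisely this continuity estimate: since $\cJ_0$ decays like $1/|a|$ as $a\to -\infty$ and grows linearly as $a\to +\infty$, no single uniform Lipschitz estimate can yield a multiplicative bound; the asymptotic analysis in (2) is what lets the three regimes be matched consistently, but care is needed so that $\tilde A$ and all hidden constants depend only on $\varepsilon, T, q_0$ and not on $\delta$, so that $\delta$ is genuinely chosen last.
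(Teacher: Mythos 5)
Your proposal is correct and follows essentially the same route as the paper: part (1) is cited to standard LQR theory, part (2) follows from direct asymptotics of the explicit formulas for $\kappa$ and $K$, and part (3) uses the same tripartite split (two large-$|a|$ regimes handled via part (2), a compact middle regime handled via smoothness and a positive lower bound on $\cJ_0$), with the same care that the threshold $\tilde A$ be chosen before $\delta$.
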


The next lemma says that the strategy $\sigma_\opt(a)$ is indeed optimal for known $a$, i.e., for fixed $a \in \R$ the strategy $\sigma_\opt(a)$ minimizes the quantity $\cJ(\sigma,a;T,q_0)$.

\begin{lem}\label{lem: OS}
Let $\sigma$ be an arbitrary strategy for time horizon $T$ and starting position $q_0$. Then
\[
\cJ(\sigma,a;T,q_0) \ge \cJ_0(a;T,q_0)\;\text{for any}\; a \in \R.
\]
\end{lem}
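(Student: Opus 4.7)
The plan is to give a completion-of-squares argument based on the Riccati ODE satisfied by $\kappa$, which is the textbook route to LQR optimality adapted to our setting.

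Fix $a\in\R$ and write $\phi(t)=\kappa(T-t,a)$, so that $\phi(T)=0$ and, from the Riccati equation recorded just after \eqref{eq: kappa def}, $\phi'(t)=-(1+2a\phi(t)-\phi^2(t))$. First I would apply It\^o's formula to $\phi(t)(q^\sigma(t,a))^2$ using property \ref{s2}, which expresses $q^\sigma$ as a semimartingale with drift $aq^\sigma+u^\sigma$ and martingale part $W$. The It\^o computation yields
\[
d\bigl[\phi(t)(q^\sigma)^2\bigr]=\bigl[\phi'(t)(q^\sigma)^2+2\phi(t)q^\sigma(aq^\sigma+u^\sigma)+\phi(t)\bigr]\,dt+2\phi(t)q^\sigma\,dW(t),
\]
and substituting the Riccati identity lets me rewrite the bracketed drift, after completing the square, as
\[
\bigl(\phi(t)q^\sigma(t,a)+u^\sigma(t,a)\bigr)^2-\bigl((q^\sigma(t,a))^2+(u^\sigma(t,a))^2\bigr)+\phi(t).
\]

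Next I would integrate from $0$ to $T$ and take expectations. Because $\phi$ is continuous on $[0,T]$ and hence bounded, property (S.3) guarantees $\E\int_0^T\phi(t)^2(q^\sigma(t,a))^2\,dt<\infty$, so the stochastic integral $\int_0^T2\phi(t)q^\sigma(t,a)\,dW(t)$ is a genuine martingale with mean zero. Using $\phi(T)=0$ and $\phi(0)=\kappa(T,a)$, together with the definition $K(T,a)=\int_0^T\kappa(T-t,a)\,dt=\int_0^T\phi(t)\,dt$, this rearranges to
\[
\cJ(\sigma,a;T,q_0)=\kappa(T,a)q_0^2+K(T,a)+\E\!\left[\int_0^T\bigl(\phi(t)q^\sigma(t,a)+u^\sigma(t,a)\bigr)^2\,dt\right].
\]
The first two terms are exactly $\cJ_0(a;T,q_0)$ by Lemma~\ref{lem: OS asymp}(1), and the remaining term is nonnegative, which gives the desired bound.

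The only delicate point is the justification that the stochastic integral has expectation zero; everything else is algebraic manipulation of the Riccati ODE and It\^o's formula. I would handle this by noting that (S.3) bounds $\E\int_0^T(q^\sigma)^2$ and that $\phi$ is continuous on a compact interval, so the integrand of $dW$ is in $L^2([0,T]\times\Omega)$ and the martingale condition holds. As a sanity check, equality in the final bound forces $u^\sigma(t,a)=-\phi(t)q^\sigma(t,a)=-\kappa(T-t,a)q^\sigma(t,a)$ almost everywhere, which recovers $\sigma_\opt(a)$ and recomputes $\cJ_0$, consistent with \eqref{eq: known a 6.01}.
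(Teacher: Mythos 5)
Your proof is correct, but it takes a genuinely different route from the paper's. The paper establishes the inequality by a discrete-time dynamic-programming argument: it fixes a mesh $t_\nu=\nu h$ with $h=T/N$, proves by downward induction on $\nu$ a conditional estimate of the form
\[
\E\!\left[\int_{t_\nu}^T (q^2+u^2)\,ds \,\Big|\, \cF_{t_\nu}\right](1+h^{1/100})\ \ge\ \kappa(T-t_\nu,a)(q(t_\nu))^2+K(T-t_\nu,a)-h^{1/100}(T-t_\nu),
\]
controlling numerous pointwise error terms along the way, and then lets $N\to\infty$. You instead give the continuous-time verification argument: apply It\^o's formula to $\kappa(T-t,a)(q^\sigma)^2$, use the Riccati ODE to complete the square in $u^\sigma$, integrate, take expectations, and observe that what remains over $\cJ_0(a;T,q_0)$ is $\E\int_0^T(\kappa(T-t,a)q^\sigma+u^\sigma)^2\,dt\ge0$. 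Your route is shorter, yields an exact identity rather than a one-sided limit (so equality characterizes $\sigma_\opt(a)$ at once), and your handling of the single delicate point---that $\int_0^T 2\kappa(T-t,a)q^\sigma\,dW$ is a true zero-mean martingale because $\kappa(T-\cdot,a)$ is bounded and (S.3) puts $q^\sigma$ in $L^2([0,T]\times\Omega)$---is exactly right. The paper's discretization avoids ever invoking It\^o's isometry for an arbitrary strategy $\sigma$, which is presumably why it was chosen, at the cost of considerably more bookkeeping. Both proofs are valid; yours is the textbook LQR argument and is arguably more transparent.
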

\begin{proof}
We suppose that $N\gg 1$ is a sufficiently large integer depending on $a,T$; we write $c,c',C,C',\dots$ to denote constants determined by $a,T$. The symbols $c,c',C,C',\dots$ may denote different constants in different occurrences.

We set $h = T/N$ and for $\nu = 0,\dots, N$ we set $t_\nu = \nu h$.

By downward induction on $\nu$, we will show that
\begin{multline}\label{eq: os 1}
    \E\bigg[ \int_{t_\nu}^T((q(s))^2 + (u(s))^2)\ ds \bigg| \cF_{t_\nu}\bigg]\cdot (1 + h^{1/100})\\ \ge \kappa(T-t_\nu,a)(q(t_\nu))^2 + K(T-t_\nu,a) - h^{1/100}(T-t_\nu).
\end{multline}
Once we prove \eqref{eq: os 1}, we take $\nu = 0 $ and let $N \rightarrow \infty$ to derive the conclusion of the lemma. So our task is to prove \eqref{eq: os 1}. We begin our induction on $\nu$.

In the base case $\nu = N$, \eqref{eq: os 1} holds since $\kappa(0,a) = K(0,a) =0$.

For the induction step, we fix $\nu$ ($0 \le \nu < N)$, and assume the inductive hypothesis
\begin{multline}\label{eq: os 2}
    \E\bigg[ \int_{t_{\nu+1}}^T ((q(s))^2 + (u(s))^2)\ ds \bigg| \cF_{t_{\nu+1}}\bigg]\cdot (1 + h^{1/100})\\ \ge \kappa(T-t_{\nu+1},a)\cdot (q(t_{\nu+1}))^2 + K(T-t_{\nu+1},a) - h^{1/100}(T-t_{\nu+1}).
\end{multline}
Our goal is then to prove \eqref{eq: os 1} assuming \eqref{eq: os 2}.

Recall that our Brownian motion is denoted by $(W(t))_{t\ge0}$.

We set
\begin{flalign}
    &\Delta W_\nu = W(t_{\nu+1}) - W(t_\nu),\label{eq: os 3}&\\
    &\omega(\nu) = \sup \{ |W(t) - W(t_\nu)| : t \in [t_\nu, t_{\nu+1}]\},\label{eq: os 3.5}\\
    &||u||_\nu = \bigg(\int_{t_\nu}^{t_{\nu+1}} u^2\ ds\bigg)^{1/2},\label{eq: os 4}\\
    &\bar{u}_\nu = \frac{1}{h} \int_{t_\nu}^{t_{\nu+1}} u \ ds\label{eq: os 4.5}.
\end{flalign}
For the rest of the proof, we condition on $\cF_{t_\nu}$, and we write $\E[\cdots]$ to denote the expectation conditioned on $\cF_{t_\nu}$.

We have
\begin{equation}\label{eq: os 5}
    \E[\Delta W_\nu]=0,\quad \E[(\Delta W_\nu)^2] = h, \quad \E[(\omega(\nu))^2]\le C h,
\end{equation}
and
\begin{equation}\label{eq: os 6}
    h | \bar{u}_\nu|^2 \le ||u||_\nu^2;
\end{equation}
more generally,
\begin{equation}\label{eq: os 6.5}
\bigg| \int_{t_\nu}^t u \ ds \bigg| \le h^{1/2} ||u||_\nu\; \text{for}\; t \in [t_\nu, t_{\nu+1}].
\end{equation}
By the definition of a strategy, we have
\begin{multline}\label{eq: os 7}
    q(t) - q(t_\nu) = [W(t) - W(t_\nu)] + a q(t_\nu)(t-t_\nu) + \int_{t_\nu}^t a [ q(s) - q(t_\nu)]\ ds \\ + \int_{t_\nu}^t u\ ds\; \text{for} \; t \in [t_\nu, t_{\nu+1}].
\end{multline}

Setting
\begin{equation}\label{eq: os 8}
    \osc(\nu) = \sup\{ |q(t) - q(t_\nu)| : t \in [t_\nu, t_{\nu+1}]\},
\end{equation}
we deduce that
\[
\osc(\nu) \le \omega(\nu) + |a q(t_\nu)|h + |a| \osc(\nu)\cdot h + h^{1/2} ||u||_\nu.
\]
Since $h$ is less than a small enough constant determined by $a$, we may absorb the term $|a| \osc(\nu)h$ into the left-hand side above, to conclude that
\begin{equation}\label{eq: os 9}
    \osc(\nu) \le C \omega(\nu) + C  h\cdot |a q(t_\nu)| + C h^{1/2}||u||_\nu.
\end{equation}
From \eqref{eq: os 8}, \eqref{eq: os 9}, we see that
\[
|q(t) - q(t_\nu)| \le C \omega(\nu) + C h |q(t_\nu)| + C h^{1/2} ||u||_\nu\;\text{for} \; t \in [t_\nu, t_{\nu+1}],
\]
hence
\begin{equation}\label{eq: os 10}
    \int_{t_\nu}^{t_{\nu+1}} q^2\ ds \ge (1- Ch^{1/10})(q(t_\nu))^2h - C h^{9/10}(\omega(\nu))^2 - C h^{19/10}||u||_\nu^2.
\end{equation}
Also, \eqref{eq: os 7}, \eqref{eq: os 8}, \eqref{eq: os 9} yield
\begin{flalign}
& q(t_{\nu+1}) = (1+ah)q(t_\nu) + \Delta W_\nu + \bar{u}_\nu h + \text{ERR}(\nu),\; \text{with}&\label{eq: os 11}\\
& | \text{ERR}(\nu)| \le C h \omega(\nu) + C h^2 |q(t_\nu)| + C h^{3/2}||u||_\nu.\label{eq: os 12}
\end{flalign}
Using \eqref{eq: os 11}, \eqref{eq: os 12}, we estimate
\begin{multline}\label{eq: os 13}
    (q(t_{\nu+1}))^2 = [ (1+ah)q(t_\nu) + \Delta W_\nu + \bar{u}_\nu h ]^2 + (\text{ERR}(\nu))^2 \\+ 2 [(1+ah)q(t_\nu) + \Delta W_\nu + \bar{u}_\nu h] \cdot (\text{ERR}(\nu)).
\end{multline}
We have
\begin{equation}\label{eq: os 14}
    \begin{split}
        |[(1+a&h)q(t_\nu) + \Delta W_\nu + \bar{u}_\nu h ] \cdot (\text{ERR}(\nu))|\\ \le &C[|q(t_\nu)| + \omega(\nu) + h^{1/2}||u||_\nu] \cdot [h\omega(\nu) + h^2|q(t_\nu)| + C h^{3/2}||u||_\nu]\\
        \le & C h^2|q(t_\nu)|^2 + Ch(\omega(\nu))^2 + C h^2 ||u||_\nu^2 + C h^{3/2} |q(t_\nu)|\cdot ||u||_\nu \\&+ C h |q(t_\nu)| \omega(\nu) + C h^{3/2} ||u||_\nu \omega(\nu).
    \end{split}
\end{equation}
Into \eqref{eq: os 14} we substitute the estimates
\begin{flalign*}
    & h^{3/2} |q(t_\nu)|\cdot ||u||_\nu \le C h^{3/2} |q(t_\nu)|^2 + C h^{3/2}||u||_\nu^2,&\\
    &h |q(t_\nu)| \omega(\nu) \le  C h^{5/4} | q(t_\nu)|^2 + C h^{3/4} (\omega(\nu))^2,\;\text{and}\\
    &h^{3/2} ||u||_\nu \omega(\nu) \le C h^{3/2} ||u||_\nu^2 + C h^{3/2} (\omega(\nu))^2.
\end{flalign*}
We find that
\begin{multline*}
    |[(1+ah)q(t_\nu) + \Delta W_\nu + \bar{u}_\nu h ] \cdot \text{ERR}(\nu)|\\ \le C h^{5/4} |q(t_\nu)|^2 + C h^{3/2}||u||_\nu^2 + C h^{3/4}(\omega(\nu))^2.
\end{multline*}
Consequently, \eqref{eq: os 13} implies the estimate
\begin{equation}\label{eq: os 15}
\begin{split}
    (q(t_{\nu+1}))^2 \ge &[(1+ah)q(t_\nu) + \Delta W_\nu + \bar{u}_\nu h]^2 - C h^{5/4} |q(t_\nu)|^2 \\
    &- C h^{3/2} ||u||_\nu^2 - C h^{3/4}(\omega(\nu))^2\\
    = & \{ (1+ah)^2 - C h^{5/4}\} (q(t_\nu))^2 + (\Delta W_\nu)^2 + \bar{u}_\nu^2 h^2 \\
    &+ 2(1+ah) q(t_\nu) \Delta W_\nu + 2(1+ah) q(t_\nu)\bar{u}_\nu h + 2 \Delta W_\nu \bar{u}_\nu h\\
    & - C h^{3/2}||u||_\nu^2 - C h^{3/4}(\omega(\nu))^2.
\end{split}
\end{equation}
Since 
\begin{align*}
|\Delta W_\nu \bar{u}_\nu h | &\le |\Delta W_\nu | \cdot h^{1/2} ||u||_\nu \\
&\le C h^{1/2} | \Delta W_\nu|^2 + C h^{1/2} ||u||_\nu^2\\
& \le C h^{1/2} (\omega(\nu))^2 + C h^{1/2}||u||_\nu^2,
\end{align*}
estimate \eqref{eq: os 15} implies the following:
\begin{equation}\label{eq: os 16}
\begin{split}
(q(t_{\nu+1}))^2 \ge& \{(1+ah)^2 - C h^{5/4}\}(q(t_\nu))^2 + (\Delta W_\nu)^2 \\
&+ 2(1+ah)q(t_\nu)\Delta W_\nu + 2(1+ah)q(t_\nu)\bar{u}_\nu h - C h^{1/2}||u||_\nu^2 \\
&- C h^{1/2}(\omega(\nu))^2.
\end{split}
\end{equation}
Recall that we are conditioning on $\cF_{t_\nu}$, so that $q(t_\nu)$ is deterministic. From \eqref{eq: os 5} and \eqref{eq: os 16} we therefore learn that
\begin{equation}\label{eq: os 17}
    \begin{split}
        \E[(q(t_{\nu+1}))^2] \ge& (1+2ah - C h^{5/4}) (q(t_\nu))^2 + h + 2h(1+ah)q(t_\nu)\E[\bar{u}_\nu]\\
        & - C h^{1/2}\E[||u||_\nu^2] - C h^{3/2},
    \end{split}
\end{equation}
while \eqref{eq: os 5} and \eqref{eq: os 10} yield
\begin{equation}\label{eq: os 18}
\begin{split}
    \E\bigg[ \int_{t_\nu}^{t_{\nu+1}} q^2 \ ds\bigg] \ge & h (1- Ch^{1/10}) (q(t_\nu))^2 - C h^{19/10} \E[||u||_\nu^2]\\
    &- C h^{19/10}.
\end{split}
\end{equation}

We bring the inductive assumption \eqref{eq: os 2} into play. Thanks to \eqref{eq: os 2}, \eqref{eq: os 17}, \eqref{eq: os 18}, we have
\begin{equation*}
    \begin{split}
    \E\bigg[ \int_{t_\nu}^T &(q^2 + u^2)\ ds\bigg]\cdot (1+h^{1/100})\\
    = & \E\bigg[ \int_{t_\nu}^{t_{\nu+1}} q^2 \ ds\bigg] \cdot (1+h^{1/100}) + \E\bigg[ \int_{t_\nu}^{t_{\nu+1}} u^2\ ds\bigg] \cdot( 1+h^{1/100}) \\
    & + \E\bigg[ \E\bigg[ \int_{t_{\nu+1}}^T( q^2 + u^2)\ ds \bigg| \cF_{t_{\nu+1}}\bigg]\cdot (1+h^{1/100})\bigg]\\
    \ge & \{ h (1- Ch^{1/10})(1+h^{1/100}) (q(t_\nu))^2 - C h^{19/10} \E[||u||_\nu^2] - C h^{19/10}\}\\
    & + \{ \E[||u||_\nu^2](1+h^{1/100})\} \\
    &+ \{\kappa(T-t_{\nu+1},a) \E[(q(t_{\nu+1}))^2] + K(T-t_{\nu+1},a) - h^{1/100}(T-t_{\nu+1})\}\\
    \ge & h\Big(1+\frac{1}{2}h^{1/100}\Big)(q(t_\nu))^2 + \E\Big[||u||_\nu^2\Big(1+\frac{1}{2}h^{1/100}\Big)\Big] - C h^{19/10}\\
    & + \kappa(T-t_{\nu+1},a) \Big\{(1+2ah - C h^{5/4})(q(t_\nu))^2 + h + 2h(1+ah)q(t_\nu)\E[\bar{u}_\nu]\\
    &\qquad - C h^{1/2} \E[||u||_\nu^2] - Ch^{3/2}\Big\} + K(T-t_{\nu+1},a) - h^{1/100}(T-t_{\nu+1}),
    \end{split}
\end{equation*}
and consequently,
\begin{equation}\label{eq: os 19}
\begin{split}
    \E\bigg[ \int_{t_\nu}^T (q^2+&u^2)\ ds\bigg](1+h^{1/100}) \\
    \ge & \Big\{\kappa(T-t_{\nu+1},a) + h[2a\kappa(T-t_{\nu+1},a)+1]+\frac{1}{4}h^{101/100}\Big\} (q(t_\nu))^2\\
    & + \E[ ||u||_\nu^2 + 2h(1+ah)\kappa(T-t_{\nu+1},a)q(t_\nu)\bar{u}_\nu]\\
    & + \{ K(T-t_{\nu+1},a) - h^{1/100}(T-t_{\nu+1}) + \kappa(T-t_{\nu+1},a) h - Ch^{3/2}\}
\end{split}
\end{equation}
Now, recalling \eqref{eq: os 6}, we see that
\begin{equation*}
    \begin{split}
        ||u||_\nu^2 + 2h(1+ah)\kappa(T-t_{\nu+1},&a) q(t_\nu)\bar{u}_\nu\\
        \ge &\bar{u}_\nu^2 h + 2h(1+ah)\kappa(T-t_{\nu+1},a)q(t_\nu)\bar{u}_\nu\\
        = &h(\bar{u}_\nu + (1+ah)\kappa(T-t_{\nu+1},a)q(t_\nu))^2 \\
        &- h(1+ah)^2\kappa^2(T-t_{\nu+1},a)(q(t_\nu))^2\\
        \ge & - h(1+ah)^2\kappa^2(T-t_{\nu+1},a)(q(t_\nu))^2.
    \end{split}
\end{equation*}
Therefore, from \eqref{eq: os 19}, we have
\begin{equation}\label{eq: os 20}
    \begin{split}
        \E\bigg[\int_{t_\nu}^T &(q^2+u^2) \ ds\bigg] \cdot (1+h^{1/100})\\
        \ge& \Big\{ \kappa(T-t_{\nu+1},a) + h[1+2a\kappa(T-t_{\nu+1},a) \\
        &\qquad - (1+ah)^2 \kappa^2(T-t_{\nu+1},a)] + \frac{1}{4}h^{101/100}\Big\}(q(t_\nu))^2 \\
        &+ \{ K(T-t_{\nu+1},a) - h^{1/100}(T-t_{\nu+1}) \\
        &\qquad+ \kappa(T-t_{\nu+1},a)h - C h^{3/2}\}.
    \end{split}
\end{equation}
Recall that
\[
\frac{d}{dt}[\kappa(T-t,a)] = - [1+2a\kappa(T-t,a)-\kappa^2(T-t,a)]
\]
and that
\[
\frac{d}{dt}[K(T-t,a)] = - \kappa(T-t,a).
\]
Therefore the expressions in curly brackets on the right in \eqref{eq: os 20} are bounded below, respectively, by $\kappa(T-t_\nu,a)$ and by
\[
K(T-t_\nu) - h^{1/100}(T-t_{\nu+1}) - C h^{3/2}.
\]
Consequently, \eqref{eq: os 20} implies the estimate
\begin{equation}\label{eq: os 21}
\begin{split}
    \E\bigg[ \int_{t_\nu}^T (q^2 + &u^2)\ ds\bigg] \cdot (1+h^{1/100}) \\
    \ge & \kappa(T-t_\nu,a)\cdot(q(t_\nu))^2 + K(T-t_\nu,a)-h^{1/100}(T-t_{\nu+1}) - Ch^{3/2}\\
    \ge & \kappa(T-t_\nu,a)\cdot (q(t_\nu))^2 + K(T-t_\nu,a) - h^{1/100}(T-t_\nu).
\end{split}
\end{equation}
Recalling that $\E[\cdots]$ here denotes expectation conditioned on $\cF_{t_\nu}$, we see that \eqref{eq: os 21} is precisely our desired inequality \eqref{eq: os 1}.

This completes our downward induction on $\nu$, proving the lemma.
\end{proof}

\subsection{The expected cost of simple feedback strategies}
Let $v:[0,T]\rightarrow \R$ be a smooth function and let $\sigma_v$ denote the simple feedback strategy for time horizon $T$ and starting position $q_0$ with gain function $v: [0,T]\rightarrow \R$ (see Section \ref{sec: setup} for the definition of a simple feedback strategy). We fix some $ a\in \R$ and let $q$ denote the particle trajectory $q^{\sigma_v}(\cdot,a)$. Note that $q$ solves the stochastic ODE
\begin{equation}\label{eq: sfsr 1}
dq = (a - v)q dt + dW, \qquad q(0) = q_0
\end{equation}
and that
\begin{equation}\label{eq: sfsr 1.5}
\cJ(\sigma_v, a;T,q_0) = \E\bigg[\int_0^T q^2(\tau)\big(1+v^2(\tau)\big)\ d\tau \bigg].
\end{equation}

Define a (smooth) function $\phi:[0,T]\rightarrow \R$ by
\[
\phi(t)= \int_t^T\big(1+v^2(\tau)\big)\exp\bigg(2 \int_t^\tau \big(a-v(s)\big) \ ds\bigg) d\tau;
\]
$\phi$ solves the ODE
\begin{equation}\label{eq: ode add}
-\phi'(t) = 2 \phi(t)(a-v(t))+1+v^2(t), \qquad \phi(T) = 0.
\end{equation}
By It\^{o}'s Lemma we have
\[
d(q^2\phi) =  [\phi'q^2  +\phi] dt  + (2q \phi)  dq.
\]
Combining this with \eqref{eq: sfsr 1} and \eqref{eq: ode add} gives
\begin{equation}\label{eq: sfsr 2}
d(q^2\phi) = [-(1+v^2)q^2 +\phi] dt +(2\phi q) dW.
\end{equation}
With probability 1, we have
\[
\int_0^T d (q^2 \phi) = q^2(T) \phi(T) -  q^2(0) \phi(0) = - q_0^2 \phi(0);
\]
combining this with \eqref{eq: sfsr 2} gives
\begin{equation}
    \int_0^Tq^2(\tau)(1+v^2(\tau))\ d\tau = \phi(0) q_0^2 + \int_0^T \phi(\tau)\ d\tau + \int_0^T 2 \phi(\tau)q(\tau)\ d W_\tau.
\end{equation}
Taking expectation and using \eqref{eq: sfsr 1.5}, we get
\[
\cJ(\sigma_v,a;T,q_0) = \phi(0) q_0^2 + \int_0^T \phi(\tau)\ d\tau.
\]
We summarize the above result as a lemma.

\begin{lem}\label{lem: lin ctrl formula}
Let $v:[0,T]\rightarrow \R$ be a smooth function and let $\sigma_v$ be the simple feedback strategy with gain function $v$. Then
\[
\cJ(\sigma_v,a;T,q_0) = \phi(0) \cdot q_0^2 + \int_0^T \phi(s)ds,
\]
where
\[
\phi(t) = \int_t^T (1+v^2(s)) \exp\bigg(2\int_t^s(a-v(r))dr\bigg) ds.
\]
\end{lem}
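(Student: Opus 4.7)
The plan is to follow exactly the It\^o-calculus route already sketched in the paragraphs preceding the lemma: introduce an auxiliary deterministic function $\phi$ that cancels the running cost when $q^2 \phi$ is differentiated, so that taking expectation of $\int_0^T d(q^2 \phi)$ converts the stochastic cost into a boundary term.

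First I would record that the particle trajectory $q(\cdot) = q^{\sigma_v}(\cdot,a)$ for the simple feedback strategy satisfies the linear SDE $dq = (a-v(t))q\,dt + dW$ with $q(0) = q_0$, and that the expected cost reads
\[
\cJ(\sigma_v,a;T,q_0) = \E\!\left[\int_0^T q^2(\tau)\bigl(1+v^2(\tau)\bigr)\,d\tau\right].
\]
Then I would define
\[
\phi(t) = \int_t^T \bigl(1+v^2(s)\bigr)\exp\!\left(2\int_t^s\bigl(a-v(r)\bigr)dr\right)ds,
\]
and verify by direct differentiation that $\phi$ solves the backward linear ODE
\[
-\phi'(t) = 2\phi(t)\bigl(a-v(t)\bigr) + 1 + v^2(t),\qquad \phi(T)=0.
\]

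Next I would apply It\^o's lemma to the product $q^2\phi$. Since $\phi$ is deterministic and smooth, and $dq = (a-v)q\,dt + dW$, one obtains
\[
d(q^2\phi) = \phi'\,q^2\,dt + \phi\bigl(2q\,dq + (dq)^2\bigr) = \bigl[\phi'q^2 + 2\phi(a-v)q^2 + \phi\bigr]dt + 2\phi q\,dW.
\]
The backward ODE for $\phi$ makes the bracket collapse to $-(1+v^2)q^2 + \phi$, yielding
\[
d(q^2\phi) = \bigl[-(1+v^2)q^2 + \phi\bigr]dt + 2\phi q\,dW.
\]
Integrating from $0$ to $T$ and using $\phi(T)=0$ and $q(0)=q_0$ gives
\[
-q_0^2\phi(0) = -\int_0^T(1+v^2)q^2\,d\tau + \int_0^T \phi(\tau)\,d\tau + \int_0^T 2\phi(\tau)q(\tau)\,dW_\tau.
\]

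Finally I would take expectations and rearrange. The only step demanding actual care is showing that the stochastic integral is a true martingale with zero mean, because the conclusion relies on $\E\!\bigl[\int_0^T \phi^2(\tau)q^2(\tau)\,d\tau\bigr] < \infty$. This is the one place where the proof is not pure algebra, and I expect it to be the main (modest) obstacle. It follows from boundedness of the deterministic $\phi$ on $[0,T]$ together with axiom (S.3) of the definition of a strategy, which gives $\E[\int_0^T q^2\,d\tau] < \infty$; hence the It\^o integral has zero expectation. Rearranging then yields
\[
\cJ(\sigma_v,a;T,q_0) = \E\!\left[\int_0^T(1+v^2)q^2\,d\tau\right] = \phi(0)q_0^2 + \int_0^T \phi(\tau)\,d\tau,
\]
which is the claimed identity.
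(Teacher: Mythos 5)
Your proof is correct and follows essentially the same route as the paper: define the same auxiliary $\phi$, verify the backward Riccati-type ODE, apply It\^o's lemma to $q^2\phi$, integrate, and take expectation. The one addition you make---explicitly justifying that the It\^o integral has zero mean via boundedness of $\phi$ and condition (S.3)---is a welcome detail that the paper leaves implicit.
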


Let $\alpha \ge 0$ be a real number. We define the \emph{constant gain strategy} $\cg(\alpha)$ to be the simple feedback strategy with the constant gain function $v(t) \equiv \alpha$. We remark that $\cg(\alpha)$ is independent of the time horizon $T$ and the starting position $q_0$.

Lemma \ref{lem: lin ctrl formula} then implies that for any $a \in \R$ we have
\begin{multline}\label{eq: cg 1}
\cJ(\text{\cg}(\alpha),a;T,q_0) = (1+\alpha^2)\Big[ \frac{1}{2(a-\alpha)} \Big[\Big(\frac{e^{2T(a-\alpha)}-1}{2(a-\alpha)}\Big)-T\Big] \\ + \Big(\frac{e^{2T(a-\alpha)}-1}{2(a-\alpha)}\Big)q_0^2 \Big]  \; \text{for any}\;  \alpha \ge 0, \alpha \ne a,
\end{multline}
and for any $a  \ge 0$ we have
\begin{equation}\label{eq: cg 2}
\cJ(\text{CG}(a),a;T,q_0) = (1+a^2)T\Big(q_0^2+\frac{1}{2}T\Big).
\end{equation}
Using \eqref{eq: cg 1} and \eqref{eq: cg 2} we deduce the following corollary.
\begin{cor}\label{cor: CG score}
For $\alpha \ge 0$ and $a \in \R$ we have
    \[
    \cJ(\emph{CG}(\alpha),a;T,q_0) \le \begin{cases}
        C_T (1+\alpha^2)(1+q_0^2) e^{2T(a-\alpha)} &\text{when}\; (a - \alpha)>1/T,\\
        C_T(1+\alpha^2)(1+q_0^2) &\text{when}\; |a - \alpha | \le 1/T,\\
        \frac{(1+\alpha^2)}{2|a-\alpha|}(q_0^2+T) &\text{when}\; (a - \alpha)< -1/T.
    \end{cases}
    \]
\end{cor}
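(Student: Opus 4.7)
The plan is a direct case analysis of the explicit formula \eqref{eq: cg 1}. Writing $x = a - \alpha$, set
\[
\Psi(x) = \frac{e^{2Tx}-1}{2x}, \qquad \Phi(x) = \frac{1}{2x}\bigl[\Psi(x) - T\bigr],
\]
so that $\cJ(\cg(\alpha),a;T,q_0) = (1+\alpha^2)[\Phi(x) + \Psi(x) q_0^2]$ whenever $x \neq 0$; the boundary case $x = 0$, which can only occur when $\alpha = a \ge 0$, is handled separately via \eqref{eq: cg 2}. After factoring out $(1+\alpha^2)$, the corollary reduces to three regime-dependent bounds on $\Phi(x) + \Psi(x) q_0^2$.

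First, for $x > 1/T$, the inequalities $1/x < T$ and $e^{2Tx} \ge e^{2}$ let us absorb the $-1$ in $\Psi$ and the $-T$ in $\Phi$ into a $T$-dependent multiple of $e^{2Tx}$, yielding $\Psi(x) \le C_T e^{2Tx}$ and $\Phi(x) \le C_T e^{2Tx}$; adding these gives the claimed exponential bound. Second, for $|x| \le 1/T$, the argument $2Tx$ lies in $[-2,2]$, so $e^{2Tx}$ is bounded by constants depending only on $T$; a one-line Taylor expansion $e^{2Tx} = 1 + 2Tx + O_T(x^2)$ yields $|\Psi(x) - T| \le C_T |x|$, which cancels the outer $1/(2x)$ in $\Phi$ and gives $|\Phi(x)| \le C_T$ as well. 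The edge case $x = 0$ follows directly from \eqref{eq: cg 2}.

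Third, for $x < -1/T$, we have $e^{2Tx} \in (0,1)$, so $\Psi(x) = (1 - e^{2Tx})/(2|x|) \le 1/(2|x|)$, and in particular $\Psi(x) < T/2 < T$. Hence $\Psi(x) - T$ is negative with $|\Psi(x) - T| \le T$, and
\[
\Phi(x) = \frac{T - \Psi(x)}{2|x|} \le \frac{T}{2|x|}.
\]
Combining with the bound on $\Psi$ produces $\Phi(x) + \Psi(x) q_0^2 \le (T + q_0^2)/(2|x|)$, which matches the third case after restoring the factor $(1+\alpha^2)$.

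The only mildly delicate step is the third regime. Written naively, $\Phi(x) = (1/(2x))[\Psi(x) - T]$ appears to produce a factor of order $1/x^2$ from the $\Psi$ contribution, which would be dangerous near $x = 0$. The saving observation is that for $x < -1/T$ the bracketed quantity has a favorable sign and magnitude at most $T$, so the outer $1/(2x)$ merely converts it into something of order $T/|x|$. Once this sign analysis is carried out, the estimates in all three regimes are elementary, and the constants depend only on $T$ as required.
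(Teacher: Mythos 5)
Your proof is correct and takes essentially the same approach as the paper, which simply asserts the corollary follows from \eqref{eq: cg 1}--\eqref{eq: cg 2}; you have filled in the routine case analysis (including the sign observation in the $x<-1/T$ regime and the separate handling of $\alpha=a$) that the paper leaves implicit.
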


For any $\alpha \in \R$ the strategy $\sigma_\opt(\alpha)$ is a simple feedback strategy with gain function $ \kappa(T-t,\alpha)$. Note that this strategy depends on $T$ but is independent of $q_0$. By Lemma \ref{lem: OS asymp}, and because $\kappa(T-t,a) \le C \max\{1,a\}$ for all $a \in \R$, $t \in [0,T]$, we have $\cJ_0(a;T,q_0) \le C \max\{1,a\}(q_0^2+T)$. We combine this with Lemma \ref{lem: lin ctrl formula} to deduce the following corollary.
\begin{cor}\label{cor: opt alph score}
For any $a \in \R$ the following holds.
\[
\cJ(\sigma_{\emph{opt}}(\alpha),a;T,q_0) \le \begin{cases}
C \max\{1,a\}(q_0^2+T) &\text{if}\; \alpha= a,\\
C_T(1+\alpha^2)e^{2|a|T}(1+q_0^2) &\text{for any}\; \alpha \in \R.
\end{cases}
\]
\end{cor}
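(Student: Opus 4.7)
My proof proposal breaks the corollary into the two stated cases and handles each via Lemma \ref{lem: lin ctrl formula}. The first case ($\alpha = a$) is essentially contained in the discussion immediately preceding the corollary: since $\cJ(\sigma_{\opt}(a),a;T,q_0) = \cJ_0(a;T,q_0) = \kappa(T,a)q_0^2 + K(T,a)$ by Lemma \ref{lem: OS asymp}, and the pointwise bound $\kappa(T-t,a) \le C\max\{1,a\}$ gives both $\kappa(T,a) \le C\max\{1,a\}$ and $K(T,a) = \int_0^T \kappa(s,a)\,ds \le CT\max\{1,a\}$, the first bound follows at once.

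For the general bound I plan to apply Lemma \ref{lem: lin ctrl formula} to the simple feedback strategy $\sigma_{\opt}(\alpha)$, whose gain function is $v(t)=\kappa(T-t,\alpha)$. Writing
\[
\phi(t) = \int_t^T \bigl(1+\kappa^2(T-s,\alpha)\bigr)\exp\!\bigg(2\int_t^s\bigl(a-\kappa(T-r,\alpha)\bigr)\,dr\bigg)\,ds,
\]
my task is to bound both $\phi(0)$ and $\int_0^T\phi(s)\,ds$ by a constant multiple of $(1+\alpha^2)e^{2|a|T}$, with a constant that may depend on $T$.

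Two elementary observations drive the estimate. First, inspection of the explicit formula \eqref{eq: kappa def} shows $\kappa(s,\alpha) \ge 0$ for all $s \ge 0$ and all $\alpha \in \R$: the denominator $\sqrt{\alpha^2+1}-\alpha\tanh(s\sqrt{\alpha^2+1})$ is positive because $|\alpha|/\sqrt{\alpha^2+1}<1$ and $|\tanh|<1$, while the numerator $\tanh(s\sqrt{\alpha^2+1})$ is nonnegative; the same formula also yields $\kappa(s,\alpha) \le C(1+|\alpha|)$. Second, nonnegativity of $\kappa$ lets me discard the $-\kappa$ term in the exponent to obtain $\exp(2\int_t^s(a-\kappa)\,dr) \le \exp(2a(s-t))$, which is at most $e^{2|a|T}$ whether $a$ is positive (trivially) or negative (the exponent is then nonpositive). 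Combining these with $1+\kappa^2(T-s,\alpha) \le C(1+\alpha^2)$ yields $\phi(t) \le C(1+\alpha^2)(T-t)e^{2|a|T}$, so $\phi(0) \le C_T(1+\alpha^2)e^{2|a|T}$ and $\int_0^T\phi(s)\,ds \le C_T(1+\alpha^2)e^{2|a|T}$. Substituting into Lemma \ref{lem: lin ctrl formula} produces $\cJ(\sigma_{\opt}(\alpha),a;T,q_0) \le C_T(1+\alpha^2)e^{2|a|T}(1+q_0^2)$, completing the proof.

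I do not anticipate any serious obstacle: the only subtle point is the nonnegativity of $\kappa$, verified by inspection of its denominator; the rest is a clean substitution into the two cited lemmas and the crude exponential bound on $\phi$.
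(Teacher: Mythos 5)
Your proposal is correct and follows the same route the paper indicates: for $\alpha=a$ you use $\kappa(T-t,a)\le C\max\{1,a\}$ together with $\cJ_0(a;T,q_0)=\kappa(T,a)q_0^2+K(T,a)$ from Lemma \ref{lem: OS asymp}, and for general $\alpha$ you substitute the gain $v(t)=\kappa(T-t,\alpha)$ into Lemma \ref{lem: lin ctrl formula} and bound $\phi$ by discarding $-\kappa\le 0$ in the exponent and using $1+\kappa^2\le C(1+\alpha^2)$. The paper compresses all of this into one sentence before the corollary, but your filled-in details (nonnegativity of $\kappa$ from positivity of its denominator, the crude $e^{2a(s-t)}\le e^{2|a|T}$ bound) are exactly the obvious way to carry out that derivation, so the approaches are essentially identical.
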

We remark that the first upper bound in Corollary \ref{cor: opt alph score} (for $\alpha = a$) is sharp when $a$ is large. The second upper bound is far from sharp unless $a \gg \alpha \gg 1$.

\section{The uncontrolled system}\label{sec: stopping}
Let $T>0$, $q_0 > 0$, and let $a \in \R$ be arbitrary. In this section we consider the dynamics
\begin{equation}\label{eq: stopping 1}
dq = (aq) dt + dW_t,\qquad q(0) = q_0.
\end{equation}
We define a random process
\begin{equation}\label{eq: xt def}
X_t = e^{-at}q(t) - q_0 = \int_0^t e^{-as} dW_s.
\end{equation}
If $a=0$, then $X_t$ is standard Brownian motion. If $a \ne 0$, then $X_t$ is a normal random variable with
\begin{equation}\label{eq: xa}
\E[X_t] = 0, \qquad \var[X_t] = \frac{1-e^{-2at}}{2a}.
\end{equation}
For the remainder of this paper, we adopt the convention that $(1-e^{-2at})/2a$ is equal to $t$ when $a = 0$. With this convention in place, \eqref{eq: xa} holds for all $a \in \R$. As a consequence of the above, for any $\delta >0$ we have
\begin{equation}\label{eq: xb}
    \prob(|X_t| > \delta) \le
    \begin{cases}
    C \exp(-c \delta^2 a)\; \text{for any} \; a \ge 0, t\ge 0,\\
    C \exp(-c \delta^2 / t)\; \text{for any}\; |a| t \le 1/10.
    \end{cases}
\end{equation}
Note that these two cases are not mutually exclusive.

We also note that $X_t$ satisfies the reflection principle, i.e., for any $M>0$ we have
\begin{equation}\label{eq: xc}
\prob\left( \sup_{0 \le s \le t} X_s \ge M\right) = 2\cdot  \prob(X_t \ge M).
\end{equation}
This is true because (1) $X_t$ has almost surely continuous paths, (2) $X_t$ satisfies the strong Markov property, and (3) the random variable $(X_{t+t'}-X_t)$ is equal in distribution to $e^{-at}X_{t'}$, which is symmetric. Examining the proof of the Reflection Principle for Brownian motion in \cite{feller}, one sees that properties (1)--(3) are sufficient to prove \eqref{eq: xc}.

\subsection{Stopping times}
We let $\varepsilon>0$ be a given parameter. We define two stopping times.

\begin{itemize}
\item $\tau_+$ is equal to the first time $t \in(0,T)$ for which $q(t) = (1+\varepsilon)q_0$ if such a time exists and equal to $T$ if no such time exists.
\item  $\tau_-$ is equal to the first time $t \in (0,T)$ for which $q(t) = (1-\varepsilon)q_0$ if such a time exists and equal to $T$ if no such time exists.
\end{itemize}

We will make use of the following claim throughout this section.

\begin{claim}\label{clm: hit}
Let $a \ne 0$ and let $\eta \ge \eta_0$, where $\eta_0>1$ is a sufficiently large absolute constant. Define
\[
t_0 = \frac{\eta \varepsilon}{|a|}.
\]
Suppose that $t_0 < T$ and that $t_0 |a| < 1/10$. Then, provided $\varepsilon$ is sufficiently small depending on $T$, the following hold.
\begin{enumerate}[label=\emph{(\roman*)}]
\item If $a > 0$, then
\[
\eprob(\tau_+>t_0) \le C \exp(-c \varepsilon \eta q_0^2 a).
\]
\item If $a <0$, then
\[
\eprob(\tau_- > t_0) \le C \exp(-c \varepsilon \eta q_0^2 |a|).
\]
\end{enumerate}
\end{claim}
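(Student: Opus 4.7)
The plan is to reduce the hitting-time event to a one-point event at $t = t_0$ via path continuity, and then use the explicit representation $q(t) = e^{at}(q_0 + X_t)$ from \eqref{eq: xt def} to translate into a Gaussian tail for $X_{t_0}$. Since $|a|t_0 = \eta\varepsilon \le 1/10$, the relevant estimate is the second line of \eqref{eq: xb}, which gives $\prob(|X_{t_0}| > \delta) \le C \exp(-c\delta^2/t_0)$; with $\delta$ of order $\eta\varepsilon q_0$ this produces a bound of the form $C\exp(-c\eta\varepsilon q_0^2 |a|)$, matching the statement.

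For part (i), continuity of $q$ together with $q(0) = q_0 < (1+\varepsilon) q_0$ forces: on the event $\{\tau_+ > t_0\}$, we have $q(t) < (1+\varepsilon) q_0$ throughout $[0, t_0]$, so in particular $q(t_0) < (1+\varepsilon) q_0$. Rewriting via \eqref{eq: xt def}, this is
\[
X_{t_0} < q_0\bigl[(1+\varepsilon) e^{-\eta\varepsilon} - 1\bigr].
\]
The next step is an elementary Taylor estimate: using $e^{-x} \le 1 - x + x^2/2$ on $[0, 1/10]$, one obtains $(1+\varepsilon) e^{-\eta\varepsilon} - 1 \le -(\eta-1)\varepsilon + O((\eta\varepsilon)^2) \le -c\eta\varepsilon$ provided $\eta \ge \eta_0$ is a sufficiently large absolute constant and $\eta\varepsilon \le 1/10$. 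Hence $\{\tau_+ > t_0\} \subset \{X_{t_0} < -c\eta\varepsilon q_0\}$, and applying the Gaussian tail bound of \eqref{eq: xb} at $t_0$ yields
\[
\prob(\tau_+ > t_0) \le C \exp\bigl(-c'(\eta\varepsilon q_0)^2/t_0\bigr) = C \exp\bigl(-c'' \eta\varepsilon q_0^2 a\bigr).
\]

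Part (ii) is symmetric. On $\{\tau_- > t_0\}$, continuity and $q_0 > (1-\varepsilon) q_0$ force $q(t_0) > (1-\varepsilon) q_0$, i.e., $X_{t_0} > q_0\bigl[(1-\varepsilon) e^{\eta\varepsilon} - 1\bigr]$; using $e^x \ge 1 + x$ one gets $(1-\varepsilon) e^{\eta\varepsilon} - 1 \ge (\eta - 1)\varepsilon - \eta\varepsilon^2 \ge c \eta\varepsilon$, again for $\eta \ge \eta_0$ and $\eta\varepsilon \le 1/10$. Applying \eqref{eq: xb} at $t_0$ as above gives the $|a|$-version of the bound.

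There is no serious obstacle: the argument is a continuity reduction plus an elementary Taylor expansion plus the Gaussian tail in \eqref{eq: xb}. The only point requiring care is choosing $\eta_0$ large enough that the linear-in-$\eta\varepsilon$ contribution from $e^{\pm\eta\varepsilon}$ dominates both the $\varepsilon$ coming from the $(1 \pm \varepsilon)$ prefactor and the quadratic remainder $(\eta\varepsilon)^2$; once this is arranged, the conclusion follows. There is no need to invoke the reflection principle \eqref{eq: xc} here, since the one-sided pointwise estimate already yields the exponent with the correct linear dependence on $\eta\varepsilon$.
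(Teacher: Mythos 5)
Your proof is correct and follows essentially the same route as the paper: reduce $\{\tau_+>t_0\}$ to the single-time event $q(t_0)<(1+\varepsilon)q_0$, rewrite via $X_{t_0}=e^{-at_0}q(t_0)-q_0$, Taylor-bound the threshold to $-c\eta\varepsilon q_0$ using $\eta\varepsilon=a t_0<1/10$ and $\eta\ge\eta_0$ large, then apply the second line of \eqref{eq: xb}. The only difference is that you spell out the Taylor remainder a bit more explicitly than the paper, which simply absorbs it into absolute constants.
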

\begin{proof}
We will only prove (i); the proof of (ii) is nearly identical. Assume that $a>0$. By our assumption that $t_0 < T$, we have
\begin{equation}
\begin{split}
\prob(\tau_+ > t_0) =& \prob(q(t) < (1+\varepsilon)q_0 \;\text{for all}\; t \in [0,t_0])\\
\le & \prob(q(t_0) < (1+\varepsilon)q_0).
\end{split}
\end{equation}
Let $X_t$ be as in \eqref{eq: xt def}. If $q(t_0) < (1+\varepsilon)q_0$, then (since we assume $a t_0 = \eta \varepsilon<1/10$) we have
\begin{align*}
X_{t_0} &< e^{-a t_0}(1+\varepsilon)q_0 - q_0\\
& \le (1- c \eta \varepsilon)(1+\varepsilon)q_0 - q_0.\\
& \le - c' \eta\varepsilon q_0
\end{align*}
provided $\eta$ is larger than an absolute constant and $\varepsilon$ is smaller than an absolute constant. This shows that
\[
\prob(\tau_+ > t_0) \le \prob(X_{t_0} \le  -c' \eta \varepsilon q_0);
\]
applying \eqref{eq: xb} (and again using the assumption $t_0|a| < 1/10$) proves (i).
\end{proof}

\begin{lem}\label{lem: 2 side pro}
Let $c_0>0$ be a sufficiently small absolute constant. Define
\begin{align*}
t_\emx = c_0 \varepsilon^{1/2}, \qquad a_\etny = \varepsilon^{1/2}, \qquad a_\esml = \varepsilon^{1/4}.
\end{align*}
Then, provided $\varepsilon$ is sufficiently small depending on $T$, the following hold.
\begin{enumerate}[label=\emph{(\roman*)}]
\item For any $a \le a_\etny$,
\[
\eprob((\tau_+ < t_\emx) \;\emph{AND}\; (\tau_+ < \tau_-)) \le C \exp(-c_\varepsilon q_0^2(|a|+1)).
\]
\item For any $a \ge -a_\etny$, 
\[
\eprob((\tau_- < t_\emx)\;\emph{AND}\; (\tau_- < \tau_+)) \le  C \exp(-c_\varepsilon q_0^2 (|a|+1)).
\]
\item For any $|a| \ge  a_\esml$,
\[
\eprob((\tau_+ \ge t_\emx) \;\emph{AND}\; (\tau_- \ge t_\emx)) \le C \exp(-c_\varepsilon q_0^2 |a|).
\]
\item Define a stopping time $\tilde{\tau} = \min\{ \tau_+, \tau_-, t_\emx\}$. Then provided $q_0$ is sufficiently large depending on $\varepsilon$ and $T$ we have
\[
\emph{E}\bigg[ \int_0^{\tilde{\tau}} q^2(t)\ dt\bigg] < C \varepsilon^{1/2} \cJ_0(a;T,q_0)\;\text{for any}\; a \in \R.
\]
\end{enumerate}
\end{lem}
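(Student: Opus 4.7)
My plan is to handle the four parts separately, using the Gaussian martingale representation $X_t = e^{-at}q(t) - q_0 = \int_0^t e^{-as}\,dW_s$ from \eqref{eq: xt def}, for which \eqref{eq: xb}--\eqref{eq: xc} supply Gaussian tail bounds and the reflection principle, together with Claim \ref{clm: hit}.

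For part (i), on the event $\{\tau_+ < t_\mx\}\cap\{\tau_+ < \tau_-\}$, the identity $q(\tau_+)=(1+\varepsilon)q_0$ forces $X_{\tau_+} = e^{-a\tau_+}(1+\varepsilon)q_0 - q_0$. When $|a|\le 1$, since $|a|\tau_+\le c_0\varepsilon$, both $a \ge 0$ (using $e^{-a\tau_+} \ge 1-c_0\varepsilon$) and $a<0$ (using $e^{-a\tau_+}\ge 1$) yield $X_{\tau_+}\ge \tfrac{1}{2}\varepsilon q_0$ for $c_0$ small; hence $\sup_{s\le t_\mx} X_s \ge \tfrac{1}{2}\varepsilon q_0$, and the reflection principle \eqref{eq: xc} combined with $\var X_{t_\mx} = O(t_\mx)$ gives $\prob \le C\exp(-c\varepsilon^{3/2}q_0^2)$, which suffices because $|a|+1\le 2$. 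The hard sub-case is $a\le -1$, where downward drift wins and I need $|a|$ in the exponent. There I invoke the scale function $\psi(q) = \int_0^q e^{|a|s^2}\,ds$ of the diffusion $dq = aq\,dt + dW$ (which satisfies $L\psi = 0$) and optional stopping:
\[
\prob(\tau_+ < \tau_-) = \frac{\psi(q_0) - \psi((1-\varepsilon)q_0)}{\psi((1+\varepsilon)q_0) - \psi((1-\varepsilon)q_0)}.
\]
A Laplace-type comparison of the two integrals at their right endpoint bounds the numerator by $Ce^{|a|q_0^2}/(|a|q_0)$ and the denominator below by $ce^{|a|q_0^2}\cdot e^{2|a|\varepsilon q_0^2}/(|a|q_0)$ whenever $|a|\varepsilon q_0^2 \ge 1$, yielding the ratio $\le Ce^{-c|a|\varepsilon q_0^2}$; and outside that regime the claimed bound $Ce^{-c_\varepsilon q_0^2(|a|+1)}$ is essentially trivial. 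Part (ii) is handled symmetrically, using the scale function $\int_0^q e^{-as^2}\,ds$ in the sub-case $a\ge 1$.

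Part (iii) is a direct application of Claim \ref{clm: hit}. Set $t_0 = \eta_0\varepsilon/|a|$ with the $\eta_0$ of that claim; for $|a|\ge a_\sml = \varepsilon^{1/4}$, the hypotheses $t_0|a| = \eta_0\varepsilon < 1/10$, $t_0 < T$, and $t_0 \le t_\mx = c_0\varepsilon^{1/2}$ all hold for $\varepsilon$ small enough depending on $T$. Thus $\prob(\tau_+ \ge t_\mx) \le \prob(\tau_+ > t_0) \le C\exp(-c\eta_0\varepsilon q_0^2 a)$ when $a\ge a_\sml$, and the analogous bound via $\tau_-$ applies when $a\le -a_\sml$; together these contain the event in (iii).

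For (iv), the pathwise bound $|q(s)|\le (1+\varepsilon)q_0$ on $[0,\tilde\tau]$ gives $\E\int_0^{\tilde\tau}q^2\,dt \le (1+\varepsilon)^2 q_0^2\,\E[\tilde\tau]$. I case-split on $|a|$: if $|a|$ is bounded by a constant depending on $T$, then by Lemma \ref{lem: OS asymp} we have $\cJ_0(a;T,q_0)\ge \kappa(T,a)q_0^2 \ge c_T q_0^2$, and the trivial bound $\E[\tilde\tau]\le t_\mx = c_0\varepsilon^{1/2}$ suffices; if $a\gg 1$, then $\cJ_0\gtrsim aq_0^2$ gives an even stronger bound. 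The delicate case is $a\ll -1$, where $\cJ_0 \sim q_0^2/(2|a|)$ is small. Here Claim \ref{clm: hit}(ii) with $t_0 = \eta_0\varepsilon/|a|$ yields $\E[\tilde\tau]\le t_0 + t_\mx\cdot C\exp(-c\eta_0\varepsilon q_0^2|a|)\le 2t_0 = 2\eta_0\varepsilon/|a|$ once $q_0$ is taken large enough depending on $\varepsilon$ and $T$ to kill the tail, producing $\E\int q^2 \le C\varepsilon q_0^2/|a| \le C\varepsilon\cJ_0 \le C\varepsilon^{1/2}\cJ_0$. The main obstacle I expect is the scale-function Laplace estimate in (i) and (ii), where the constants must be tracked across the regimes $|a|\varepsilon q_0^2 \gtrless 1$ to produce the correct $\varepsilon$-dependence in the exponent; the remaining parts reduce cleanly to Claim \ref{clm: hit} and the reflection principle.
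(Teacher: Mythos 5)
Your proposal is essentially correct. For parts (iii) and (iv) you take the same route as the paper: part (iii) is a direct application of Claim~\ref{clm: hit} after checking its hypotheses hold for $|a|\ge a_\sml$, and part (iv) is the same case split on $a$, with the trivial pathwise bound $\E\int q^2\le Cq_0^2t_\mx$ working whenever $\cJ_0\gtrsim q_0^2$ (Lemma~\ref{lem: OS asymp}) and Claim~\ref{clm: hit} supplying the sharper bound $\E[\tilde\tau]\lesssim \varepsilon/|a|$ when $a\ll -1$.

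The genuine divergence is in parts (i) and (ii). The paper works entirely with the Gaussian martingale $X_t$: for the ``dangerous'' regime $a\le -a_\sml$ (in part (i)), it splits the event as $\{\tau_+<t_0^*\}\cup\{\tau_->t_0^*\}$ for a carefully chosen $t_0^*$ and estimates each piece by \eqref{eq: xb}--\eqref{eq: xc} and Claim~\ref{clm: hit}. You instead handle the bounded regime $|a|\le 1$ by reflection (where $|a|+1\le 2$ makes the claimed form free) and then, for $a\le -1$, pass to the scale function $\psi(q)=\int_0^q e^{|a|s^2}\,ds$ of the uncontrolled diffusion and use the exact hitting-probability formula plus a Laplace comparison of the two integrals. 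I checked your Laplace estimate: the numerator bound $\le e^{|a|q_0^2}/(|a|q_0)$ and denominator bound $\gtrsim e^{|a|q_0^2}e^{2|a|\varepsilon q_0^2}/(|a|q_0)$ are right when $|a|\varepsilon q_0^2\ge 1$, and in the complementary regime the target bound is trivially satisfied once $c_\varepsilon\lesssim\varepsilon$, so the two pieces glue together. The scale-function route buys you the exact two-sided exit probability (so no time splitting into $\{\tau_-<t_0^*\}$ and $\{\tau_+>t_0^*\}$ is needed), at the cost of Laplace-type asymptotics; the paper's reflection argument stays within one toolbox ($X_t$ tails) and reuses Claim~\ref{clm: hit}, which is later needed anyway, so is arguably more economical, but your route is equally valid. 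One small cleanup: when invoking $\prob(\tau_+<\tau_-)$ from the scale formula you should write it as an upper bound on the event $\{\tau_+<t_\mx\}\cap\{\tau_+<\tau_-\}$ rather than an equality, since the paper's $\tau_\pm$ are capped at $T$; this is a one-line fix and does not affect the argument.
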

\begin{proof}
We note that by taking $\varepsilon$ sufficiently small depending on $T$ we can assume that $t_\mx <T$.

We first claim that
\begin{align}
&\prob(\tau_+ < t_\mx) \le C \exp(- c \varepsilon^{3/2} q_0^2)\;\text{for any}\; 0 \le a \le a_\tny,\label{eq: lqstr 1}\\
&\prob(\tau_- < t_\mx) \le C \exp(-c \varepsilon^{3/2}q_0^2)\;\text{for any}\; - a_\tny \le a \le 0.\label{eq: lqstr 2}
\end{align}
Assume that $0 \le a \le a_\tny$. Since $t_\mx <T$ we have
\[
\prob(\tau_+ < t_\mx) = \prob( \exists t \in (0,t_\mx) : q(t) \ge (1+\varepsilon)q_0).
\]
Note that we have $0 \le at \le c_0 \varepsilon$ and thus $e^{-at} \ge (1-c_0 \varepsilon)$ for any $t \in (0,t_\mx)$. Therefore, if $q(t) \ge (1+\varepsilon)q_0$ for any $t \in (0,t_\mx)$, then
\begin{align*}
X_t &= e^{-at}q(t) - q_0\\
&\ge (1-c_0\varepsilon)(1+\varepsilon)q_0-q_0\\
& > \varepsilon q_0/2
\end{align*}
provided $c_0$, $\varepsilon$ are smaller than certain absolute constants. We have shown that
\begin{align*}
\prob(\tau_+ < t_\mx)  \le \prob(\exists t \in (0,t_\mx) : X_t > \varepsilon q_0/2).
\end{align*}
Applying \eqref{eq: xc} and then \eqref{eq: xb} (we use that $t_\mx a \le c_0 \varepsilon <  1/10$) proves \eqref{eq: lqstr 1}. We omit the proof of \eqref{eq: lqstr 2} as it can be easily inferred from the proof of \eqref{eq: lqstr 1}.

We now claim that
\begin{align}
&\prob( \tau_- < t_\mx) \le C \exp(- c \varepsilon^{3/2} q_0^2)\;\text{for any} \; 0 \le a \le a_\sml,\label{eq: lqstr 3}\\
&\prob(\tau_+ < t_\mx) \le C \exp(-c \varepsilon^{3/2} q_0^2)\;\text{for any}\; - a_\sml \le a \le 0.\label{eq: lqstr 4}
\end{align}
Assume that $0 \le a \le a_\sml$. If $q(t) \le (1-\varepsilon)q_0$ for some $t \in (0,t_\mx)$, then 
\[
X_t \le e^{-at} (1-\varepsilon) q_0 - q_0 \le - \varepsilon q_0.
\]
This implies that
\[
\prob(\tau_- < t_\mx) \le \prob(\exists t \in (0,t_\mx) : X_t \le - \varepsilon q_0).
\]
Applying \eqref{eq: xc} and then \eqref{eq: xb} (note that $t_\mx a < c_0 \varepsilon^{3/4} < 1/10$ for any $0 \le a \le a_\sml$) proves \eqref{eq: lqstr 3}. The proof of \eqref{eq: lqstr 4} is very similar; we omit it.

Now assume that $a \ge a_\sml$. We claim that the following estimates hold:
\begin{align}
&\prob(\tau_+ \ge t_\mx)< C \exp(-c \varepsilon q_0^2 a), \label{eq: lqst 3}\\
&\prob((\tau_- < t_\mx) \;\text{AND}\; (\tau_- < \tau_+)) < C \exp(-c \varepsilon^2 q_0^2 a).\label{eq: lqst 4}
\end{align}
Let $\eta_0>0$ be the absolute constant in Claim \ref{clm: hit} and define
\[
\tilde{t}_0 = \frac{\eta_0 \varepsilon}{a}.
\]
By our assumption that $a \ge a_\sml$, we have 
\begin{equation}\label{eq: lqst 5}
\tilde{t}_0 \le \eta_0 \varepsilon^{3/4} < c_0 \varepsilon^{1/2} = t_\mx < T
\end{equation}
provided $\varepsilon$ is smaller than an absolute constant. Therefore
\begin{equation}\label{eq: lqst 6}
\prob(\tau_+ \ge t_\mx) \le \prob(\tau_+ \ge \tilde{t}_0).
\end{equation}
Provided $\varepsilon$ is smaller than an absolute constant we have $\tilde{t}_0 a = \eta_0 \varepsilon <1/10$. Together with Claim \ref{clm: hit},  \eqref{eq: lqst 6} implies \eqref{eq: lqst 3}.

Define
\[
t_0^* = \min\bigg\{ t_\mx, \frac{1}{10a}\bigg\}.
\]
Observe that $t_0^* < T$ and so we have
\[
\prob(\tau_- < t_0^*) = \prob(\exists t \in (0,t_0^*) : q(t) < (1-\varepsilon)q_0).
\]
If $q(t) < (1-\varepsilon)q_0$ for some $t \in (0,t_0^*)$, then (since $a > 0$)
\[
X_t < e^{-at}(1-\varepsilon)q_0-q_0 < - \varepsilon q_0.
\]
Therefore
\[
\prob(\tau_- < t_0^*) \le \prob(\exists t \in (0,t_0^*) : X_t < - \varepsilon q_0).
\]
Combining this with \eqref{eq: xc} gives
\begin{equation*}
\prob(\tau_- < t_0^*) \le  2\cdot \prob(X_{t_0^*} < - \varepsilon q_0)
\end{equation*}
Using \eqref{eq: xb} and the fact that $a t_0^* < 1/10$ gives
\begin{equation}\label{eq: lqst 6.5}
\prob(\tau_- < t_0^*)\le C \exp(-c \varepsilon^2 q_0^2/t_0^*).
\end{equation}

We claim that \eqref{eq: lqst 6.5} implies \eqref{eq: lqst 4} under the assumption that
\begin{equation}\label{eq: lqst 7}
a_\sml \le  a \le \frac{1}{10  c_0 \varepsilon^{1/2}}.
\end{equation}
To see this, note that \eqref{eq: lqst 7} implies that $t_0^* = t_\mx$, and therefore \eqref{eq: lqst 6.5} gives
\begin{equation}
\prob(\tau_- < t_\mx) \le C \exp(-c \varepsilon^{3/2} q_0^2).
\end{equation}
We then use \eqref{eq: lqst 7} to deduce that
\[
\prob(\tau_- < t_\mx) \le C \exp(- c \varepsilon^2 q_0^2 a).
\]
This proves \eqref{eq: lqst 4} assuming \eqref{eq: lqst 7}.

We now prove \eqref{eq: lqst 4} assuming that
\begin{equation}\label{eq: lqst 8}
a > \frac{1}{10 c_0 \varepsilon^{1/2}}.
\end{equation}
Note that this assumption implies that $t_0^* < t_\mx$. We have
\begin{equation}\label{eq: lqst 9}
\begin{split}
\prob((\tau_- < t_\mx) \;\text{AND}\; (\tau_- < \tau_+))  \le \prob(\tau_- < t_0^*)+ \prob(\tau_+ > t_0^*).
\end{split}
\end{equation}
By \eqref{eq: lqst 6.5} (and using that assumption \eqref{eq: lqst 8} implies that $t_0^* = (10a)^{-1}$), we have
\begin{equation}\label{eq: lqst 10}
\prob(\tau_- < t_0^*) \le C \exp(-c \varepsilon^2 q_0^2 a ).
\end{equation}
Observe that $t_0^* < T$ and that $t_0^* a = 1/10$. We can therefore apply Claim \ref{clm: hit} with $\eta = 1/(10\varepsilon)$ to get
\begin{equation}\label{eq: lqst 11}
\prob(\tau_+ > t_0^*) \le C \exp(-c q_0^2 a).
\end{equation}
Combining \eqref{eq: lqst 9}--\eqref{eq: lqst 11} proves \eqref{eq: lqst 4} under assumption \eqref{eq: lqst 8}. This completes the proof of \eqref{eq: lqst 4}.

By mirroring the proofs of \eqref{eq: lqst 3} and \eqref{eq: lqst 4}, we can show that for $a \le -a_\sml$ we have
\begin{align}
& \prob(\tau_- \ge t_\mx) < C \exp(-c \varepsilon q_0^2 |a|),\label{eq: lqstr 7}\\
& \prob((\tau_+ < t_\mx) \;\text{AND}\; (\tau_+ < \tau_-)) < C \exp(-c \varepsilon^2 q_0^2 |a|).\label{eq: lqstr 8}
\end{align}

Combining \eqref{eq: lqstr 1}, \eqref{eq: lqstr 4}, \eqref{eq: lqstr 8} proves (i). Similarly, combining \eqref{eq: lqstr 2}, \eqref{eq: lqstr 3}, \eqref{eq: lqst 4} proves (ii). Equations \eqref{eq: lqst 3} and \eqref{eq: lqstr 7} prove (iii).

We now prove (iv). Note that with probability 1 we have that $\tilde{\tau} \le t_\mx = c_0 \varepsilon^{1/2}$ and that $|q(t)| \le (1+\varepsilon)q_0$ for all $t \in (0,\tilde{\tau})$. Therefore
\begin{equation}\label{eq: lqst 12}
\E\bigg[ \int_0^{\tilde{\tau}} q^2(t)\ dt \bigg] < C q_0^2 \varepsilon^{1/2}.
\end{equation}

By Lemma \ref{lem: OS asymp}, there exists $\tilde{A}>0$ depending only on $T$ such that
\begin{equation}\label{eq: lqst 13}
\cJ_0(a;T,q_0) > \frac{q_0^2}{100|a|}\;\text{for any}\; a < - \tilde{A}
\end{equation}
and
\begin{equation}\label{eq: lqst 14}
\cJ_0(a;T,q_0) > c_T q_0^2 \;\text{for any}\; a > - \tilde{A}.
\end{equation}
Combining \eqref{eq: lqst 12} and \eqref{eq: lqst 14} gives
\begin{equation}\label{eq: lqst 15}
\E\bigg[ \int_0^{\tilde{\tau}} q^2(t)\ dt\bigg] < C_T \varepsilon^{1/2}\cJ_0(a;T,q_0)\;\text{for any}\; a > - \tilde{A}.
\end{equation}
This proves (iv) when $a > - \tilde{A}$.

Now assume that $a < - \tilde{A}$. Let $\eta_0$ be as in Claim \ref{clm: hit} and define
\[
\bar{t}_0 = \frac{\eta_0\varepsilon}{|a|}.
\]
By taking $\varepsilon$ smaller than an absolute constant we can ensure that $\bar{t}_0 < t_\mx$ and that $\bar{t}_0 |a| < 1/10$. Therefore we can apply Claim \ref{clm: hit} to get
\[
\prob(\tilde{\tau} > \bar{t}_0) \le \prob(\tau_- > \bar{t}_0) \le C \exp(-c \varepsilon q_0^2|a|).
\]
Recall that with probability 1 we have $|q(t)|\le (1+\varepsilon)q_0$ for all $t \in [0, \tilde{\tau}]$. Therefore
\begin{align*}
\E\bigg[ \int_0^{\tilde{\tau}} q^2(t)\ dt \bigg] &< C q_0^2 \bar{t}_0 + C q_0^2 t_\mx \prob(\tilde{\tau} > \bar{t}_0)\\
& < \frac{C q_0^2 \varepsilon}{|a|} + C q_0^2 t_\mx \exp(-c \varepsilon q_0^2 |a|).
\end{align*}
Taking $q_0$ sufficiently large depending on $\varepsilon$ and using \eqref{eq: lqst 13} gives
\begin{equation}\label{eq: lqst 16}
\E\bigg[ \int_0^{\tilde{\tau}} q^2(t)\ dt \bigg]  < C\varepsilon \cJ_0(a,T,q_0)\;\text{for any}\; a < - \tilde{A}.
\end{equation}
Combining \eqref{eq: lqst 15} and \eqref{eq: lqst 16} proves (iv).
\end{proof}

We now let $q_0^*>q_0$ be a real number and we define two additional stopping times.

\begin{itemize}
\item $\tau^*_+$ is equal to the first time $t \in(0,T)$ for which $q(t) = q_0^*$ if such a time exists and equal to $T$ if no such time exists.
\item $\tau^*_-$ is equal to the first time $t \in (0,T)$ for which $q(t) = -q_0^*$ if such a time exists and equal to $T$ if no such time exists.
\end{itemize}

\begin{lem}\label{lem: 1 side pro}
Provided $\varepsilon$ is sufficiently small depending on $T$, the following hold. 
\begin{enumerate}[label=\emph{(\roman*)}]
\item For any $a \ge -\varepsilon^{-1/2}$
\[
\eprob( \tau_-^*< \tau_+) \le C_{q_0} \varepsilon^{1/4}.
\]
\item Define a stopping time
\[
\bar{\tau} = \min \{ \tau_+, \tau_-^*\}.
\]
Then
\[
\emph{E} \bigg[ \int_0^{\bar{\tau}} q^2(t)\ dt \bigg] < C_{T,q_0,q_0^*} \varepsilon^{1/4} \;\text{for any}\; a \in \R.
\]
\end{enumerate}
\end{lem}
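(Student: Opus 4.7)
For part~(i), the plan is a scale-function / optional-stopping argument. The function $s(q):=\int_0^q e^{-ar^2}\,dr$ satisfies $\tfrac12 s''(q)+aq\,s'(q)=0$, so by It\^o's formula $s(q(t))$ is a local martingale under the dynamics \eqref{eq: stopping 1}. Since $s$ is monotone and bounded on the compact interval $[-q_0^*,(1+\varepsilon)q_0]$, optional stopping at the bounded time $\bar\tau^{**}:=\min\{\tau_+,\tau_-^*,T\}$ gives $\E[s(q(\bar\tau^{**}))]=s(q_0)$. Decomposing according to where the process stops and bounding $s(q(T))\le s((1+\varepsilon)q_0)$ on $\{\bar\tau^{**}=T\}$ by monotonicity yields
\[
\prob(\tau_-^*<\tau_+)\;\le\;\frac{\int_{q_0}^{(1+\varepsilon)q_0}e^{-ar^2}\,dr}{\int_{-q_0^*}^{(1+\varepsilon)q_0}e^{-ar^2}\,dr}.
\]
For $a\ge 0$ the integrand is $\le 1$, so the numerator is $\le \varepsilon q_0$, while $\int_0^{\min(q_0,\,a^{-1/2})}e^{-ar^2}\,dr\ge e^{-1}\min(q_0,a^{-1/2})$ bounds the denominator below; the ratio is $O(\varepsilon)$. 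For $-\varepsilon^{-1/2}\le a<0$ with $|a|(q_0^*)^2\ge 1$, Laplace's method near $r=-q_0^*$ gives the denominator a lower bound $\gtrsim e^{|a|(q_0^*)^2}/(|a|q_0^*)$, while the numerator is $\le\varepsilon q_0\,e^{|a|(1+\varepsilon)^2 q_0^2}$; since $(q_0^*)^2>(1+\varepsilon)^2q_0^2$ for $\varepsilon$ small, the exponential factor is controlled and the ratio is $O(\varepsilon^{1/2})$. The remaining small-$|a|$ case ($|a|(q_0^*)^2<1$) is handled by bounding the denominator below by $q_0^*$.

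For part~(ii), decompose
\[
\E\!\Big[\int_0^{\bar\tau}q^2\,dt\Big]=\E\!\Big[\int_0^{\bar\tau}q^2\,dt;\,\tau_-^*<\tau_+\Big]+\E\!\Big[\int_0^{\bar\tau}q^2\,dt;\,\tau_+\le\tau_-^*\Big],
\]
and split by the size of $a$. Since $|q(t)|\le q_0^*$ on $[0,\bar\tau]$, the first term is $\le(q_0^*)^2T\cdot\prob(\tau_-^*<\tau_+)$, which for $a\ge-\varepsilon^{-1/2}$ is $\le C_{T,q_0,q_0^*}\varepsilon^{1/4}$ by part~(i). For $a<-\varepsilon^{-1/2}$ the whole expectation is dominated by the unconditional estimate
\[
\E\!\Big[\int_0^T q^2(t)\,dt\Big]=\int_0^T\!\Big(e^{2at}q_0^2+\tfrac{e^{2at}-1}{2a}\Big)dt\le\tfrac{q_0^2+T}{2|a|}\le C_{T,q_0}\varepsilon^{1/2}.
\]
For $a\ge-\varepsilon^{-1/2}$ it remains to bound the second term, and using $q^2\le(q_0^*)^2$ it suffices to show $\E[\bar\tau\,\mathbbm{1}(\tau_+\le\tau_-^*)]\le C_{q_0,q_0^*}\varepsilon^{1/4}$. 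When $a\ge\varepsilon^{-1/4}$, It\^o applied to $q^2$ gives $2a\,\E[\int_0^{\bar\tau}q^2\,dt]+\E[\bar\tau]=\E[q^2(\bar\tau)]-q_0^2\le(q_0^*)^2$, whence $\E[\int q^2]\le(q_0^*)^2/(2a)\le C\varepsilon^{1/4}$. For the intermediate range $-\varepsilon^{-1/2}\le a<\varepsilon^{-1/4}$, observe that over a window of length $\delta\approx\varepsilon^{1/2}$ the mean shift is $|a|\delta q_0\le\varepsilon^{1/4}q_0$, while the noise standard deviation is $\approx\delta^{1/2}\approx\varepsilon^{1/4}$; a Gaussian tail estimate on $q(\delta)-e^{a\delta}q_0$ gives $\prob(q(\delta)\ge(1+\varepsilon)q_0)\ge c_{q_0}>0$, and iterating via the strong Markov property (together with Claim~\ref{clm: hit}) yields $\E[\bar\tau\wedge T]\le C_{q_0,q_0^*}\varepsilon^{1/2}$.

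The main obstacle is the intermediate regime for part~(ii), where the It\^o identity becomes uninformative as $a\to 0$ and Claim~\ref{clm: hit} is not sharp for $|a|=O(1)$. Executing the iteration above requires that, at each step, the conditional configuration $q(k\delta)$ still allow the barrier $(1+\varepsilon)q_0$ to be reached with probability $\ge c_{q_0}$ in the next window of length $\delta$; this uses crucially that $|q|\le q_0^*$ throughout $[0,\bar\tau]$, so that the conditional post-restart distribution stays bounded. Patching together the three regimes $a<-\varepsilon^{-1/2}$, $-\varepsilon^{-1/2}\le a<\varepsilon^{-1/4}$, and $a\ge\varepsilon^{-1/4}$, and tracking how the constants depend on $q_0$ and $q_0^*$, is the main bookkeeping.
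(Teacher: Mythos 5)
Your part~(i) argument via the scale function $s(q)=\int_0^q e^{-ar^2}\,dr$ and optional stopping is a genuinely different and clean route to the exit-probability bound
$\prob(\tau_-^*<\tau_+)\le \bigl[\int_{q_0}^{(1+\varepsilon)q_0}e^{-ar^2}dr\bigr]\big/\bigl[\int_{-q_0^*}^{(1+\varepsilon)q_0}e^{-ar^2}dr\bigr]$; the paper instead runs the reflection principle on $X_t=e^{-at}q(t)-q_0$ and splits $\prob(\tau_-^*<\tau_+)\le\prob(\tau_-^*<t^*)+\prob(\tau_+>t^*)$. Two small caveats in your write-up: for $a\ge 0$ the ratio is not $O(\varepsilon)$ from the bounds as stated (you bound the numerator by $\varepsilon q_0$ and the denominator by $e^{-1}\min(q_0,a^{-1/2})$, giving $e\varepsilon q_0\max(1,q_0\sqrt a)$, which blows up for $a\gg q_0^{-2}$); keep the factor $e^{-aq_0^2}$ in the numerator to get the claimed $O(\varepsilon)$ uniformly. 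Also your negative-$a$ constant depends on $q_0^*$ through $\rho=q_0^*/q_0$, whereas the lemma claims $C_{q_0}$; this is harmless in application but worth noting.

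The real gap is in part~(ii), in the intermediate regime. Your plan is to show that over a single window of length $\delta\approx\varepsilon^{1/2}$ one has $\prob(q(\delta)\ge(1+\varepsilon)q_0)\ge c_{q_0}>0$ and then iterate via the strong Markov property. This does not close, for two reasons. First, the claimed mean-shift bound $|a|\delta q_0\le \varepsilon^{1/4}q_0$ fails on the portion $-\varepsilon^{-1/2}\le a<-\varepsilon^{-1/4}$ of your stated range (there $|a|\delta q_0$ is of order $q_0$), so the Gaussian estimate doesn't apply there as written; this part is fixable by routing $a<-\varepsilon^{-1/4}$ through your ``large negative $a$'' explicit formula, which gives $\le(q_0^2+T)/(2|a|)\le C_{T,q_0}\varepsilon^{1/4}$. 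Second and more fundamentally, a one-window endpoint estimate only yields a \emph{constant} success probability (it is close to $1/2$, since the target $(1+\varepsilon)q_0$ is $\varepsilon q_0$ away and the noise scale is $\varepsilon^{1/4}\gg\varepsilon q_0$), so you genuinely need to iterate; but after a failed window the conditional position $q(k\delta)$ can be anywhere in $(-q_0^*,(1+\varepsilon)q_0)$, and from an interior point far below $q_0$ the next-window exit probability can be exponentially small, so the per-step lower bound $c_{q_0}$ does not hold uniformly along the iteration. The paper sidesteps iteration entirely by applying the reflection principle to $X_t$ on an \emph{adaptive} window $\tilde t_1=\varepsilon\min\{1,1/|a|\}$: since $\prob(\sup_{t\le\tilde t_1}X_t\ge C\varepsilon q_0)=2\prob(X_{\tilde t_1}\ge C\varepsilon q_0)$ and $\prob(X_{\tilde t_1}\ge C\varepsilon q_0)\ge \tfrac12-Cq_0\varepsilon^{1/4}$, one gets $\prob(\tau_+>\tilde t_1)\le Cq_0\varepsilon^{1/4}$ in a \emph{single} window, and then $\E[\int_0^{\bar\tau}q^2]\le C_{q_0,q_0^*}(\tilde t_1+T\prob(\bar\tau>\tilde t_1))\le C_{T,q_0,q_0^*}\varepsilon^{1/4}$. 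You need this reflection step (turning ``endpoint above level with probability $\approx 1/2$'' into ``sup above level with probability $\approx 1$'') or a substitute for it; as written the intermediate case is not established.
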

\begin{proof}
We first prove a claim. Define
\[
\tilde{t}_1 = \varepsilon \min \{1, 1/|a|\}
\]
(when $a=0$ we set $\tilde{t}_1 = \varepsilon$). We claim that
\begin{equation}\label{eq: tau est 2}
\prob(\tau_+ > \tilde{t}_1) \le C q_0 \varepsilon^{1/4}\;\text{for any}\; |a| \le \varepsilon^{-1/2}.
\end{equation}

Assume that $|a| \le \varepsilon^{-1/2}$. By taking $\varepsilon$ sufficiently small depending on $T$ we can assume that $\tilde{t}_1 <T$. We have
\begin{equation*}
\prob(\tau_+ > \tilde{t}_1) = \prob(q(t) < (1+\varepsilon)q_0\;\text{for all}\; t \in [0,\tilde{t}_1]).
\end{equation*}
Note that $|a| \tilde{t}_1 <  \varepsilon$, so if $q(t) < (1+\varepsilon)q_0$ for any $t \in [0,\tilde{t}_1]$, then 
\begin{equation}
\begin{split}
X_{t} &< e^{-a t} (1+\varepsilon)q_0 - q_0\\
& < C \varepsilon q_0.
\end{split}
\end{equation}
Therefore
\begin{equation}
\begin{split}
\prob(\tau_+>\tilde{t}_1) &\le \prob( X_t < C \varepsilon q_0 \;\text{for all}\; t \in [0,\tilde{t}_1])\\
& = 1 - \prob(\exists t \in [0,\tilde{t}_1] : X_t > C \varepsilon q_0)\\
&= 1 - 2 \cdot \prob(X_{\tilde{t}_1} > C \varepsilon q_0),
\end{split}
\end{equation}
where the last equality follows from \eqref{eq: xc}. Recall (see \eqref{eq: xa}) that $X_{\tilde{t}_1}$ is a normal random variable with mean 0 and variance $(1- e^{-2a\tilde{t}_1})/2a$. Also recall that $|a| \tilde{t}_1 <  \varepsilon$, so provided $\varepsilon$ is smaller than some absolute constant the variance of $X_{\tilde{t}_1}$ is bounded above and below by constant multiples of $\tilde{t}_1$. Therefore
\begin{equation}
\begin{split}
\prob(X_{\tilde{t}_1} > C \varepsilon q_0) &\ge \frac{1}{2} - C \int_0^{C \varepsilon q_0/\tilde{t}_1^{1/2}} e^{-t^2/2}\ dt\\
& \ge \frac{1}{2} - \frac{C \varepsilon q_0}{\tilde{t}_1^{1/2}},
\end{split}
\end{equation}
so
\[
\prob(\tau_+ > \tilde{t}_1) \le \frac{C\varepsilon q_0}{\tilde{t}_1^{1/2}}.
\]
Using the assumption $|a| \le \varepsilon^{-1/2}$ and the definition of $\tilde{t}_1$ implies \eqref{eq: tau est 2}.

We now prove (i). Note that for any $t^* \in (0,T)$ we have
\begin{equation}\label{eq: tau est 4}
\prob( \tau_-^*< \tau_+) \le \prob(\tau_-^* < t^*) + \prob(\tau_+>t^*).
\end{equation}

In the event that $q(t) = - q_0^*$ for some $ t\in (0,T)$, then
\[
X_t = - e^{-at}q_0^*-q_0 < - q_0.
\]
Therefore
\begin{align*}
\prob(\tau_-^* < t^*) &= \prob(\exists t \in (0,t^*) : q(t) = - q_0^*)\\
&\le \prob(\exists t \in (0, t^*) : X_t < - q_0).
\end{align*}
Equation \eqref{eq: xc} implies
\begin{equation}\label{eq: tau est 3}
\prob(\tau_-^* < t^*) \le 2 \cdot \prob(X_{t^*}<-q_0).
\end{equation}

We let $\tilde{t}_1$ be as above; note that by definition $\tilde{t}_1 |a| < \varepsilon$ and $1/\tilde{t}_1 > 1 /\varepsilon$. Combining \eqref{eq: tau est 3} and \eqref{eq: xb} gives
\begin{equation*}
\prob(\tau_-^* < \tilde{t}_1) \le C \exp(-c q_0^2 /\varepsilon)\;\text{for}\; |a| \le \varepsilon^{-1/2}.
\end{equation*}
Combining this with \eqref{eq: tau est 4} and \eqref{eq: tau est 2} proves (i) for $|a| \le \varepsilon^{-1/2}$.

Assume that $a \ge \varepsilon^{-1/2}$. Define
\[
\bar{t}_1 = \frac{1}{10a}.
\]
Let $\eta_0$ be as in Claim \ref{clm: hit}. Taking $\varepsilon$ sufficiently small depending on $T$ ensures that $\bar{t}_1 < T$ and that $(10\varepsilon)^{-1}>\eta_0$. We can therefore apply Claim \ref{clm: hit} with $\eta = (10\varepsilon)^{-1}$ to get
\begin{equation}\label{eq: tau est 5}
\prob(\tau_+ > \bar{t}_1) \le C \exp(-c q_0^2 a).
\end{equation}
Taking $\varepsilon$ smaller than an absolute constant gives $1/\bar{t}_1>1$. Combined with \eqref{eq: tau est 3}, \eqref{eq: xb} this gives
\[
\prob(\tau_-^*< \bar{t}_1) \le C \exp(-c q_0^2 a).
\]
By \eqref{eq: tau est 4}, we have therefore shown that
\[
\prob(\tau_-^*<\tau_+) \le C \exp(-cq_0^2 a)\;\text{for any}\; a > \varepsilon^{-1/2}.
\]
This implies (i) for $a > \varepsilon^{-1/2}$ (here we use the assumption that $q_0$ is nonzero).

We now prove (ii). We split the analysis into cases.

\noindent\underline{Case I}: (Large negative $a$)

Assume that $a < - \varepsilon^{-1/2}$. Note that the system \eqref{eq: stopping 1} satisfies
\[
q(t) = q_0 + W(t) + \int_0^t a q(\tau)\ d\tau,
\]
with probability 1. Therefore the expected value of the integral of $q^2$ from time 0 to time $T$ is equal to the expected cost incurred from time $0$ to time $T$ by the constant gain strategy $\cg(0)$, i.e., the strategy that sets $u=0$ (see Section \ref{sec: known a}). Applying Corollary \ref{cor: CG score} with $\alpha = 0$, taking $\varepsilon$ sufficiently small depending on $T$, and using that $\bar{\tau} \le T$ with probability 1,  we have
\[
\E\bigg[\int_0^{\bar{\tau}}q^2(t)\ dt\bigg] \le \frac{q_0^2+T}{2|a|}\;\text{for any}\; a< -\varepsilon^{-1/2},
\]
and therefore
\begin{equation}\label{eq: npro 1}
\E\bigg[\int_0^{\bar{\tau}}q^2(t)\ dt\bigg] < C_{T,q_0} \varepsilon^{1/2}\;\text{for any}\; a < -\varepsilon^{-1/2}.
\end{equation}

\noindent\underline{Case II}: (Large positive $a$)

Assume that $a > \varepsilon^{-1/2}$ and let $\bar{t}_1$ be as above. Since $\bar{\tau} < T$ and since $|q(t)| \le \min\{(1+\varepsilon)q_0,q_0^*\}$ for all $t \in (0, \bar{\tau})$ with probability 1, we have
\begin{equation}\label{eq: npro 8}
\E\bigg[\int_0^{\bar{\tau}}q^2(t)\ dt \bigg] \le C_{q_0,q_0^*}( \bar{t}_1 + T \cdot  \prob(\bar{\tau} > \bar{t}_1)).
\end{equation}
Note that our assumption that $a > \varepsilon^{-1/2}$ implies that $\bar{t}_1 < c\varepsilon^{1/2}$. The event $\bar{\tau} > \bar{t}_1$ implies the event $\tau_+ > \bar{t}_1$, and therefore we combine \eqref{eq: npro 8} with \eqref{eq: tau est 5} to get
\begin{equation}\label{eq: tau est 7}
\E\bigg[\int_0^{\bar{\tau}}q^2(t)\ dt \bigg] < C_{T,q_0,q_0^*} \varepsilon^{1/2}\;\text{for any}\; a > \varepsilon^{-1/2}
\end{equation}
(note that we also use the assumption that $q_0$ is nonzero).

\noindent\underline{Case III}: (Bounded $a$)
Let $\tilde{t}_1$ be as above. Then
\[
\E\bigg[\int_0^{\bar{\tau}}q^2(t)\ dt \bigg] < C_{q_0,q_0^*}( \tilde{t}_1 + T\cdot  \prob(\bar{\tau} > \tilde{t}_1))\; \text{for any}\;  |a| \le \varepsilon^{-1/2}.
\]
The event $\bar{\tau}>\tilde{t}_1$ implies the event $\tau_+>\tilde{t}_1$. Therefore, by \eqref{eq: tau est 2}, we have
\begin{equation}\label{eq: pro 16.5}
\E\bigg[\int_0^{\bar{\tau}}q^2(t)\ dt \bigg] < C_{T,q_0,q_0^*} \varepsilon^{1/4}\; \text{for any}\;  |a| \le \varepsilon^{-1/2}.
\end{equation}

Combining \eqref{eq: npro 1}, \eqref{eq: tau est 7}, and \eqref{eq: pro 16.5} proves (ii).
\end{proof}

\begin{lem}\label{lem: decay}
Assume $a < 0$ and define $\rho = q_0^*/q_0$ (recall that $q_0^*>q_0$ and so $\rho >1$). Then
\[
\emph{Prob}((\tau_+^*<T) \; \emph{OR}\; (\tau_-^* < T)) \le C_{T, \rho}(1+q_0^{-2})  \exp(-c_\rho q_0^2 |a|).
\]
\end{lem}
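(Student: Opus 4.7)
The plan is to bound $\prob(\tau_+^* < T)$ and $\prob(\tau_-^* < T)$ separately and combine them by a union bound. For both probabilities I work with the rescaled process $X_t = e^{-at}q(t) - q_0$ introduced in \eqref{eq: xt def}. Since $a<0$, the event $\{q(t)=q_0^*\}$ is equivalent to $\{X_t = e^{|a|t}q_0^* - q_0\}$, where the right-hand side is increasing in $t$ and bounded below by $(\rho-1)q_0$. Similarly $\{q(t)=-q_0^*\}$ is equivalent to $\{-X_t = e^{|a|t}q_0^* + q_0\}$; because this second threshold exceeds the first one by $2q_0$, running the same argument on $-X_t$ will yield an estimate for $\tau_-^*$ at least as strong as for $\tau_+^*$. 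Below I focus on $\tau_+^*$.

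A one-shot application of the reflection principle \eqref{eq: xc} on the full interval $[0,T]$, together with the Gaussian tail estimate and $\var(X_T) = (e^{2|a|T}-1)/(2|a|)$, delivers
\[
\prob(\tau_+^* < T) \;\le\; 2\,\prob\bigl(X_T \ge (\rho-1)q_0\bigr) \;\le\; 2\exp\!\Bigl(-\frac{(\rho-1)^2 q_0^2 |a|}{e^{2|a|T}-1}\Bigr).
\]
This already gives the correct form $\exp(-c_\rho q_0^2 |a|)$ whenever $|a|T$ is bounded. The main technical obstacle is that once $|a|T$ is large, the denominator $e^{2|a|T}-1$ explodes and this one-shot bound becomes trivial; handling this is the core of the proof and is what forces us to work on the OU relaxation timescale.

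To cover the large-$|a|T$ regime I partition $[0,T]$ into at most $N\le C(1+T|a|)$ sub-intervals $I_k=[t_k,t_{k+1}]$ of length $\Delta \le 1/|a|$. On the $k$-th sub-interval I use monotonicity of $t \mapsto e^{|a|t}q_0^* - q_0$ to weaken the event $\{\exists t \in I_k : q(t)\ge q_0^*\}$ to $\{\sup_{s \in [0,t_{k+1}]} X_s \ge e^{|a|t_k}q_0^* - q_0\}$, and then apply \eqref{eq: xc}:
\[
\prob\bigl(\exists t \in I_k : q(t)\ge q_0^*\bigr) \;\le\; 2\,\prob\bigl(X_{t_{k+1}} \ge e^{|a|t_k}q_0^* - q_0\bigr).
\]
A short calculation with the Gaussian tail shows that the exponent $(\text{threshold})^2/(2\var(X_{t_{k+1}}))$, using $|a|(t_{k+1}-t_k)\le 1$, is bounded below by $c_\rho q_0^2|a|$ with $c_\rho$ depending only on $\rho$; the factors $e^{|a|t_k}$ in the numerator and $e^{2|a|t_{k+1}}$ in the denominator essentially cancel because $t_{k+1}-t_k\le 1/|a|$, and the residual minimum of $(q_0^*-q_0 e^{-|a|t_k})^2$ over $t_k \ge 0$ is $(\rho-1)^2 q_0^2$. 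Summing over $k$ yields $2(1+T|a|)\exp(-c_\rho q_0^2|a|)$.

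Finally I convert this into the clean form stated in the lemma. The prefactor $T|a|$ is absorbed by trading away half of the exponent and invoking the elementary inequality $|a|\,e^{-\frac{1}{2}c_\rho q_0^2|a|}\le 2/(c_\rho q_0^2 e)$, which produces a factor $C_{T,\rho}/q_0^2$ at the cost of replacing $c_\rho$ by $c_\rho/2$. The result has the form $C_{T,\rho}(1+q_0^{-2})\exp(-c'_\rho q_0^2|a|)$. The identical argument applied to $-X_t$ (with the larger threshold $e^{|a|t}q_0^*+q_0$) handles $\tau_-^*$; adding the two bounds finishes the proof. The only genuinely subtle point is the calibration $\Delta \sim 1/|a|$: shorter sub-intervals make the union bound too large, while longer ones destroy the $|a|$-scaling of the per-interval Gaussian exponent.
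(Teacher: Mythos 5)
Your proof is correct and follows essentially the same route as the paper: partition $[0,T]$ into $O(1+|a|T)$ intervals of length $\lesssim 1/|a|$, apply the reflection principle \eqref{eq: xc} and a Gaussian tail bound on each interval (with the crucial observation that the ratio $e^{|a|t_k}/e^{|a|t_{k+1}}$ is bounded below because $|a|\Delta\lesssim 1$), and then absorb the resulting prefactor $\sim|a|T$ into the exponential at the cost of a factor $C_{T,\rho}/q_0^2$. The only cosmetic difference is that the paper bounds the two-sided event $\{|q(t)|\ge q_0^*\}$ directly via $|X_t|$ on each subinterval, whereas you treat $\tau_+^*$ and $\tau_-^*$ separately; these are equivalent.
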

\begin{proof}
Let $N$ be the smallest positive integer such that
\[
10 |a|T \le N.
\]
Note that
\begin{equation}\label{eq: decay lem 1}
N < (1 + 10  |a|T).
\end{equation}
Define $\Delta t = \frac{T}{N}$ (observe that our choice of $N$ ensures that $\Delta t |a| \le \frac{1}{10}$) and $I_j = [j\Delta t, (j+1)\Delta t]$ for $j = 0, \dots, N-1$. Note that 
\begin{equation}\label{eq: decay lem 2}
\prob((\tau_+^*<T) \; \text{OR}\; (\tau_-^* < T)) \le \sum_{j=0}^{N-1} \prob (\exists t\in I_j:|q(t)|\ge q_0^*).
\end{equation}
We claim that
\begin{equation}\label{eq: decay lem 3}
\prob(\exists t \in I_j: |q(t)| \ge q_0^* ) \le C \exp(-c_\rho q_0^2 |a|)\;\text{for any}\; j = 0,\dots, N-1.
\end{equation}
Combining \eqref{eq: decay lem 1}-\eqref{eq: decay lem 3} proves the lemma. Therefore, it just remains to establish \eqref{eq: decay lem 3}.

Since $a< 0$, we have $e^{-at} \ge e^{-aj\Delta t}$ for all $t \in I_j$. Therefore, in the event that $q(t) \ge q_0^*$,
\begin{equation}\label{eq: decay 1}
    X_t = q(t)e^{-at} - q_0 \ge q_0^* e^{-at}\left( 1 - \frac{1}{\rho}e^{at}\right) \ge c_\rho q_0^* e^{-a j \Delta t}
\end{equation}
(we've used that $\rho>1$ and $e^{at} \le 1$). In the event that $q(t) \le - q_0^*$,
\begin{equation}\label{eq: decay 2}
    X_t = q(t) e^{-at} - q_0 \le - q_0^* e^{-a j \Delta t}.
\end{equation}
Combining \eqref{eq: decay 1} and \eqref{eq: decay 2} implies that
\begin{equation}\label{eq: decay lem 4}
    \prob( \exists t \in I_j: |q(t)| \ge q_0^*) \le \prob( \exists t \in I_j : |X_t|\ge c_\rho q_0^* e^{-aj\Delta t}).
\end{equation}
Since $I_j \subset [0, (j+1)\Delta t]$, we have
\begin{multline}\label{eq: decay lem 5}
\prob( \exists t \in I_j : |X_t| \ge c_\rho q_0^* e^{-aj\Delta t}) \\
\le \prob( \exists t \in [0, (j+1) \Delta t] : |X_t| \ge c_\rho q_0^* e^{-aj \Delta t}).
\end{multline}
Equation \eqref{eq: xc} implies that
\begin{multline}\label{eq: decay lem 6}
    \prob( \exists t \in [0, (j+1) \Delta t] : |X_t| \ge c_\rho q_0^* e^{-aj \Delta t}) \\
    = 2\cdot \prob\left(|X_{(j+1)\Delta t}| \ge c_\rho q_0^* e^{-aj\Delta t}\right).
\end{multline}
Recall that $X_{(j+1)\Delta t}$ is a normal random variable with mean $0$ and standard deviation
\[
\left( \frac{e^{2|a|(j+1)\Delta t }-1}{2|a|}\right)^{1/2}
\]
Recall also that our choice of $N$ ensures that $\Delta t |a| < \frac{1}{10}$. Therefore $c < e^{-|a|\Delta t} < 1$, and we have
\[
\frac{c_\rho q_0^* e^{|a|j\Delta t } |a|^{1/2}}{(e^{2|a|(j+1)\Delta t}-1)^{1/2}} = \frac{c_\rho q_0^* e^{-|a| \Delta t}|a|^{1/2}}{(1 - e^{-2|a|(j+1)\Delta t})^{1/2}} \ge c_\rho' q_0 |a|^{1/2}.
\]
Thus, when $|X_{(j+1)\Delta t}| \ge c_\rho q_0^* e^{-aj\Delta t}$, the normal random variable $X_{(j+1)\Delta t}$ is at least $c_\rho' q_0 |a|^{1/2}$ standard deviations from its mean of 0. Therefore
\begin{equation}\label{eq: decay lem 7}
\prob\left( |X_{(j+1)\Delta t}| \ge c_\rho q_0^* e^{-aj \Delta t} \right) \le C \exp( - c_\rho' q_0^2 |a|). 
\end{equation}
Combining \eqref{eq: decay lem 4} - \eqref{eq: decay lem 7} proves \eqref{eq: decay lem 3}, finishing the proof of the lemma.
\end{proof}

\subsection{Estimating the parameter \texorpdfstring{$a$}{a}}\label{sec: abar}
We remain in the setting of the previous section.

In the event that $\tau_+<T$ we define a random variable
\[
\bar{a}_+ = \frac{\log(1+\varepsilon)}{\tau_+}.
\]
Note that $\bar{a}_+$ is an estimate for the parameter $a$ in \eqref{eq: stopping 1}. Similarly, in the event that $\tau_-<T$ we define a random variable
\[
\bar{a}_- = \frac{\log(1-\varepsilon)}{\tau_-}.
\]

Let $X \subset (0,\infty)$ be a subset. We will be interested in estimating the probability of events of the form ``$\tau_+<T$ and $\bar{a}_+ \in X$''. We will abbreviate such events to ``$\bar{a}_+ \in X$''; note that when $\tau_+ = T$ the random variable $\bar{a}_+$ is undefined and so the event $\bar{a}_+ \in X$ is meaningless. Similarly, for $Y \subset (-\infty,0)$ we let ``$\bar{a}_- \in Y$'' denote the event ``$\tau_- < T$ and $\bar{a}_- \in Y$''.

\begin{lem}\label{lem: Asharp}
Let $A^\#> 1$. Then provided $\varepsilon$ is sufficiently small depending on $T$ we have
\[
\eprob(\bar{a}_+ > A^\#) \le C \exp(-c_\varepsilon q_0^2 A^\#)
\]
for any $ |a| \le A^\# /10$.
\end{lem}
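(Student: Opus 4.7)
The plan is to translate the event $\{\bar{a}_+ > A^\#\}$ into a statement about how quickly the uncontrolled trajectory can reach the level $(1+\varepsilon)q_0$, and then to reduce that to a small-time Gaussian deviation estimate for the process $X_t = e^{-at}q(t)-q_0$.

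\smallskip

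\noindent\textbf{Step 1: Rewriting the event.} By definition of $\bar a_+$, the event $\{\bar a_+ > A^\#\}$ means $\tau_+ < T$ and $\tau_+ < t^\ast$, where
\[
t^\ast := \frac{\log(1+\varepsilon)}{A^\#}.
\]
For $\varepsilon$ small depending on $T$ and for $A^\#>1$, we have $t^\ast < T$. So it suffices to bound
\[
\prob(\tau_+ < t^\ast) = \prob\bigl(\exists\, t\in(0,t^\ast): q(t) \ge (1+\varepsilon)q_0\bigr).
\]

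\smallskip

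\noindent\textbf{Step 2: Controlling the exponential factor.} Using $q(t)=e^{at}(X_t+q_0)$, the condition $q(t)\ge(1+\varepsilon)q_0$ is equivalent to $X_t \ge e^{-at}(1+\varepsilon)q_0 - q_0$. For $|a|\le A^\#/10$ and $t\le t^\ast$, we have $|a|t \le \log(1+\varepsilon)/10$, so
\[
e^{-at}(1+\varepsilon) \;\ge\; e^{-|a|t}(1+\varepsilon) \;\ge\; (1+\varepsilon)^{-1/10}(1+\varepsilon) \;=\; (1+\varepsilon)^{9/10}.
\]
Hence $e^{-at}(1+\varepsilon)q_0 - q_0 \ge \bigl((1+\varepsilon)^{9/10}-1\bigr)q_0 \ge c\varepsilon q_0$ for $\varepsilon$ below an absolute constant. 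Consequently,
\[
\prob(\tau_+<t^\ast) \;\le\; \prob\bigl(\exists\, t\in(0,t^\ast): X_t \ge c\varepsilon q_0\bigr).
\]

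\smallskip

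\noindent\textbf{Step 3: Reflection principle and Gaussian tail.} The process $X_t$ satisfies the reflection principle \eqref{eq: xc}, so
\[
\prob\bigl(\exists\, t\in(0,t^\ast): X_t \ge c\varepsilon q_0\bigr) \;=\; 2\,\prob(X_{t^\ast} \ge c\varepsilon q_0).
\]
Since $|a|t^\ast \le \log(1+\varepsilon)/10 \le 1/10$ (for $\varepsilon$ small), the second case of \eqref{eq: xb} applies and gives
\[
\prob(X_{t^\ast}\ge c\varepsilon q_0) \;\le\; C\exp\!\bigl(-c' (\varepsilon q_0)^2/t^\ast\bigr) \;=\; C\exp\!\left(-\frac{c''\varepsilon^2 q_0^2 A^\#}{\log(1+\varepsilon)}\right) \;\le\; C\exp(-c_\varepsilon q_0^2 A^\#),
\]
using $\log(1+\varepsilon)\le\varepsilon$ in the last step, with $c_\varepsilon$ of order $\varepsilon$. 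This gives the desired bound.

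\smallskip

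\noindent\textbf{Main difficulty.} There is essentially no subtle obstacle here; the crucial observation is the simple but delicate balance that forces $|a|t^\ast \le \log(1+\varepsilon)/10$ precisely because we couple the hypothesis $|a|\le A^\#/10$ with the definition $t^\ast = \log(1+\varepsilon)/A^\#$. This is exactly what is needed both to make $X_t\ge c\varepsilon q_0$ a necessary consequence of $q(t)\ge(1+\varepsilon)q_0$ on the whole interval and to place us in the second regime of \eqref{eq: xb} (where the variance of $X_{t^\ast}$ is comparable to $t^\ast$), yielding the Gaussian tail in $q_0^2\varepsilon A^\#$.
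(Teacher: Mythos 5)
Your proof is correct and follows essentially the same route as the paper's: rewrite $\{\bar a_+ > A^\#\}$ as $\{\tau_+ < t^\ast\}$ with $t^\ast = \log(1+\varepsilon)/A^\#$, translate the level-crossing of $q$ into a lower bound $X_t \ge c\varepsilon q_0$ using $|a|t^\ast \le \log(1+\varepsilon)/10$, then apply the reflection principle and the Gaussian tail \eqref{eq: xb}. The only (pleasant) difference is that you unify the $a\ge 0$ and $a<0$ cases via $e^{-at}\ge e^{-|a|t}$, where the paper treats them separately.
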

\begin{proof}
Define
\begin{equation}\label{eq: rv 1}
t_2 = \frac{\log(1+\varepsilon)}{A^\#}.
\end{equation}
By taking $\varepsilon$ sufficiently small depending on $T$ we ensure that $t_2 < T$. Therefore
\begin{equation}\label{eq: Asharp 1}
    \prob(\bar{a}_+ > A^\#) = \prob(\tau_+ < t_2).
\end{equation}

Suppose that $0 \le a \le A^\#/10$. Note that for any $t \in [0,t_2)$ we have
\[
ta < \frac{a}{A^\#}\log(1+\varepsilon) < \frac{\varepsilon}{10},
\]
and thus
\[
e^{-at}  \ge (1 - \varepsilon/10).
\]
Consequently, if $q(t) > (1+\varepsilon)q_0$ for any $t \in [0, t_2)$, then
\begin{align*}
X_t &= e^{-at}q(t) - q_0\\
& > e^{-at}(1+\varepsilon)q_0 - q_0\\
& \ge c \varepsilon q_0.
\end{align*}

Now suppose that $- A^\# / 10 \le a < 0$. If $q(t) > (1+\varepsilon)q_0$ for some $t \in [0, t_2)$, then
\[
X_t = e^{|a|t}q(t) - q_0 > \varepsilon q_0.
\]

We have shown that
\[
\prob(\tau_+ < t_2) \le \prob( \exists t \in [0, t_2) : X_t \ge c \varepsilon q_0)\;\text{for any}\; |a| \le \frac{A^\#}{10}.
\]
Combining this with \eqref{eq: Asharp 1} and \eqref{eq: xc}, we deduce that
\begin{equation}\label{eq: Asharp 1.5}
\begin{split}
\prob(\bar{a}_+ \ge A^\#) &\le \prob( \exists t \in [0, t_2) : X_t \ge c \varepsilon q_0)\\
& = 2 \cdot \prob( X_{t_2} \ge c \varepsilon q_0).
\end{split}
\end{equation}
Since $t_2|a| < C \varepsilon$, we apply \eqref{eq: xb} to get
\[
\prob(X_{t_2} \ge c \varepsilon q_0) \le C \exp( -c \varepsilon^2 q_0^2 / t_2) \le \exp( - c_\varepsilon q_0^2 A^\#).
\]
Combining this with \eqref{eq: Asharp 1.5} proves the lemma.
\end{proof}

\begin{lem}\label{lem: abar 1}
Let $\delta>0$. Then provided $\varepsilon$ is sufficiently small depending on $T$ the following hold.
\begin{enumerate}[label=\emph{(\roman*)}]
\item $\eprob(|a-\bar{a}_+|>\delta a) \le C \exp(-c_{\varepsilon, \delta} q_0^2 a)$ for any $a \ge \varepsilon^{1/2}$.
\item $\eprob(|a-\bar{a}_-|>\delta |a|) \le C \exp(-c_{\varepsilon, \delta} q_0^2 |a|)$ for any $a \le -\varepsilon^{1/2}$.
\end{enumerate}
\end{lem}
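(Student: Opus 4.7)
}
I will sketch part (i); part (ii) is the mirror image, replacing $\tau_+$, $(1+\varepsilon)q_0$, $\bar a_+$, and the upper tail of $X_t$ with $\tau_-$, $(1-\varepsilon)q_0$, $\bar a_-$, and the lower tail. Fix $\delta \in (0,1)$ and $a \ge \varepsilon^{1/2}$. The definition $\bar a_+ = \log(1+\varepsilon)/\tau_+$ shows that the event $\{|a - \bar a_+| > \delta a\} \cap \{\tau_+ < T\}$ splits, via the threshold times
\[
t_0^+ = \frac{\log(1+\varepsilon)}{(1+\delta)a}, \qquad t_0^- = \frac{\log(1+\varepsilon)}{(1-\delta)a},
\]
into the two sub-events $\{\tau_+ < t_0^+\}$ and $\{t_0^- < \tau_+ < T\}$. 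For $a \ge \varepsilon^{1/2}$ we have $t_0^\pm \le C_\delta\, \varepsilon^{1/2}$, so for $\varepsilon$ small depending on $T$ both lie in $(0,T)$, and $|a|\,t_0^\pm \le \log(1+\varepsilon)/(1-\delta) \le 1/10$, placing us in the short-time regime of the tail bound \eqref{eq: xb}.

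For the first sub-event, recall from \eqref{eq: xt def} that $q(t) = e^{at}(q_0 + X_t)$, so $q(t) = (1+\varepsilon)q_0$ is equivalent to $X_t = (1+\varepsilon)q_0 e^{-at} - q_0$. For $t < t_0^+$ the bound $at < \log(1+\varepsilon)/(1+\delta)$ yields
\[
X_t \ge q_0\bigl[(1+\varepsilon)^{\delta/(1+\delta)} - 1\bigr] \ge c_\delta\, \varepsilon\, q_0,
\]
provided $\varepsilon$ is small. Hence $\{\tau_+ < t_0^+\} \subseteq \{\sup_{0 \le t \le t_0^+} X_t \ge c_\delta \varepsilon q_0\}$. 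Applying the reflection principle \eqref{eq: xc} and the short-time tail estimate in \eqref{eq: xb},
\[
\prob(\tau_+ < t_0^+) \le 2\,\prob(X_{t_0^+} \ge c_\delta \varepsilon q_0) \le C \exp\!\bigl(-c_\delta\, \varepsilon^2 q_0^2/t_0^+\bigr) \le C \exp\!\bigl(-c_{\varepsilon,\delta}\, q_0^2\, a\bigr),
\]
where the last step uses $1/t_0^+ \ge c \varepsilon^{-1}(1+\delta) a$.

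For the second sub-event, $\tau_+ > t_0^-$ with $\tau_+ < T$ implies $q(t_0^-) < (1+\varepsilon)q_0$, i.e.\ $X_{t_0^-} \le (1+\varepsilon)q_0 e^{-a t_0^-} - q_0 \le -c_\delta \varepsilon q_0$, where the last inequality uses $a t_0^- = \log(1+\varepsilon)/(1-\delta)$. A single application of \eqref{eq: xb} in the short-time regime gives the identical exponential bound $C\exp(-c_{\varepsilon,\delta}\, q_0^2 a)$. Adding the two contributions finishes (i). The main technical point, and really the only place where one must be careful, is the bookkeeping that guarantees $t_0^\pm < T$ and $|a|t_0^\pm \le 1/10$ for all $a \ge \varepsilon^{1/2}$; both reduce to taking $\varepsilon$ sufficiently small in terms of $T$ and $\delta$, as already hypothesized. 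Part (ii) is proved by the symmetric argument using $q(t) = e^{at}(q_0 + X_t)$ with $a < 0$ and the analogous threshold times derived from $\log(1-\varepsilon)$.
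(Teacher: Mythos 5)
Your proof is correct and follows essentially the same approach as the paper: decompose $\{|a - \bar a_+| > \delta a\}$ into the two sub-events via the thresholds $t_0^\pm$ (the paper's $t_3, t_4$), convert each to a tail event for the auxiliary process $X_t$, then use the reflection principle \eqref{eq: xc} plus the short-time tail bound \eqref{eq: xb} for the upper sub-event and a single application of \eqref{eq: xb} for the lower one. One small bookkeeping slip: the lemma hypothesizes that $\varepsilon$ is small depending only on $T$, but as you acknowledge, your check that $|a| t_0^- = \log(1+\varepsilon)/(1-\delta) \le 1/10$ needs $\varepsilon$ small depending on $\delta$ as well (since $1/(1-\delta)$ blows up as $\delta \to 1$), so your parenthetical ``as already hypothesized'' is not accurate; the paper handles this by first reducing WLOG to $\delta < 1/10$ (for larger $\delta$ the event only shrinks, and the $\delta$-dependence is then absorbed into $c_{\varepsilon,\delta}$), after which the smallness condition depends only on $T$.
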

\begin{proof}
We will just prove (i), as essentially the same argument can be used to prove (ii).

Without loss of generality we assume that $\delta < 1/10$. Note that
\begin{equation}\label{eq: abar 1}
\prob(|a - \bar{a}_+| > \delta a) = \prob( \bar{a}_+ > (1+\delta)a)
+ \prob( 0 < \bar{a}_+ < (1-\delta)a).
\end{equation}
Now define
\[
t_3 = \frac{\log(1+\varepsilon)}{(1+\delta)a}.
\]
By taking $\varepsilon$ sufficiently small depending on $T$ we ensure that $t_3 < T$, and therefore
\begin{equation*}
\prob(\bar{a}_+ > (1+\delta) a)= \prob(\tau_+ < t_3).
\end{equation*}
For any $t \in [0, t_3)$, we have
\[
at \le a t_3 \le \frac{\log(1+\varepsilon)}{1+\delta}.
\]
Therefore, in the event that $q(t)>(1+\varepsilon)q_0$ for some $t \in [0, t_3)$, we have
\[
X_t = e^{-at}q(t) - q_0 \ge q_0\big((1+\varepsilon)^{1- 1/(1+\delta)} - 1\big) = c_{\varepsilon, \delta} q_0.
\]
This implies that
\[
\prob(\bar{a}_+ > (1+\delta) a) \le \prob(\exists t \in (0, t_3) : X_t > c_{\varepsilon, \delta} q_0).
\]
We use \eqref{eq: xc} to deduce that
\[
\prob(\bar{a}_+ > (1+\delta) a) \le 2\cdot \prob(X_{t_3} > c_{\varepsilon,\delta}q_0);
\]
we then note that $t_3 a < C \varepsilon^{1/2} < 1/10$ and apply \eqref{eq: xb} to get
\begin{equation}\label{eq: abar 4}
\prob(\bar{a}_+ > (1+\delta)a) \le C \exp(-c_{\varepsilon, \delta} q_0^2 a).
\end{equation}

Now define
\[
t_4 = \frac{\log(1+\varepsilon)}{(1-\delta)a}.
\]
By taking $\varepsilon$ sufficiently small depending on $T$ we ensure that $t_4 < T$, and therefore
\begin{equation*}
\prob(0 < \bar{a}_+ < (1-\delta)a) \le \prob( \tau_+ > t_4).
\end{equation*}

If $q(t_4) < (1+\varepsilon)q_0$, then
\begin{align*}
X_{t_4} &< e^{-a t_4} (1+\varepsilon)q_0 - q_0\\
& = q_0 ( (1+\varepsilon)^{-\delta/(1-\delta)}-1)\\
& < - c_{\varepsilon, \delta} q_0.
\end{align*}
Therefore
\[
\prob(0 < \bar{a}_+ < (1-\delta)a) \le \prob(X_{t_4} < - c_{\varepsilon,\delta} q_0) ;
\]
applying \eqref{eq: xb} gives
\begin{equation}\label{eq: abar 7}
\prob(0 < \bar{a}_+ < (1-\delta)a) \le  C \exp(-c_{\varepsilon, \delta} q_0^2 a).
\end{equation}

Combining \eqref{eq: abar 1}, \eqref{eq: abar 4}, \eqref{eq: abar 7} proves (i).
\end{proof}

\section{The almost optimal strategy}\label{sec: almost optimal strategy}

Throughout this section we fix a time horizon $T>0$ and a starting position $q_0 > 0$.

We fix constants $C_0, m_0$ as in the definition of an $A$-bounded strategy in Section \ref{sec: setup}. Recall that a strategy $\sigma$ (for time horizon $T$) is $A$-bounded for some $A>0$ if
\[
|u^\sigma(t,a)| \le C_0 A^{m_0}[|q^\sigma(t,a)| + 1]\;\text{for all} \; a \in \R, t\in [0,T].
\]
Throughout this section, we allow all constants and parameters to depend on $C_0,m_0$.  

In this section we prove the following theorem.

\begin{thm}\label{thm: main comp}
    Let $\varepsilon>0$. Then for $\varepsilon_0>0$ sufficiently small depending on $\varepsilon$ and $A>0$ sufficiently large depending on $\varepsilon, \varepsilon_0$, the following holds.

Let $\sigma$ be an $A$-bounded strategy for time horizon $T + \varepsilon_0$ and starting position $q_0$. Then the strategy $\sigma_*$ for time horizon $T$ and starting position $q_0$ specified in Section \ref{sec: def aos} satisfies the following.
\begin{enumerate}
    \item If $a \in [-A,A]$, then
    \[
    \cJ(\sigma_*,a;T,q_0) < \varepsilon + (1+\varepsilon) \cdot \sup\{ \cJ(\sigma,b;T+\varepsilon_0,q_0): |a-b| <  \varepsilon |a|\}.
    \]
    \item If $|a| > A$, then
    \[
    \cJ(\sigma_*,a;T,q_0) < \varepsilon + (1+\varepsilon)\cdot \cJ_0(a;T,q_0).
    \]
\end{enumerate}
\end{thm}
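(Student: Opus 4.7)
The plan is to realize $\sigma_*$ as a branching strategy with a \Prologue\ and a \Main, following the blueprint sketched in the Introduction. During the \Prologue\ we set $u \equiv 0$ and wait for the first of three events: (a) the trajectory hits $(1+\varepsilon_0)q_0$ at a stopping time $\tau_+$; (b) the trajectory hits $-q_\rare$ at a stopping time $\tau_-^*$, where $q_\rare \gg q_0$ is a threshold to be chosen depending on $\varepsilon$; or (c) the clock reaches $T$. In case (c) we stop; in case (b) we execute, from position $-q_\rare$ for remaining time $T-\tau_-^*$, a bounded-regret strategy $\br$ of the sort constructed from $\lqs$ as remarked after Theorem \ref{thm: lqs}. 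In case (a) we form the estimate $\bar{a}_+ = \log(1+\varepsilon_0)/\tau_+$ of Section \ref{sec: abar} and, from position $(1+\varepsilon_0)q_0$ for remaining time $T-\tau_+$, either play $\las$ (if $\bar{a}_+ > A$) or play a modified-and-rescaled version $\bar{\sigma}$ of the given strategy $\sigma$ (if $\bar{a}_+ \le A$). Here $\bar{\sigma}$ is built from $\sigma$ in two steps: first, $\tilde{\sigma}$ is $\sigma$ truncated by switching to $\lqs$ the first time $|q(t)|$ reaches some large threshold $q_0^\#\gg q_0$, preventing runaway cost when $a$ far exceeds $A$; second, apply Remark \ref{rmk: rescale} with $\lambda = 1+\varepsilon_0$ to obtain $\bar{\sigma}$ starting at $(1+\varepsilon_0)q_0$ on horizon $(1+\varepsilon_0)^2(T+\varepsilon_0) \ge T$.

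\textbf{Cost analysis for $a \in [-A,A]$.} I would decompose $\cJ(\sigma_*, a; T, q_0)$ into a \Prologue\ part and a \Main\ part, and handle each Main-Act case. The \Prologue\ cost is bounded uniformly in $a$ by Lemma \ref{lem: 1 side pro}(ii), yielding at most $C_{T,q_0,q_\rare}\varepsilon_0^{1/4}$, which is $<\varepsilon/3$ for $\varepsilon_0$ small. Case (b) has probability $\le C_{q_0}\varepsilon_0^{1/4}$ by Lemma \ref{lem: 1 side pro}(i); the ensuing $\br$ cost is at most $C_{T,q_\rare}\cJ_0(a;T,q_\rare) \le C_{T,q_\rare,A}$, contributing $<\varepsilon/3$ once $\varepsilon_0$ is chosen small enough given $A$. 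In case (a), the sub-event $\{\bar{a}_+ > A\}$ has probability at most $C\exp(-c_{\varepsilon_0}q_0^2 A)$ when $|a|\le A/10$ by Lemma \ref{lem: Asharp}, and Lemma \ref{lem: abar 1} handles $|a|$ closer to $A$; in this sub-event the $\las$ cost is at most $C_{T,q_0}A^2$ by Theorem \ref{thm: las}, so for $A$ large the contribution is again $<\varepsilon/3$. The main contribution is case (a) with $\bar{a}_+ \le A$, in which Remark \ref{rmk: rescale} gives
\[
\cJ(\bar{\sigma}, a; T-\tau_+, (1+\varepsilon_0)q_0) \le (1+\varepsilon_0)^4\, \cJ\bigl(\tilde{\sigma}, (1+\varepsilon_0)^2 a; T+\varepsilon_0, q_0\bigr),
\]
and since $\tilde{\sigma}$ agrees with $\sigma$ except on the event $\{|q^\sigma(t,a)| \ge q_0^\#\}$ (whose probability, for $|a|\le A$, is controlled by a Gr\"onwall bound using the $A$-boundedness of $\sigma$) this can be bounded by $(1+\varepsilon)\sup\{\cJ(\sigma, b; T+\varepsilon_0, q_0): |a-b|<\varepsilon|a|\}$ by choosing $\varepsilon_0$ small so that $|(1+\varepsilon_0)^2 a - a| \le \varepsilon|a|$.

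\textbf{Cost analysis for $|a|>A$.} For $a > A$, Lemma \ref{lem: abar 1}(i) yields $\bar{a}_+$ within a factor $1\pm\delta$ of $a$ with probability at least $1-C\exp(-c_{\varepsilon_0,\delta}q_0^2 a)$; taking $\delta$ small forces $\bar{a}_+ > A$, so we play $\las$ and Theorem \ref{thm: las} delivers cost at most $(1+\varepsilon/2)\cJ_0(a;T,q_0)$. On the complementary event we must still play $\bar{\sigma}$ or $\las$, and here the key observation is that the exponentially small probability $\exp(-c q_0^2 a)$ dominates any polynomially-in-$A$, exponentially-in-$Ta$ cost bound obtained from the $A$-boundedness of $\sigma$ together with the $\tilde{\sigma}$ cutoff, so this contributes $<\varepsilon$ in additive terms. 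For $a < -A$, the system is stable, $\cJ_0(a;T,q_0) \sim (q_0^2+T)/(2|a|)$ by Lemma \ref{lem: OS asymp}, case (b) has probability $\le C_{T,q_\rare/q_0}\exp(-c q_0^2 |a|)$ by Lemma \ref{lem: decay}, and in case (a) the ensuing $\bar{\sigma}$ or $\las$ incurs at worst $O(A^2)$ cost per Theorem \ref{thm: las}, which is easily absorbed into $(1+\varepsilon)\cJ_0$ for $|a|$ large.

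\textbf{Main obstacle.} The delicate step is the ``misclassified'' regime where case (a) occurs but $\bar{a}_+$ poorly estimates the true $a$: most critically, $\bar{a}_+ \le A$ while the true $a \gg A$. Here the only guarantee on $\sigma$ is $|u^\sigma| \le C_0 A^{m_0}(|q|+1)$, which only tames $|q|$ up to a factor $\exp(C A^{m_0}T)$, far larger than any tolerable bound. The cutoff in $\tilde{\sigma}$ at $|q|=q_0^\#$ is the device that forces a switch to $\lqs$ before cost runaway, but proving that this switch happens fast enough on the rare event $\{a \gg A, \bar{a}_+ \le A\}$ requires combining the exponentially small probability of a bad guess (Lemmas \ref{lem: Asharp}, \ref{lem: abar 1}) with the exponential growth time scale of $|q|$ driven by the unknown $a$. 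The whole argument is a ladder of parameter choices --- $\varepsilon_0 \ll \varepsilon$, then $q_\rare \gg 1$ (depending on $\varepsilon,\varepsilon_0,T,q_0$), then $q_0^\# \gg q_\rare$, and finally $A$ large compared to all previous quantities --- and each error term must be driven below $\varepsilon$ in a way that respects this order.
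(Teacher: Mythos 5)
Your architecture --- a Prologue with $u\equiv 0$ producing an estimate $\bar{a}_+$, then branching into $\las$ / a modified-and-rescaled $\bar{\sigma}$ / a bounded-regret strategy at $-q_\rare$ --- matches the paper, as do most of the case estimates. But there is a genuine gap at the classification threshold. You play $\las$ exactly when $\bar{a}_+ > A$ and then invoke the bound $\cJ(\las,\cdot) \le C_{T,q_0}A^2$ from Theorem \ref{thm: las}. For $a$ near $A$ from below, however, $\bar{a}_+$ concentrates around $a$, so $\prob(\bar{a}_+>A)$ is \emph{not} uniformly exponentially small in $A$: Lemma \ref{lem: Asharp} requires $|a|\le A/10$, and the Lemma~\ref{lem: abar 1} bound $\prob(|\bar{a}_+-a|>\delta a)\le C\exp(-c_{\varepsilon_0,\delta}q_0^2 a)$ degenerates as the relative error $\delta = A/a - 1\to 0$. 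So the product $C_{T,q_0}A^2\cdot\prob(\bar{a}_+>A)$ cannot be forced below $\varepsilon$ by taking $A$ large, and your bound fails on $a\in(A/10,A)$. The paper's remedy is to apply Theorem~\ref{thm: las} with parameter $A/100$, so that the near-optimality bound $\cJ(\las,a)\le(1+C\varepsilon)\cJ_0(a)$ already holds for all $a>A/100$, and to set the branching threshold at $\bar{a}\ge A/10$. In the entire buffer band $A/100<a<A$ no probability estimate is then needed at all: both $\las$ (giving $(1+C\varepsilon)\cJ_0(a)$) and $\bar{\sigma}$ (giving $(1+C\varepsilon)\sup\{\cJ(\sigma,b):|a-b|<\varepsilon|a|\}$) fold into the target bound via $\cJ_0\le\cJ(\sigma,\cdot)$ (Lemma~\ref{lem: OS}). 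Probability estimates are invoked only when $|a|<A/100$ or $|a|>A$, where the thresholds are well separated from $a$, so Lemmas~\ref{lem: Asharp} and \ref{lem: abar 1} apply with $\delta$ bounded away from $0$.

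A secondary issue: you bound the case-(b) contribution by ``$C_{T,q_\rare,A}$, contributing $<\varepsilon/3$ once $\varepsilon_0$ is chosen small enough given $A$,'' but your own parameter ladder (and the theorem statement) demands that $\varepsilon_0$ be fixed \emph{before} $A$. The paper avoids the circularity with a multiplicative bound: $q_\rare$ is chosen depending only on $T,q_0$, so $\cJ(\br,a;T,-q_\rare)<C_{T,q_0}\cJ_0(a;T,q_0)$ with a constant independent of $A$; combined with the probability bound $\le C_{q_0}\varepsilon_0^{1/4}$ (Lemma~\ref{lem: 1 side pro}) and $\cJ_0\le\cJ(\sigma,\cdot)$, this yields a contribution absorbable into the target by choosing $\varepsilon_0$ small depending only on $\varepsilon,T,q_0$.
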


We now show that Theorem \ref{thm: main comp} implies Theorem \ref{thm: main simp}. Let $\varepsilon>0$. Let $\varepsilon_0$ be sufficiently small (depending on $\varepsilon$) and let $A$ be sufficiently large (depending on $\varepsilon$,$\varepsilon_0$) so that the conclusion of Theorem \ref{thm: main comp} holds. Note that we can assume that $\varepsilon_0 < \varepsilon$.

Let $\sigma$ be an $A$-bounded strategy for time horizon $T + \varepsilon$ and starting position $q_0$. Since $\varepsilon_0 < \varepsilon$, $\sigma$ is also an $A$-bounded strategy for time horizon $T + \varepsilon_0$ and starting position $q_0$. Moreover,
\begin{equation}\label{eq: mr 1}
\cJ(\sigma, a; T + \varepsilon_0, q_0) \le \cJ(\sigma, a; T + \varepsilon, q_0)\;\text{for any}\; a \in \R.
\end{equation}
Let $\sigma_*$ denote the strategy defined in Section \ref{sec: def aos}. Observe that Part (B) of Theorem \ref{thm: main simp} follows immediately from Part (B) of Theorem \ref{thm: main comp}. To prove Part (A) of Theorem \ref{thm: main simp} we simply combine equation \eqref{eq: mr 1} with Part (B) of Theorem \ref{thm: main comp}. This proves Theorem \ref{thm: main simp}.

The remainder of Section \ref{sec: almost optimal strategy} is devoted to proving Theorem \ref{thm: main comp}. For the remainder of Section \ref{sec: almost optimal strategy} we assume that we are given $\varepsilon>0$ smaller than some constant. We let $\varepsilon_0>0$ be sufficiently small depending on $\varepsilon$, and $A>0$ be sufficiently large depending on $\varepsilon,\varepsilon_0$.

In Section \ref{sec: def aos} we define the strategy $\sigma_*$. First, in Section \ref{sec: prelim aos}, we establish some preliminary results. In Section \ref{sec: proof aos} we prove Theorem \ref{thm: main comp}.

\subsection{Preliminaries}\label{sec: prelim aos}
\subsubsection{Averting disaster when \texorpdfstring{$a$}{a} is large}\label{sec: mbas}

Let $\sigma$ be an $A$-bounded strategy for time horizon $T+\varepsilon_0$ and starting position $q_0$. Because $\sigma$ is $A$-bounded, if $a \gg A$ then with high probability the control $u^\sigma$ will fail to control the particle and so the expected cost of $\sigma$ will be much larger than $\cJ_0(a;T,q_0)$ (see \cite{boundeda}). We now show how to use Theorem \ref{thm: lqs} to obtain a strategy $\tilde{\sigma}$ that performs almost as well as $\sigma$ when $|a| \le A$ and much better than $\sigma$ when $a \gg A$.

By Theorem \ref{thm: lqs},  there exists a number $q_0^*> \max\{1, 2  |q_0|\}$ depending on $\varepsilon, \varepsilon_0$, $T$, and $q_0$, but independent of $A$, such that for any $T' \in [\varepsilon_0, 2T]$ there exists a strategy $\lqs(T')$ for time horizon $T'$ and starting position $q_0^*$ satisfying 
\begin{equation}\label{eq: avert 1}
\cJ(\lqs(T'),a;T',q_0^*) < (1+\varepsilon) \cdot \cJ_0(a; T',q_0^*)\;\text{for any}\; a \in \R.
\end{equation}

We now define the strategy $\tilde{\sigma}$ for time horizon $T$ and starting position $q_0$.

We execute the strategy $\sigma$ from time 0 until time $t^*$, where $t^*$ is equal to the first time $t \in (0,T)$ for which $|q^\sigma(t)|=q_0^*$ if such a time exists and $t^*$ is equal to $T$ if no such time exists. Note that $t^*$ is a stopping time.

If $t^* = T$, then $\tilde{\sigma}$ exercises the same control variable as $\sigma$ at all times $t \in [0,T]$.

If $t^* < T$, then we execute the strategy $\lqs(T+\varepsilon_0- t^*)$ (from \eqref{eq: avert 1}) starting from time $t^*$ and position $|q^{\tilde{\sigma}}(t^*)|=q_0^*$. For any $s \in (0,T)$, we have $(T + \varepsilon_0 - s) > \varepsilon_0$. By taking $\varepsilon_0$ sufficiently small depending on $T$ we ensure that $(T + \varepsilon_0)  < 2 T$ and therefore that the strategy $\lqs(T + \varepsilon_0 - s)$ is well-defined for any $s \in (0,T)$. 

This concludes the definition of the strategy $\tilde{\sigma}$. We now estimate its expected cost.

If we never encounter $|q^{\tilde{\sigma}}| = q_0^*$, i.e., if $t^*  = T$, then the strategies $\sigma$ and $\tilde{\sigma}$ exercise the same control from time 0 to time $T$, hence they incur the same cost.

Suppose instead that we first encounter $|q^{\tilde{\sigma}}| = q_0^*$ at some time $t^* < T$. Until time $t^*$, the strategies $\sigma$, $\tilde{\sigma}$ exercise the same control and incur the same cost. Let $S$ denote the expected cost of $\sigma$ starting from position $q_0^*$ and time $t^*$ and continuing until time $T+\varepsilon_0$ and let $\tilde{S}$ denote the expected cost of $\tilde{\sigma}$ starting from position $q_0^*$ and time $t^*$ and continuing until time $T$. Note that
\begin{align*}
\tilde{S} &= \cJ(\lqs(T+\varepsilon_0-t^*),a;T-t^*,q_0)\\
&< \cJ(\lqs(T+\varepsilon_0-t^*),a;T+\varepsilon_0-t^*,q_0).
\end{align*}
Therefore, by \eqref{eq: avert 1}, 
\[
\tilde{S} < (1+\varepsilon)\cdot \cJ_0(a;T+\varepsilon_0-t^*,q_0).
\]
By Lemma \ref{lem: OS},
\[
\cJ_0(a;T+\varepsilon_0-t^*,q_0) \le S,
\]
and therefore $\tilde{S} \le(1+\varepsilon)\cdot S$.

As a consequence of the above discussion, we have
\begin{equation}\label{eq: mbas 1.3}
\cJ(\tilde{\sigma},a; T,q_0) < (1+\varepsilon)\cdot \cJ(\sigma,a;T+\varepsilon_0,q_0)\;\text{for any}\; a \in \R.
\end{equation}

We now derive another estimate on the expected cost of $\tilde{\sigma}$---this one is much sharper than \eqref{eq: mbas 1.3} when $|a| \gg A$.

From time $0$ until time $t^*$, we execute the strategy $\sigma$ and, by construction, have $|q^{\tilde{\sigma}}(t)| \le q_0^* $ for all $t \in [0,t^*]$. Since $\sigma$ is $A$-bounded, we therefore have
\[
|u^{\tilde{\sigma}}(t)| < C_0 A^{m_0}(|q(t)| + 1)\;\text{for any}\; t \in [0,t^*).
\]
\sloppy Therefore, from time $0$ until time $t^*$ we incur a cost of at most $C_T A^{2m_0} (q_0^*)^2$ with probability 1.

In the event that $t^*< T$, then from time $t^*$ until time $T$ we execute the strategy $\lqs(T+\varepsilon_0-t^*)$ (starting from position $\pm q_0^*$). Therefore, from time $t^*$ until time $T$ we incur an expected cost that is at most $(1+\varepsilon)$ times the expected cost of the optimal known-$a$ strategy $\sigma_\opt(a)$ for time horizon $(T+\varepsilon_0-t^*)$ and starting position $q_0^*$. By Corollary \ref{cor: opt alph score}, this is at most $C_T (q_0^*)^2 \max\{a,1\}$. We therefore have
\begin{equation*}
\cJ(\tilde{\sigma},a;T,q_0) < C_T (q_0^*)^2 \max\{A,a\}^{2m_0} \;\text{for any}\;a\in \R.
\end{equation*}
Since $q_0^*$ is determined by $\varepsilon$, $\varepsilon_0$, $T$, and $q_0$ we have
\begin{equation}\label{eq: mbas 3}
\cJ(\tilde{\sigma},a;T,q_0) < C_{T,q_0,\varepsilon,\varepsilon_0} \max\{A,a\}^{2m_0}\;\text{for any}\; a\in \R.
\end{equation}

\subsubsection{Rescaling strategies}\label{sec: rescaling}

Remark \ref{rmk: rescale} implies the following: Given $\lambda>1$ and a strategy $\sigma$ for time horizon $T$ and starting position $q_0$, we can define a strategy $\sigma_{\lambda}$ for time horizon $\lambda^2 T$ and starting position $\lambda q_0$ that satisfies 
\begin{equation}\label{eq: rs 1}
\cJ(\sigma_{\lambda},a;\lambda^2 T,\lambda q_0)\le \lambda^4\cdot  \cJ(\sigma, \lambda^2 a;T,q_0)\;\text{for any}\; a \in\R.
\end{equation}
Moreover, since $T < \lambda^2 T$, $\sigma_{\lambda}$ is also a strategy for time horizon $T$ and starting position $\lambda q_0$, and we have
\begin{equation}\label{eq: rs 2}
\cJ(\sigma_{\lambda},a; T,\lambda q_0)
\le   \cJ(\sigma_{\lambda},  a; \lambda^2 T,\lambda q_0)\;\text{for any}\; a \in\R.
\end{equation}
Combining \eqref{eq: rs 1}, \eqref{eq: rs 2}, we deduce that
\begin{equation}
\cJ(\sigma_{\lambda},a; T,\lambda q_0)
\le \lambda^4\cdot  \cJ(\sigma, \lambda^2 a;T,q_0)\;\text{for any}\; a \in\R.
\end{equation}

Let $\sigma$ be an $A$-bounded strategy for time horizon $T+\varepsilon_0$ and starting position $q_0$.

By the results of the previous section (specifically, see \eqref{eq: mbas 1.3} and \eqref{eq: mbas 3}),  there exists a strategy $\tilde{\sigma}$ for time horizon $T$ and starting position $q_0$ satisfying
\begin{equation}\label{eq: ing 18}
    \cJ(\tilde{\sigma},a; T,q_0) < (1+\varepsilon)\cdot \cJ(\sigma,a;T+\varepsilon_0,q_0)\;\text{for any}\; a \in \R
\end{equation}
and
\begin{equation}\label{eq: ing 19}
\cJ(\tilde{\sigma},a;T,q_0) < C_{T,q_0,\varepsilon,\varepsilon_0} \cdot \max\{A, a\}^{2m_0}\;\text{for any}\; a \in \R.
\end{equation}

By the discussion above, there then exists a strategy $\tilde{\sigma}_{1+\varepsilon_0}$ for time horizon $T$ and starting position $(1+\varepsilon_0) q_0$ satisfying
\begin{multline}\label{eq: ing 20}
\cJ(\tilde{\sigma}_{1+\varepsilon_0},a;T,(1+\varepsilon_0)q_0) 
< (1+C \varepsilon)  \cdot \cJ(\sigma, (1+\varepsilon_0)^2 a; T+\varepsilon_0, q_0)\\ \text{for any}\; a \in \R
\end{multline}
(recall that $\varepsilon_0$ is sufficiently small depending on $\varepsilon$; in particular, to deduce \eqref{eq: ing 20} we assume that $\varepsilon_0<\varepsilon$) and
\begin{equation}\label{eq: ing 21}
\cJ(\tilde{\sigma}_{1+\varepsilon_0},a;T,(1+\varepsilon_0)q_0) < C_{T,q_0,\varepsilon,\varepsilon_0} \cdot \max\{A,a\}^{2m_0}\; \text{for any} \; a \in \R .
\end{equation}
From equation \eqref{eq: ing 20}, we deduce that
\begin{multline}\label{eq: ing 22}
\cJ(\tilde{\sigma}_{1+\varepsilon_0},a;T,(1+\varepsilon_0)q_0) 
\\ < (1+C \varepsilon)  \cdot \sup\{\cJ(\sigma, b; T+\varepsilon_0, q_0): |a-b| < \varepsilon |a|\}\; \text{for any}\; a \in \R
\end{multline}
(again taking $\varepsilon_0$ sufficiently small depending on $\varepsilon$).

\subsection{Definition of the almost optimal strategy}\label{sec: def aos}

Let $\sigma$ be an $A$-bounded strategy for time horizon $T+\varepsilon_0$ and starting position $q_0$.

To prove the Main Theorem, we must exhibit a strategy $\sigma_*$ for time horizon $T$ and starting position $q_0$ satisfying
\begin{multline}\label{eq: pmt 2}
\cJ(\sigma_*,a;T,q_0) < \varepsilon + (1+C\varepsilon)\cdot \sup \{ \cJ(\sigma, b;T+\varepsilon_0,q_0): |b-a| < \varepsilon |a|\}\\ \text{for any}\; a \in [-A,A]
\end{multline}
and
\begin{equation}\label{eq: pmt 2.5}
    \cJ(\sigma_*,a;T,q_0) < \varepsilon + (1+C\varepsilon)\cdot \cJ_0(a;T,q_0)\;\text{for any}\; |a| > A.
\end{equation}
The definition of $\sigma_*$ requires a bit of setup.

By Theorem \ref{thm: las}, provided $A$ is sufficiently large depending on $\varepsilon,T,q_0$ there exists a strategy $\las$ for time horizon $T$ and starting position $(1+\varepsilon_0)q_0$ such that
\begin{equation}\label{eq: las pre 1}
\begin{split}
\cJ(\las, a; T, (&1+\varepsilon_0)q_0) \\
&< (1+\varepsilon) \cdot \cJ_0(a;T,(1+\varepsilon_0)q_0)\;\text{for any}\; a > \frac{1}{100}A,
\end{split}
\end{equation}
and
\begin{equation}\label{eq: las 2}
\cJ(\las, a; T, (1+\varepsilon_0)q_0) < C_{T,q_0} A^2 \;\text{for any}\; a<\frac{1}{100}A.
\end{equation}
By Lemma \ref{lem: OS asymp}, provided $\varepsilon_0$ is sufficiently small depending on $\varepsilon,T,q_0$ we have
\begin{equation}\label{eq: m1}
| \cJ_0(a; T, (1+\varepsilon_0)q_0) - \cJ_0(a;T,q_0)| < \varepsilon \cdot \cJ_0(a;T,q_0)\;\text{for any}\;a\in\R.
\end{equation}
 Combining \eqref{eq: las pre 1} and \eqref{eq: m1} gives
\begin{equation}\label{eq: las 1}
\begin{split}
\cJ(\las, a; T, (1+\varepsilon_0&)q_0)
\\
&< (1+C\varepsilon) \cdot \cJ_0(a;T,q_0)\;\text{for any}\; a > \frac{1}{100}A.
\end{split}
\end{equation}

By Section \ref{sec: rescaling} (see equations \eqref{eq: ing 21}, \eqref{eq: ing 22}), provided $\varepsilon_0$ is sufficiently small depending on $\varepsilon,T,q_0$ there exists a strategy $\bar{\sigma}$ for time horizon $T$ and starting position $(1+\varepsilon_0)q_0$ such that
\begin{multline}\label{eq: bas 1}
\cJ(\bar{\sigma},a;T,(1+\varepsilon_0)q_0) 
\\< (1+C \varepsilon)  \cdot \sup\{\cJ(\sigma, b; T+\varepsilon_0, q_0): |a-b| < \varepsilon |a|\}\; \text{for any}\; a \in \R
\end{multline}
and
\begin{equation}\label{eq: bas 2}
\cJ(\bar{\sigma},a;T,(1+\varepsilon_0)q_0) < C_{T,q_0,\varepsilon, \varepsilon_0} \max\{A,a\}^{2m_0}\;\text{for any}\;a \in \R.
\end{equation}

Last, we introduce a constant $q_\rare>q_0$ depending on $T,q_0$ and a strategy $\br$ (``BR'' for bounded regret) for time horizon $T$ and starting position $-q_\rare$ satisfying
\[
\cJ(\br, a; T, - q_\rare) < C \cdot \cJ_0(a;T,-q_\rare)\;\text{for any}\; a \in \R.
\]
This is possible due to Theorem \ref{thm: lqs}; see, in particular, the discussion of bounded regret strategies following the statement of Theorem \ref{thm: lqs}. We then observe, by Remark \ref{cor: q_0s}, that
\[
\cJ_0(a; T, -q_\rare) < C_{T,q_0} \cdot \cJ_0(a; T, q_0)\;\text{for any} \; a \in \R.
\]
This implies that the strategy $\br$ satisfies
\begin{equation}\label{eq: br}
\cJ(\br,a;T,-q_\rare) < C_{T,q_0} \cdot \cJ_0(a; T, q_0)\;\text{for any} \; a \in \R.
\end{equation}

We fix the strategies $\las,\bar{\sigma}, \br$ for the remainder of Section \ref{sec: almost optimal strategy}. We are now ready to define the strategy $\sigma_*$.

The strategy $\sigma_*$ consists of two epochs: the Prologue and the Main Act. Only the Prologue is guaranteed to occur. In fact, for large negative $a$ we do not expect the Main Act to occur.

For the remainder of Section \ref{sec: almost optimal strategy} we write $q,u$ to denote, respectively, the particle trajectories $q^{\sigma_*}$ and the control variables $u^{\sigma_*}$.

\underline{\Prologue}: During the Prologue we set $u=0$. We define a stopping time $\tau_1$ by setting $\tau_1$ equal to the first time $t \in (0,T)$ for which either $q(t) = (1+\varepsilon_0)q_0$ or $q(t) = - q_\rare$ if such a time exists and setting $\tau_1$ to be equal to $T$ if no such time exists. The Prologue lasts from time $0$ until time $\tau_1$. If $\tau_1 = T$ then at time $\tau_1$ the game ends along with the Prologue. If $\tau_1 < T$, then at time $\tau_1$ we enter the Main Act.

\underline{\Main}: Suppose that we enter the Main Act at some time $\tau_1 \in (0,T)$. We either have $q(\tau_1) = (1+\varepsilon_0)q_0$ or $q(\tau_1) = -q_\rare$; we proceed differently in each case. If $q(\tau_1) = (1+\varepsilon_0)q_0$, then we define
\[
\bar{a} = \frac{\log(1+\varepsilon_0)}{\tau_1}.
\]
The value of $\bar{a}$ then determines our strategy during the Main Act. Note that in the event that $\bar{a}$ is defined (i.e., $\tau_1<T$ and $q(\tau_1) = (1+\varepsilon_0)q_0$) we have $\bar{a}>0$ with probability 1. If $\bar{a}\ge A/10$, then we play strategy $\las$ beginning at time $\tau_1$ and position $q(\tau_1) = (1+\varepsilon_0)q_0$ until the game ends at time $T$. If $\bar{a}<A/10$, then we play strategy $\bar{\sigma}$ beginning at time $\tau_1$ and position $q(\tau_1) = (1+\varepsilon_0)q_0$ until the game ends at time $T$. This completes the description of our strategy during the Main Act in the event that $q(\tau_1) = (1+\varepsilon_0)q_0$. If $q(\tau_1) = - q_\rare$, then we play strategy $\br$ beginning at time $\tau_1$ and position $q(\tau_1) = - q_\rare$ until the end of the game at time $T$.

This concludes the definition of the strategy $\sigma_*$. 

\subsection{Proof of Theorem \ref{thm: main comp}}\label{sec: proof aos}

Recall that the Prologue ends at time $\tau_1$ and that during the Prologue we set $u=0$. We define a random variable $\cost_P$ to be the cost incurred during the Prologue for a given realization of the noise, i.e., 
\[
\cost_P= \int_0^{\tau_1}q^2(t)\ dt.
\]
Note that $\cost_P$ depends on $a$.

Taking $\varepsilon_0$ sufficiently small depending on $T$, we see from Lemma \ref{lem: 1 side pro} that
\begin{equation}\label{eq: npro main}
\E[\cost_P] < C_{T,q_0}  \varepsilon_0^{1/4}\;\text{for any} \; a \in \R
\end{equation}
(recall that $q_\rare$ is determined by $T,q_0$) and that
\begin{equation}\label{eq: main 0.4}
\prob(q(\tau_1) = - q_\rare) \le C_{q_0} \varepsilon_0^{1/4} \;\text{for any} \;  a \ge -{\varepsilon_0^{-1/2}}.
\end{equation}

Recall that, if it occurs, the Main Act begins at time $\tau_1 <T$, in which case we have either $q(\tau_1) = (1+\varepsilon_0)q_0$ or $q(\tau_1) = -q_\rare$.  In the event that the Main Act occurs and $q(\tau_1) = (1+\varepsilon_0)q_0$, we define the random variable
\[
\bar{a} = \frac{\log(1+\varepsilon_0)}{\tau_1};
\]
the value of $\bar{a}$ then determines our strategy during the Main Act. We will consider events of the form ``$\bar{a} \in X$'' for $X \subset \R$. This is shorthand for the event ``$\tau_1 < T, q(\tau_1) = (1+\varepsilon_0)q_0$, and $\bar{a} \in X$'' (recall that $\bar{a}$ is not defined unless $\tau_1 < T$ and $q(\tau_1) = (1+\varepsilon_0)q_0$).

Let $\alpha > 1$. We claim that, provided $\varepsilon_0$ is sufficiently small depending on $T$, we have the following estimates:
\begin{flalign}
&\text{If}\; a > 10\alpha\; \text{then}\; \prob(\bar{a} < \alpha) < C \exp(-c_{\varepsilon_0} q_0^2 a).&\label{eq: p1}\\
&\text{If}\; |a| < \alpha/10, \;\text{then}\; \prob(\bar{a}>\alpha) < C \exp(-c_{\varepsilon_0} q_0^2 \alpha).\label{eq: p2}\\
&\text{If}\; a < 0, \;\text{then}\; \prob(\bar{a} > 0) < C_{T,q_0,\varepsilon_0} \exp(-c_{\varepsilon_0} q_0^2 |a|).\label{eq: p3}
\end{flalign}
These estimates follow, respectively, from Lemmas \ref{lem: abar 1}, \ref{lem: Asharp}, and \ref{lem: decay} in Section \ref{sec: stopping}.

We define a random variable $\cost_M$ to be the cost incurred during the Main Act for a given realization of the noise, i.e.,
\[
\cost_M = \int_{\tau_1}^{T}( (q(t))^2 + (u(t))^2) \ dt.
\]
We use the remainder of this section to prove the estimates
\begin{multline}\label{eq: main main}
\E[\cost_M] <C\varepsilon +  (1+C\varepsilon)\cdot \sup\{\cJ(\sigma,b;T+\varepsilon_0,q_0): |b-a|< \varepsilon |a|\}\\ \text{for any}\; |a| \le A
\end{multline}
and
\begin{equation}\label{eq: 4.27.5}
    \E[\cost_M] < C\varepsilon + (1+C\varepsilon)\cdot \cJ_0(a;T,q_0)\;\text{for any}\; |a|>A.
\end{equation}
Combining \eqref{eq: main main}, \eqref{eq: 4.27.5} with the fact that
\[
\cJ(\sigma_*,a;T,q_0) = \E[\cost_P]+\E[\cost_M]
\]
and the estimate \eqref{eq: npro main} above, and taking $\varepsilon_0$ sufficiently small depending on $\varepsilon,T,q_0$, we deduce \eqref{eq: pmt 2} and \eqref{eq: pmt 2.5}, which in turn imply the conclusion of the Main Theorem.

Note that
\begin{equation}\label{eq: costm decomp}
\E[\cost_M] = \E[\cost_M \cdot \mathbbm{1}_{\bar{a} > 0}] + \E[\cost_M \cdot \mathbbm{1}_{q(\tau_1) = - q_\rare}]
\end{equation}

In the event that we enter the Main Act at some time $\tau_1 \in (0,T)$ for which $q(\tau_1) = - q_\rare$, we execute the strategy $\br$ starting from time $\tau_1$ and position $-q_\rare$. Combining \eqref{eq: br} and \eqref{eq: main 0.4} gives
\begin{equation}\label{eq: main 2.5}
\E[\cost_M \cdot \mathbbm{1}_{q(\tau_1) = - q_\rare}] < C_{T,q_0} \cdot \varepsilon_0^{1/4} \cdot \cJ_0(a;T,q_0)\;\text{for any} \; a \ge - {\varepsilon_0^{-1/2}}.
\end{equation}
Provided $\varepsilon_0$ is sufficiently small (depending on $T$), Lemma \ref{lem: OS asymp} implies that
\[
\cJ_0(a;T,q_0) < C_{T,q_0} \varepsilon_0^{1/2}\;\text{for any}\; a < - {\varepsilon_0^{-1/2}};
\]
combining this with \eqref{eq: br} gives
\begin{equation}\label{eq: main 2.5.5}
\E[\cost_M \cdot \mathbbm{1}_{q(\tau_1) = - q_\rare}]< C_{T,q_0} \varepsilon_0^{1/2}\;\text{for any}\; a < - {\varepsilon_0^{-1/2}}.
\end{equation}
We combine \eqref{eq: main 2.5}, \eqref{eq: main 2.5.5}, and take $\varepsilon_0$ sufficiently small depending on $\varepsilon,T,q_0$ to get
\begin{equation}\label{eq: main 2.5.6}
\begin{split}
\E[\cost_M \cdot \mathbbm{1}_{q(\tau_1) = - q_\rare}]< \varepsilon + \varepsilon \cdot \cJ_0(a;T,q_0) \;\text{for any}\; a \in \R.
\end{split}
\end{equation}

Throughout the remainder of this section we will make use of the fact that
\begin{equation}\label{eq: 4.31.25}
\cJ_0(a;T,q_0) < \cJ(\sigma,a;T+\varepsilon_0,q_0)\;\text{for any}\; a \in \R;
\end{equation}
this is a consequence of Lemma \ref{lem: OS}. We note that \eqref{eq: main 2.5.6} and \eqref{eq: 4.31.25} imply that
\begin{multline}\label{eq: 4.31.5}
\E[\cost_M \cdot \mathbbm{1}_{q(\tau_1) = - q_\rare}]\\ < \varepsilon + \varepsilon \cdot \sup\{ \cJ(\sigma,b;T+\varepsilon_0,q_0): |b-a|<\varepsilon|a|\}\;\text{for any}\; a \in \R.
\end{multline}

It remains to control the expected value of $\cost_M$ when $\bar{a}>0$. We will consider 4 cases.

\noindent \underline{Case I: $a$ is large, positive}

Suppose that $ a > A$. In the event that $\bar{a} \ge A/10$, we execute the strategy $\las$ during the Main Act. Therefore, by \eqref{eq: las 1}, we have
\begin{equation}\label{eq: main 1}
\begin{split}
\E[\cost_M \cdot \mathbbm{1}_{\bar{a}\ge A/10}] &< \cJ(\las,a;T,(1+\varepsilon_0)q_0)\\ &< (1+C\varepsilon) \cdot \cJ_0(a;T,q_0).
\end{split}
\end{equation}

In the event that $0 < \bar{a} < A/10$, we execute strategy $\bar{\sigma}$ during the Main Act. Therefore, by \eqref{eq: bas 2} and \eqref{eq: p1}, we have
\begin{equation}\label{eq: main 2}
\begin{split}
\E[\cost_M \cdot \mathbbm{1}_{0 < \bar{a}<A/10}] &< \cJ(\bar{\sigma},a;T,(1+\varepsilon_0)q_0)\cdot \prob(\bar{a} < A/10) \\
&< C_{T,q_0,\varepsilon, \varepsilon_0} a^{2m_0}\exp(-c_{\varepsilon_0}q_0^2 a).
\end{split}
\end{equation}

Combining \eqref{eq: main 1} and \eqref{eq: main 2} and taking $A$ sufficiently large depending on $T,q_0,\varepsilon, \varepsilon_0$ gives
\begin{equation}\label{eq: main 3}
\E[\cost_M \cdot \mathbbm{1}_{\bar{a}>0}] < (1+C\varepsilon) \cdot \cJ_0(a;T,q_0) + \varepsilon\;\text{for any}\; a > A.
\end{equation}

\noindent\underline{Case II: $a$ is positive, medium-sized}

Suppose that $ \frac{1}{100}A < a < A$.

If the event $\bar{a} \ge  A/10$ occurs, then during the Main Act we execute the strategy $\las$. By \eqref{eq: las 1}, we then have
\begin{equation}\label{eq: main 5}
\begin{split}
\E[\cost_M \cdot \mathbbm{1}_{\bar{a}>A/10}] &< \cJ(\las,a;T,(1+\varepsilon_0)q_0) \cdot \prob(\bar{a}\ge A/10) \\
&< (1+C\varepsilon) \cdot \cJ_0(a;T,q_0) \cdot \prob(\bar{a}\ge A/10).
\end{split}
\end{equation}

In the event that $0<\bar{a}<A/10$, we execute the strategy $\bar{\sigma}$ during the Main Act. By \eqref{eq: bas 1}, we then have
\begin{equation}\label{eq: main 6}
\begin{split}
\E[\cost_M \cdot &\mathbbm{1}_{0<\bar{a}<A/10}] \\
&< \cJ(\bar{\sigma},a;T,(1+\varepsilon_0)q_0) \cdot \prob(\bar{a}<A/10)\\
& < (1+C \varepsilon)  \cdot \sup\{\cJ(\sigma, b; T+\varepsilon_0, q_0): |a-b| < \varepsilon |a|\}\\
& \qquad \cdot \prob(\bar{a}<A/10).
\end{split}
\end{equation}
Combining \eqref{eq: main 5}, \eqref{eq: main 6}, and \eqref{eq: 4.31.25} we get
\begin{multline}\label{eq: main 7}
    \E[\cost_M\cdot \mathbbm{1}_{\bar{a}>0}] < (1+C \varepsilon)  \cdot \sup\{\cJ(\sigma, b; T+\varepsilon_0, q_0): |a-b| < \varepsilon |a|\}
    \\ \text{for any}\; \frac{1}{100}A < a <  A.
\end{multline}

\noindent \underline{Case III: $a$ is bounded}

Suppose that $|a| < \frac{1}{100} A$.

In the event that $0 < \bar{a} < A/10$, we execute the strategy $\bar{\sigma}$ during the Main Act. By \eqref{eq: bas 1}, we have
\begin{equation}\label{eq: main 8}
\begin{split}
    \E[\cost_M \cdot &\mathbbm{1}_{0 <\bar{a}<A/10}]\\
    &< \cJ(\bar{\sigma},a;T,(1+\varepsilon_0)q_0) \\
    &< (1+C \varepsilon)  \cdot \sup\{\cJ(\sigma, b; T+\varepsilon_0, q_0): |a-b| < \varepsilon |a|\}.
\end{split}
\end{equation}

In the event that $\bar{a}\ge A/10$, we execute the strategy $\las$ during the Main Act. By \eqref{eq: las 2} and \eqref{eq: p2}, we therefore have
\begin{equation}\label{eq: main 9}
    \E[\cost_M \cdot \mathbbm{1}_{\bar{a}>A/10}] < C_{T,q_0} A^2 \exp(-c_{\varepsilon_0} q_0^2 A).
\end{equation}

Combining \eqref{eq: main 8} and \eqref{eq: main 9}, and taking $A$ sufficiently large depending on $T,q_0,\varepsilon$, $\varepsilon_0$ gives
\begin{multline}\label{eq: main 10}
\E[\cost_M\cdot \mathbbm{1}_{\bar{a}>0}]<\varepsilon + (1+C \varepsilon)  \cdot \sup\{\cJ(\sigma, b; T+\varepsilon_0, q_0): |a-b| < \varepsilon |a|\}\\ \text{for any}\; |a|< \frac{A}{100}.
\end{multline}

\noindent \underline{Case IV: $a$ is large, negative}

Suppose that $a < - \frac{1}{100} A$.

In the event that $\bar{a}>0$ we execute either the strategy $\las$ or the strategy $\bar{\sigma}$ during the Main Act. Combining \eqref{eq: las 2}, \eqref{eq: bas 2}, and \eqref{eq: p3} gives
\[
\E[\cost_M\cdot \mathbbm{1}_{\bar{a}>0}] < C_{T,q_0,\varepsilon, \varepsilon_0} |a|^{2m_0} \exp(-c_{\varepsilon_0} q_0^2 |a|).
\]
Taking $A$ sufficiently large depending on $T,q_0,\varepsilon, \varepsilon_0$ then gives
\begin{equation}\label{eq: main 11}
\E[\cost_M\cdot \mathbbm{1}_{\bar{a}>0}] < \varepsilon\;\text{for any}\; a < - \frac{A}{100}.
\end{equation}

Combining \eqref{eq: main 3}, \eqref{eq: main 7}, \eqref{eq: main 10}, and \eqref{eq: main 11} with \eqref{eq: costm decomp}, \eqref{eq: main 2.5.6}, and \eqref{eq: 4.31.5} proves \eqref{eq: main main} and \eqref{eq: 4.27.5}.

\subsection{Proof of Corollary \ref{cor: q0}}\label{sec: q0}
Let $\varepsilon>0$ be given and let $A$ be sufficiently large so that the conclusion of Theorem \ref{thm: intro 2} holds. Let $\sigma$ be an $A$-bounded strategy for time horizon $T+\varepsilon$ and starting position $\varepsilon$.

Then the strategy $\sigma_*$ (as in Theorem \ref{thm: intro 2}) for time horizon $T$ and starting position $\varepsilon$ satisfies
\begin{multline}\label{eq: q0 1}
    \cJ(\sigma_*,a;T,\varepsilon) < \varepsilon + (1+\varepsilon) \cdot \sup\{ \cJ(\sigma, b; T+\varepsilon, \varepsilon): |a-b| < \varepsilon|a|\}\\ \text{for any}\; a \in [-A,A]
\end{multline}
and
\begin{equation}\label{eq: q0 2}
    \cJ(\sigma_*,a;T,\varepsilon) < \varepsilon + (1+\varepsilon) \cdot \cJ_0(a;T,\varepsilon)\;\text{for any}\; |a| > A.
\end{equation}
Combining \eqref{eq: q0 2} with Lemma \ref{lem: OS asymp} (specifically, Parts 1 and 2 of Lemma \ref{lem: OS asymp}), we deduce that, provided $A$ is sufficiently large depending on $\varepsilon$ and $T$, we have
\begin{equation}\label{eq: q0 3}
\cJ(\sigma_*,a;T,\varepsilon) < \varepsilon + (1+C\varepsilon) \cdot \cJ_0(a;T,0)\;\text{for any}\; |a| > A.
\end{equation}
By Remark \ref{rmk: symmetry}, $\sigma_*$ gives rise to a strategy for time horizon $T$ and starting position $-\varepsilon$; we also denote this strategy by $\sigma_*$ and recall that
\[
\cJ(\sigma_*,a;T,\varepsilon) = \cJ(\sigma_*,a;T,-\varepsilon)
\]

We now construct the strategy $\hat{\sigma}_*$ for time horizon $T$ and starting position $0$. We set $u^{\hat{\sigma}_*}(t) = 0$ from time 0 until time $\hat{t}$, where $\hat{t}$ is equal to the first time $t \in (0,T)$ for which $|q^{\hat{\sigma}_*}(t)| = \varepsilon$ if such a time exists and $\hat{t}$ is equal to $T$ if no such time exists.

If $\hat{t} =T$, then we set $u^{\hat{\sigma}_*} \equiv 0$ until the game ends at time $T$. In this case, we incur a cost of at most $T\varepsilon$ with probability 1.

If $\hat{t} < T$, then we execute the strategy $\sigma_*$ starting from time $\hat{t}$ and position $|q(\hat{t})|=\varepsilon$ until the game ends at time $T$. In this case, our expected cost from time 0 until time $T$ is at most $T\varepsilon + \cJ(\sigma_*,a;T,\varepsilon)$.

We have defined the strategy $\hat{\sigma}^*$ and shown that
\[
\cJ(\hat{\sigma}^*,a;T,0) < T \varepsilon + \cJ(\sigma_*,a;T,\varepsilon).
\]
Combining this with \eqref{eq: q0 1} and \eqref{eq: q0 3} implies the conclusion of the corollary.

\section{The Large \texorpdfstring{$q$}{q} Strategy}\label{sec: lqs}

In this section we prove Theorem \ref{thm: lqs}.

Fix a time horizon $T>0$. Let $\varepsilon>0$ be given. Note that it suffices to prove Theorem \ref{thm: lqs} under the assumption that $\varepsilon$ is sufficiently small depending on $T$.

We let $t_{\mx}\in(0,T)$ denote a real number that we will choose in Section \ref{sec: t max} depending on $\varepsilon$.

Let $A_1 \ge 1$ be a sufficiently large real number depending on $\varepsilon,T$.

We let $q_\bg \ge 1$ be a sufficiently large real number depending on $\varepsilon,A_1,T$. Note that since $A_1$ is determined by $\varepsilon$ and $T$, $q_\bg$ depends only on $\varepsilon$ and $T$. We fix a starting position $q_0 \ge q_\bg$.

For $\alpha \in \R$, recall that $\sigma_\opt(\alpha)$ denotes the known-$a$ strategy for time horizon $T$ (recall that this strategy is independent of the starting position; see Section \ref{sec: setup}). The corresponding control variable is
\[
u^{\sigma_\opt(\alpha)}(t,a) = - \kappa(T-t,\alpha) q^{\sigma_\opt(\alpha)}(t,a)
\]
for any $a \in \R$, $t\in [0,T]$, where
\[
\kappa(T-t,\alpha) = \frac{\tanh((T-t)\sqrt{\alpha^2+1})}{\sqrt{\alpha^2+1} - \alpha \tanh((T-t)\sqrt{\alpha^2+1})}.
\]
We will occasionally refer to the known-$a$ strategy for an arbitrary time horizon $T' \in (0,T]$; we denote this strategy by $\sigma_\opt(\alpha;T')$ for $\alpha \in \R$. We note that $\kappa(T'-t,\alpha)$ is the gain function corresponding to $\sigma_\opt(\alpha;T')$ and we remark that
\begin{equation}\label{eq: kappa 1}
    0 \le \kappa(T'-t,\alpha) \le C \cdot \max\{\alpha,1\}\; \text{for all} \; t \in [0,T'], \alpha \in \R
\end{equation}
with the constant $C$ independent of $T'$.

We are now ready to define the strategy $\lqs$. For the remainder of Section \ref{sec: lqs} we write $q,u$ to denote, respectively, the particle trajectories $q^{\lqs}$ and the control variables $u^{\lqs}$.

We note that the strategy $\lqs$ consists of two epochs: the Prologue and the Main Act. Both epochs are guaranteed to occur.

\underline{\Prologue}: We define a stopping time $\tau_M$ by setting $\tau_M$ equal to the first time $t \in (0, t_{\mx})$ for which $q(t) \notin ((1-\varepsilon)q_0, (1+\varepsilon)q_0)$ if such a time exists, and $\tau_M = t_{\mx}$ if no such time exists.

The Prologue lasts from time $0$ to time $\tau_M$. During the Prologue we exercise no control, i.e. we set $u \equiv 0$. At time $\tau_M$, we enter the Main Act.

\underline{\Main}: The Main Act lasts from time $\tau_M$ until the end of the game at time $T$. Our strategy during the Main Act depends on what happens during the Prologue. We define some events that capture the possible outcomes of the Prologue:
\begin{align*}
&\pro_+ = \{ q(\tau_M) = (1+\varepsilon)q_0\},\\
&\pro_- = \{ q(\tau_M) = (1-\varepsilon)q_0\},\\
& \pro_{\mx} = \{ \tau_M = t_\mx\}.
\end{align*}
The events $\pro_+$, $\pro_-$, $\pro_\mx$ partition our probability space.

If the event $\pro_+$ occurs with $\tau_M = s$ for some $s \in (0,t_\mx)$, then we define
\[
\bar{a} = \frac{\log(1+\varepsilon)}{s}.
\]
If the event $\pro_-$ occurs with $\tau_M = s$ for some $ s \in (0,t_\mx)$, then we define
\[
\bar{a} = \frac{\log(1-\varepsilon)}{s}.
\]
If $\pro_+$ (respectively $\pro_-$) occurs, then we have $\bar{a}>0$ (resp. $\bar{a} < 0$) with probability 1.

If the event $\pro_\mx$ occurs, then we leave $\bar{a}$ undefined.

For $X \subset (0,\infty)$ we will sometimes write ``$\bar{a} \in X$'' as shorthand for the event ``$\bar{a} \in X$ and $\pro_+$''. Similarly, for $Y \subset (-\infty, 0)$ we write ``$\bar{a}\in Y$'' for the event ``$\bar{a} \in Y$ and $\pro_-$''.

We now specify our strategy during the Main Act.

\noindent\underline{Case I}: (We believe $a$ is small)

If the event $\pro_\mx$ occurs, then we set
\[
u(t) = - \kappa(T - t,0)\cdot q(t)
\]
during the Main Act. In this case, the expected cost incurred during the Main Act is equal to the expected cost of the known-$a$ strategy $\sigma_\opt(0;T-t_\mx)$ for time horizon $(T-t_\mx)$ and starting position $q(t_\mx)$.

\noindent \underline{Case II}: (We believe $a$ is large, positive)

If the event $\pro_+$ occurs and $\bar{a} \ge A_1$, then we set
\[
u(t) = - 2 \bar{a} q(t)
\]
during the Main Act. Note that this is the control variable corresponding to the constant gain strategy $\cg(2\bar{a})$ (see Section \ref{sec: known a}) with starting position $(1+\varepsilon)q_0$.

\noindent \underline{Case III}: (We believe that $a$ is bounded, positive)

If the event $\pro_+$ occurs and $0 < \bar{a} < A_1$ and $\tau_M = s$ for some $s \in (0,t_\mx)$, then we set
\[
u(t) = - \kappa(T - t, \bar{a})\cdot  q(t)
\]
during the Main Act. In this case, the expected cost incurred during the Main Act is equal to the expected cost of the optimal known-$a$ strategy $\sigma_\opt(\bar{a};T-s)$ for time horizon $T - s$ and starting position $(1+\varepsilon)q_0$.

\noindent \underline{Case IV}: (We believe that $a$ is large, negative)

If the event $\pro_-$ occurs and $\bar{a} \le -A_1$, then we set 
\[
u(t) = 0
\]
during the Main Act. Note that this is the control variable corresponding to the constant gain strategy $\cg(0)$ with starting position $(1-\varepsilon)q_0$.

\noindent \underline{Case V}: (We believe that $a$ is bounded, negative)

If the event $\pro_-$ occurs and $-A_1 < \bar{a} < 0$ and $\tau_M = s$ for some $s \in (0,t_\mx)$, then we set
\[
u(t) = - \kappa(T - s,\bar{a})\cdot q(t)
\]
during the Main Act. In this case, the expected cost incurred during the Main Act is equal to the expected cost of the optimal known-$a$ strategy $\sigma_\opt(\bar{a};T-s)$ for time horizon $T-s$ and starting position $(1-\varepsilon)q_0$.

This concludes the definition of the strategy $\lqs$.

\subsection{The parameter \texorpdfstring{$t_\mx$}{t max}}\label{sec: t max}

We will now choose the parameter $t_\mx$. We set
\[
t_\mx = c_0 \varepsilon^{1/2},
\]
where $c_0$ is a sufficiently small absolute constant. We define
\[
a_\tny = \varepsilon^{1/2}, \qquad a_\sml = \varepsilon^{1/4}.
\]
Provided $\varepsilon$ is sufficiently small depending on $T$, we apply Lemma \ref{lem: 2 side pro} to deduce the following:
\begin{align}
&\prob(\pro_+) \le  C \exp(-c_\varepsilon q_0^2 (|a|+1))\;\text{for any}\; a \le a_\tny,\label{eq: tmax 2}\\
&\prob(\pro_-) \le C \exp(-c_\varepsilon q_0^2 (|a|+1))\;\text{for any}\; a \ge - a_\tny,\label{eq: tmax 3}\\
&\prob(\pro_\mx) \le  C \exp(-c_\varepsilon q_0^2|a|)\;\text{for any} \; |a| \ge a_\sml.\label{eq: tmax 1}
\end{align}

\subsection{Bounding the expected cost}\label{sec: main}

Define random variables
\begin{align*}
&\cost_P = \int_0^{\tau_M}q^2(t) \ dt,\\
& \cost_M = \int_{\tau_M}^{T} \left( (q(t))^2 + (u(t))^2)\right) dt.
\end{align*}
$\cost_P$ and $\cost_M$ are, respectively, the costs incurred during the Prologue and Main Act for a given realization of the noise. Note that both random variables depend on $a$.

Provided $q_\bg$ is sufficiently large depending on $\varepsilon$ and $T$, Lemma \ref{lem: 2 side pro} implies that
\begin{equation}\label{eq: tmax 4}
\E[\cost_P] < C \varepsilon^{1/2} \cJ_0(a;T,q_0)\;\text{for any}\; a \in \R.
\end{equation}

Our goal in the remainder of this section is to control the expected value of $\cost_M$. Recall that we play a different strategy during the Main Act depending on which one of 5 events occur during the Prologue. We analyze each of these cases separately in Sections \ref{sec: main 1}--\ref{sec: main 5}. In Section \ref{sec: main thm} we prove Theorem \ref{thm: lqs}.

\subsubsection{\texorpdfstring{$\cost_M$}{Cost M} when we believe that \texorpdfstring{$a$}{a} is small}\label{sec: main 1}
Recall that if the event $\pro_\mx$ occurs, then the Main Act begins at time $t_\mx$ and we set
\[
u(t) = - \kappa(T-t, 0)\cdot q(t)
\]
during the Main Act. In this case our expected cost during the Main Act is equal to the expected cost of the optimal known-$a$ strategy $\sigma_\opt(0;T-t_\mx)$ for time horizon $T-t_\mx$ starting from position $q(t_\mx) = \tilde{q}_0$ for some $\tilde{q}_0 \in ((1-\varepsilon)q_0,(1+\varepsilon)q_0)$. By Corollary \ref{cor: opt alph score}, we thus have
\begin{equation}\label{eq: main 1.2}
\E[\cost_M | \pro_\mx] \le C_T q_0^2 e^{2|a|T} \;\text{for any}\;a\in\R.
\end{equation}
Combining \eqref{eq: tmax 1} and \eqref{eq: main 1.2} gives
\[
\E[\cost_M \cdot \mathbbm{1}_{\pro_\mx}] < C_T q_0^2 \cdot \exp(2|a|T - c_\varepsilon q_0^2 |a|)\; \text{for any}\; |a| \ge a_\sml.
\]
Taking $q_\bg$ sufficiently large depending on $\varepsilon,T$, we get
\begin{equation}\label{eq: 1.5.4}
\E[\cost_M \cdot \mathbbm{1}_{\pro_\mx}] < \varepsilon\cdot  \min\{1,1/|a|\} \;\text{for any}\; |a| \ge a_\sml.
\end{equation}
By Lemma \ref{lem: OS asymp}, there exists a constant $\tilde{A}>1$ depending only on $T$ such that
\begin{align}
    &\cJ_0(a;T,q_0) > \frac{c_T}{|a|}\;\text{for any}\; a < - \tilde{A},\label{eq: 1.5.1}\\
    & \cJ_0(a;T,q_0) > c_{T} \;\text{for any} \; a > \tilde{A}.\label{eq: 1.5.2}
\end{align}
Again by Lemma \ref{lem: OS asymp},
\begin{equation}\label{eq: 1.5.3}
    \cJ_0(a;T,q_0) \ge \int_0^T \kappa(t,a) \ dt > c_{T}\;\text{for any}\; |a| \le \tilde{A}.
\end{equation}
Combining \eqref{eq: 1.5.4}--\eqref{eq: 1.5.3} gives
\begin{equation}\label{eq: main 1.11}
    \E[\cost_M\cdot \mathbbm{1}_{\pro_\mx}] \le \varepsilon C_{T} \cJ_0(a;T,q_0) \;\text{for any}\; |a| \ge a_\sml.
\end{equation}

By Lemma \ref{lem: lin ctrl formula}, we have
\[
\cJ(\sigma_\opt(\alpha),a;T,q_0) = h(\alpha,a)q_0^2 + j(\alpha,a)\;\text{for any}\; \alpha \in \R, a\in \R
\]
where $h,j: \R^2 \rightarrow (0,\infty)$ are smooth functions depending on $T$ but not on $q_0$. This implies that
\[
\E[\cost_M | \pro_\mx] \le h(0,a) (1+\varepsilon)^2 q_0^2 + j(0,a)
\]
and
\begin{equation}\label{eq: main 1.8}
\cJ_0(a;T,q_0) = h(a,a)q_0^2 + j(a,a).
\end{equation}
Therefore
\begin{equation*}
\begin{split}
    \E[\cost_M | \pro_\mx]  \le& \cJ_0(a;T,q_0) + |j(a,a) - j(0,a)| + h(a,a)C\varepsilon q_0^2\\
    & + C q_0^2 |h(a,a) - h(0,a)|.
\end{split}
\end{equation*}
Since $h, j$ are smooth functions (depending on $T$), we have
\begin{multline}\label{eq: main 1.9}
    \E[\cost_M | \pro_\mx]\\ \le \cJ_0(a;T,q_0) + C_T q_0^2 |a| + h(a,a) C \varepsilon q_0^2\;\text{for any}\; |a| \le a_\sml.
\end{multline}
We combine this with \eqref{eq: main 1.8} to get
\[
\E[\cost_M | \pro_\mx] < (1+C \varepsilon) \cdot \cJ_0(a;T,q_0) + C_{T} q_0^2 \varepsilon^{1/4}\;\text{for any}\; |a| \le a_\sml.
\]
Equation \eqref{eq: main 1.8} also implies that
\[
\cJ_0(a;T,q_0) \ge c_{T} q_0^2 \;\text{for any}\; |a| \le a_\sml.
\]
We have therefore shown that
\begin{multline}\label{eq: main 1.10}
    \E[\cost_M \cdot \mathbbm{1}_{\pro_\mx}] \\
    \le \cJ_0(a;T,q_0)\cdot (1+C_T \varepsilon^{1/4})\cdot \prob(\pro_\mx)\;\text{for any}\; |a| \le a_\sml.
\end{multline}

\subsubsection{\texorpdfstring{$\cost_M$}{Cost M} when we believe that \texorpdfstring{$a$}{a} is large, positive}\label{sec: main 2}
Let $\alpha \ge 0$ be a positive real number. Recall that $\cg(\alpha)$ denotes the simple feedback strategy with constant gain function $\alpha$ (see Section \ref{sec: known a}).

Suppose that the event $\pro_+$ occurs and that we have $\bar{a} = \tilde{a}$ for some $\tilde{a}\ge A_1$. In this case we set $u = - 2\tilde{a}q$ during the Main Act. We therefore have
\begin{equation}\label{eq: lar pos 1}
\E[\cost_M | \bar{a} = \tilde{a}] \le \cJ(\cg(2\tilde{a}),a;T,(1+\varepsilon)q_0)\;\text{for any}\; \tilde{a}\ge A_1, a \in \R.
\end{equation}
Define
\[
A^* = \max\{|a|, A_1\}.
\]
Combining \eqref{eq: lar pos 1} with Corollary \ref{cor: CG score} implies the following estimates:
\begin{multline}\label{eq: big a n4}
    \E[\cost_M | 10^n A^* < \bar{a} < 10^{n+1} A^*] \\ < C_T 10^{2n} (A^*)^2 q_0^2\;\text{for any}\; a \in \R, n\ge 0,
\end{multline}
and
\begin{equation}\label{eq: big a n3}
    \E[ \cost_M | A_1 \le \bar{a} < 10 |a| ] < C_T |a|^2 q_0^2\;\text{for any}\;a < - A_1/10.
\end{equation}
Now let $0 < \delta \ll 1$ be a sufficiently small parameter depending on $\varepsilon$. Using Corollary \ref{cor: CG score} again, we get
\begin{multline}\label{eq: big a n2}
\E[\cost_M | (A_1 \le \bar{a} < 10 a) \;\text{AND}\; (|a - \bar{a}|>\delta a)] \\ < C_T a^2 e^{2aT}q_0^2\; \text{for any}\; a \ge A_1/10,
\end{multline}
and
\begin{multline}\label{eq: big a n1}
    \E[\cost_M | (\bar{a} \ge A_1) \;\text{AND}\; (|\bar{a} - a | < \delta a)] \\ < (1+4a^2(1+\delta)^2)\frac{q_0^2+T}{2a(1-C\delta)}\;\text{for any}\; a \ge A_1/10.
\end{multline}

We assume for now that $a > A_1/10$. Taking $A_1$ sufficiently large and $\delta$ sufficiently small (both depending on $\varepsilon$) in \eqref{eq: big a n1} gives
\begin{equation*}
\E[\cost_M | (\bar{a} \ge A_1) \;\text{AND}\; (|\bar{a} - a | < \delta a)] < 2a (q_0^2+T)(1+\varepsilon).
\end{equation*}
Combining this with Lemma \ref{lem: OS asymp} gives
\[
\E[\cost_M | (\bar{a} \ge A_1 )\;\text{AND}\;(|\bar{a} - a| < \delta a)] < (1+C\varepsilon) \cJ_0(a,T,q_0)
\]
provided $A_1$ is sufficiently large depending on $\varepsilon$ and $T$. This implies
\begin{multline}\label{eq: main 2.3}
    \E[\cost_M \cdot \mathbbm{1}_{\bar{a} \ge A_1} \cdot \mathbbm{1}_{|a - \bar{a}|<\delta a} ] \\
    < (1+C\varepsilon)\cdot \cJ_0(a;T,q_0)\cdot \prob(\bar{a} \ge A_1)\;\text{for any}\; a > A_1/10.
\end{multline}

We continue to assume that $a > A_1/10$. Taking $\varepsilon$ sufficiently small depending on $T$, we apply Lemma \ref{lem: abar 1} to get
\begin{equation}\label{eq: big a n5}
    \prob(|a - \bar{a} | > \delta a) \le C \exp(-c_{\varepsilon, \delta} q_0^2 a).
\end{equation}
Combining \eqref{eq: big a n5} and \eqref{eq: big a n2} gives
\[
\E[\cost_M \cdot \mathbbm{1}_{ A_1 \le \bar{a} < 10 a} \cdot \mathbbm{1}_{|\bar{a} - a|> \delta a}] \le C_T a^2 q_0^2 \exp(2a(T - c_{\varepsilon, \delta} q_0^2)).
\]
Taking $q_\bg$ sufficiently large depending on $\varepsilon,T$ (recall that $\delta$ is determined by $\varepsilon$) we get
\begin{equation}\label{eq: main 2.6}
    \E[\cost_M \cdot \mathbbm{1}_{ A_1 \le \bar{a} < 10 a} \cdot \mathbbm{1}_{|\bar{a} - a|> \delta a}] < \varepsilon\;\text{for any} \; a \ge A_1/10.
\end{equation}

We continue to assume that $a > A_1/10$. Provided $\varepsilon$ is sufficiently small depending on $T$, Lemma \ref{lem: Asharp} gives
\[
\prob( \bar{a} > 10^n a) \le C \exp(- c_\varepsilon q_0^2 10^n a)\;\text{for any} \; n \ge 1.
\]
Combining this with \eqref{eq: big a n4} gives
\begin{multline*}
\E[\cost_M \cdot\mathbbm{1}_{10^{n+1}a > \bar{a} > 10^n a} \cdot \mathbbm{1}_{\bar{a} \ge A_1}] \\ \le C_T 10^{2n} a^2 q_0^2 \exp(-c_\varepsilon q_0^2 10^n a)\;\text{for any}\; n \ge 1.
\end{multline*}
Taking $q_\bg$ to be sufficiently large depending on $\varepsilon$ and $T$ and summing over $n$ gives
\begin{equation}\label{eq: main 2.9}
    \E[\cost_M \cdot\mathbbm{1}_{\bar{a} > 10 a  } \cdot \mathbbm{1}_{\bar{a} \ge A_1}] <  \varepsilon\;\text{for any}\; a > A_1/10.
\end{equation}
Combining \eqref{eq: main 2.3}, \eqref{eq: main 2.6}, \eqref{eq: main 2.9}, and \eqref{eq: 1.5.2}, we get
\begin{multline}\label{eq: main 2.10}
\E[\cost_M \cdot  \mathbbm{1}_{\bar{a}\ge A_1}] <  \cJ_0(a;T,q_0)\cdot (\prob(\bar{a}>A_1) + C_T \varepsilon) \\ \text{for any}\; a > A_1/10.
\end{multline}

We now assume that $|a| \le A_1 /10$. Lemma \ref{lem: Asharp} implies that
\begin{equation}\label{eq: main 2 +6}
\prob(\bar{a} \ge 10^n A_1) \le C \exp(- c_\varepsilon q_0^2 10^n A_1)\;\text{for any}\; n \ge 0
\end{equation}
provided $\varepsilon$ is sufficiently small depending on $T$. We combine \eqref{eq: main 2 +6} and \eqref{eq: big a n4} to get
\begin{equation}
    \begin{split}
        \E[\cost_M \cdot \mathbbm{1}_{\bar{a}\ge A_1}] = \sum_{n\ge 0} \E[\cost_M \cdot \mathbbm{1}_{10^{n+1}A_1 > \bar{a} \ge 10^n A_1 }]\\
        \le C_T \sum_{n \ge 0} 10^{2n} A_1^2 q_0^2 \exp(-c_\varepsilon q_0^2 10^n A_1).
    \end{split}
\end{equation}
Taking $q_\bg$ sufficiently large depending on $\varepsilon$ and $T$ gives
\begin{equation}\label{eq: big a n6}
\E[\cost_M \cdot \mathbbm{1}_{\bar{a} \ge A_1}] \le \frac{\varepsilon}{A_1}\; \text{for any}\; |a| \le A_1/10.
\end{equation}
Lemma \ref{lem: OS asymp} implies that there exists $\tilde{A}$ depending only on $T$ such that
\begin{align}
    &\cJ_0(a;T,q_0) > c_T |a|^{-1}\;\text{for any}\; a < - \tilde{A},\label{eq: big a n7}\\
    & \cJ_0(a;T,q_0) > c_T \;\text{for any}\; a >- \tilde{A}.\label{eq: big a n8}
\end{align}
Combining \eqref{eq: big a n6}--\eqref{eq: big a n8} gives
\begin{equation}\label{eq: main 2.11}
\E[\cost_M \cdot \mathbbm{1}_{\bar{a} \ge A_1}] \le C_T \varepsilon \cJ_0(a;T,q_0)\;\text{for any}\;  |a| \le A_1/10.
\end{equation}

We now assume that $a < - A_1/10$. Combining \eqref{eq: big a n4} with Lemma \ref{lem: Asharp}, we have
\[
\E[\cost_M \cdot \mathbbm{1}_{10^{n+1}|a| \ge \bar{a} > 10^n |a|}] \le C_T 10^{2n} |a|^2 q_0^2 \exp(- c_\varepsilon q_0^2 10^n |a|) \; \text{for any}\; n \ge 1.
\]
Taking $q_\bg$ sufficiently large depending on $\varepsilon$ and $T$ and summing over $n\ge 1$ gives
\begin{equation}\label{eq: big a n9}
\E[\cost_M \cdot \mathbbm{1}_{\bar{a}>10 |a|}] \le  \frac{\varepsilon}{|a|}.
\end{equation}

Combining \eqref{eq: big a n3} with Lemma \ref{lem: 2 side pro} gives
\[
\E[\cost_M \cdot \mathbbm{1}_{A_1 \le \bar{a} < 10 |a|}] < C_{T,\varepsilon}|a|^2 q_0^2 \exp(-c_\varepsilon q_0^2 |a|).
\]
Taking $q_\bg$ sufficiently large depending on $\varepsilon$, $T$ gives
\begin{equation}\label{eq: big a n10}
\E[\cost_M \cdot \mathbbm{1}_{A_1 \le \bar{a} < 10 |a|}] < \frac{\varepsilon}{|a|}.
\end{equation}
Combining \eqref{eq: big a n7}, \eqref{eq: big a n9}, \eqref{eq: big a n10} gives
\begin{equation}\label{eq: main 2.12}
\E[\cost_M \cdot \mathbbm{1}_{\bar{a}\ge A_1}] < C_T \varepsilon \cJ_0(a; T,q_0)\;\text{for any}\; a < - A_1/10
\end{equation}
provided $A_1$ is sufficiently large depending on $T$.
Combining \eqref{eq: main 2.11} and \eqref{eq: main 2.12} gives
\begin{equation}\label{eq: main add 1}
\E[\cost_M \cdot \mathbbm{1}_{\bar{a}\ge A_1}] < C_T \varepsilon \cJ_0(a;T,q_0)\;\text{for any}\; a \le A_1 /10.
\end{equation}

\subsubsection{\texorpdfstring{$\cost_M$}{Cost M} when we believe that \texorpdfstring{$a$}{a} is bounded, positive}\label{sec: bdd pos}

Suppose that the event $\pro_+$ occurs and that $\bar{a} = \tilde{a}$ for some $0 < \tilde{a} < A_1$. This determines the time at which the Main Act begins; denote this time by $s$ and note that $q(s) = (1+\varepsilon)q_0$. In this case we set as our control variable
\[
u(t) = - \kappa(T-t,\tilde{a})\cdot q(t)
\]
during the Main Act (which lasts from time $s$ until time $T$), and thus
\begin{multline}\label{eq: mid a 2}
\E[\cost_M | \bar{a} = \tilde{a} ]\\ \le \cJ(\sigma_\opt(\tilde{a}),a;T,(1+\varepsilon)q_0)\;\text{for any}\; a \in \R, 0 < \tilde{a} < A_1.
\end{multline}
By Lemma \ref{lem: lin ctrl formula}, we have
\[
\cJ(\sigma_\opt(\tilde{a}),a;T,(1+\varepsilon)q_0) = h(\tilde{a},a)(1+\varepsilon)^2q_0^2 + j(\tilde{a},a)\;\text{for any}\; a, \tilde{a} \in \R
\]
and
\begin{equation}\label{eq: mid a 1}
\cJ_0(a;T,q_0) = h(a,a)q_0^2 + j(a,a)\;\text{for any}\; a \in \R,
\end{equation}
where $h,j$ are smooth, positive functions (depending on $T$). We therefore have
\begin{equation*}
    \begin{split}
        \cJ(\sigma_\opt(\tilde{a}),a;T,(1+\varepsilon)q_0) \le &\cJ_0(a;T,q_0) + |j(\tilde{a},a) - j(a,a)| + C h(a,a) \varepsilon q_0^2 \\
        &+ Cq_0^2|h(\tilde{a},a) - h(a,a)|.
    \end{split}
\end{equation*}
Since $h,j$ are smooth functions we have
\begin{multline*}
\cJ(\sigma_\opt(\tilde{a}),a;T,(1+\varepsilon)q_0) \le \cJ_0(a;T,q_0) + C_{A_1,T} q_0^2 |a-\tilde{a}| + C \varepsilon q_0^2 h(a,a) \\ \text{for any}\; a, \tilde{a} \in [0,10A_1]
\end{multline*}
and
\[
h(a,a) > c_{A_1,T}\;\text{for any} \; a \in [0,10A_1].
\]
Taking $0<\delta\ll 1$ to be sufficiently small depending on $\varepsilon, A_1, T$ gives
\begin{multline*}
\cJ(\sigma_\opt(\tilde{a}),a;T,(1+\varepsilon)q_0) < \cJ_0(a;T,q_0) + C \varepsilon q_0^2h(a,a)\\ \text{for any}\; |a - \tilde{a}| < \delta a, \; a \in [a_\tny,10A_1].
\end{multline*}
Combining this with \eqref{eq: mid a 2}, \eqref{eq: mid a 1} gives
\begin{equation}
\begin{split}
    \E[\cost_M | 0 < \bar{a} < A_1 \;\text{AND}\; |a-\bar{a}| < \delta a] \le (1+C \varepsilon) \cdot \cJ_0(a;T,q_0)\\
    \text{for any}\; a \in [a_\tny,10A_1].
\end{split}
\end{equation}
We have therefore shown
\begin{multline}\label{eq: main 3.4.5}
\E[ \cost_M \cdot \mathbbm{1}_{0 < \bar{a} < A_1} \cdot \mathbbm{1}_{|a - \bar{a}| < \delta a} ] \\
< (1+C \varepsilon) \cJ_0(a;T,q_0) \prob(0 < \bar{a} \le A_1) \;\text{for any}\; a \in[a_\tny, 10A_1].
\end{multline}

We combine \eqref{eq: mid a 2} and Corollary \ref{cor: opt alph score} to get
\begin{equation}\label{eq: mid a 3}
\E[\cost_M | \bar{a} = \tilde{a}] \le  C_T A_1^2 e^{2|a| T} q_0^2\;\text{for any}\; a \in \R, 0 < \tilde{a} < A_1.
\end{equation}
By Lemma \ref{lem: abar 1}, we have
\begin{align}
    &\prob(|a - \bar{a}| > \delta a) \le C \exp(-c_{\varepsilon,\delta} q_0^2 a)\;\text{for any}\; a > a_\tny,\label{eq: mid a 4}\\
    &\prob(0 < \bar{a} < A_1) \le C \exp(-c_\varepsilon q_0^2 a)\;\text{for any}\; a \ge 10A_1.\label{eq: mid a 5}
\end{align}
By \eqref{eq: tmax 2}, we have
\begin{equation}\label{eq: main 3.6}
\begin{split}
\prob(0 < \bar{a} \le A_1) &\le \prob(\pro_+) \\
&\le C \exp(- c_\varepsilon q_0^2(|a|+1))\;\text{for any}\; a \le a_\tny.
\end{split}
\end{equation}
Combining \eqref{eq: mid a 3}, \eqref{eq: mid a 5}, and \eqref{eq: main 3.6} gives
\begin{multline*}
\E[\cost_M \cdot \mathbbm{1}_{0 < \bar{a}< A_1}]\\  < C A_1^2 q_0^2 \exp(2|a|T-c_\varepsilon q_0^2(|a|+1)) \; \text{for any} \; a \notin [a_\tny,10A_1].
\end{multline*}
Combining \eqref{eq: mid a 3} and \eqref{eq: mid a 4} gives
\begin{multline*}
\E[\cost_M \cdot \mathbbm{1}_{0 < \bar{a} < A_1} \cdot \mathbbm{1}_{|a - \bar{a}|> \delta a }] \\ < C A_1^2 q_0^2 \exp(2aT-c_{\varepsilon, \delta} q_0^2 a)\;\text{for any}  \; a \in [a_\tny, 10A_1].
\end{multline*}
Taking $q_\bg$ to be sufficiently large depending on $\varepsilon, T$ (recall that $A_1$ is determined by $\varepsilon, T$ and $\delta$ is determined by $\varepsilon, A_1, T$) we get
\begin{align}
    &\E[\cost_M \cdot \mathbbm{1}_{0 < \bar{a} < A_1}] < \varepsilon \cdot \min\{1, |a|^{-1}\}\; \text{for any}\; a \notin [a_\tny, 10A_1],\label{eq: mid a 6} \\
    & \E[\cost_M \cdot \mathbbm{1}_{0 < \bar{a} < A_1} \cdot \mathbbm{1}_{|a - \bar{a}| > \delta a}] < \varepsilon \;\text{for any}\; a \in [a_\tny, 10A_1].\label{eq: mid a 7}
\end{align}
As a consequence of Lemma \ref{lem: OS asymp} we have
\begin{align}
&\min\{|a|^{-1}, 1\} < C_{T} \cdot \cJ_0(a;T,q_0)\;\text{for any}\; a \in \R,\label{eq: j0 1}\\
&1 < C_T\cdot  \cJ_0(a;T,q_0)\;\text{for any}\; a\ge 0.
\end{align}
Therefore \eqref{eq: mid a 6} implies 
\begin{equation}\label{eq: main 3.8}
\E[\cost_M \cdot \mathbbm{1}_{0 < \bar{a} < A_1}] <  C_T  \varepsilon \cJ_0(a;T,q_0)\;\text{for any}\; a \notin [a_\tny, 10 A_1],
\end{equation}
and \eqref{eq: main 3.4.5}, \eqref{eq: mid a 7} imply
\begin{multline}\label{eq: main 3.5.5}
\E[\cost_M \cdot \mathbbm{1}_{0 < \bar{a} < A_1}] < \cJ_0(a;T,q_0)(\prob(0 < \bar{a} < A_1)+C_T \varepsilon)\\ \text{for any}\; a \in [a_\tny, 10A_1].
\end{multline}

\subsubsection{\texorpdfstring{$\cost_M$}{Cost M} when we believe that \texorpdfstring{$a$}{a} is large, negative}
Recall that $\cg(0)$ denotes the simple feedback strategy with constant gain function $0$. This is simply the strategy in which we set the control variable equal to zero for the entire game.

In the event that $\bar{a} = \tilde{a}$ for some $\tilde{a} \le - A_1$, we play the strategy $\cg(0)$ during the Main Act. We therefore have
\begin{equation}\label{eq: big neg a 1}
\E[\cost_M | \bar{a} \le - A_1] \le \cJ(\cg(0),a;T,q_0) \;\text{for any} \; a \in \R.
\end{equation}
Taking $A_1$ to be sufficiently large depending on $T$, we apply Corollary \ref{cor: CG score} to get
\begin{equation}\label{eq: main 4.3}
\E[\cost_M | \bar{a} \le - A_1] \le \frac{1}{2|a|}(q_0^2+T)\;\text{for any}\; a \le -A_1 / 10.
\end{equation}
By Lemma \ref{lem: OS}, 
\begin{equation}\label{eq: main 4.5}
    \frac{1}{2|a|}(q_0^2 + T)\le (1+\varepsilon)\cJ_0(a;T,q_0)\;\text{for any}\; a \le -A_1 / 10
\end{equation}
provided $A_1$ is sufficiently large depending on $\varepsilon$ and $T$. Combining \eqref{eq: main 4.3} and \eqref{eq: main 4.5} gives
\begin{multline}\label{eq: main 4.6}
\E[\cost_M \cdot \mathbbm{1}_{\bar{a}\le -A_1}] \le (1+\varepsilon)\cdot  \cJ_0(a;T,q_0)\cdot \prob(\bar{a} \le - A_1)\\ \text{for any}\; a \le -A_1 / 10.
\end{multline}

Combining \eqref{eq: big neg a 1} and Corollary \ref{cor: CG score} gives
\begin{equation}\label{eq: main 4.4}
\E[\cost_M | \bar{a} \le - A_1 ] \le C_T e^{2|a|T}q_0^2\;\text{for any}\; a > -A_1/10.
\end{equation}
Combining Lemma \ref{lem: abar 1} and \eqref{eq: tmax 3} gives
\begin{equation}\label{eq: main 4.7}
\prob(\bar{a} \le - A_1) \le C \exp(-c_\varepsilon q_0^2 (|a|+1))\;\text{for any} \;  a > - A_1/10.
\end{equation}
Inequalities \eqref{eq: main 4.4} and \eqref{eq: main 4.7} imply that
\[
\E[\cost_M \cdot \mathbbm{1}_{\bar{a}\le -A_1}] \le C_T q_0^2 \exp(2|a|T - c_\varepsilon q_0^2(|a|+1))\;\text{for any}\; a > - A_1/10 .
\]
Taking $q_\bg$ sufficiently large depending on $\varepsilon,T$, we get
\begin{equation*}
    \E[\cost_M \cdot \mathbbm{1}_{\bar{a} \le - A_1}] \le \varepsilon  \min \left\{1, |a|^{-1}\right\} \; \text{for any}\; a > - A_1/10.
\end{equation*}
Combining this with \eqref{eq: j0 1} gives
\begin{equation}\label{eq: main 4.12}
    \E[\cost_M \cdot \mathbbm{1}_{\bar{a}\le -A_1} ] \le C_T \varepsilon \cJ_0(a;T,q_0)\;\text{for any} \; a > -A_1/10.
\end{equation}

\subsubsection{\texorpdfstring{$\cost_M$}{Cost M} when we believe that \texorpdfstring{$a$}{a} is bounded, negative}\label{sec: main 5}
Proceeding as in the proofs of equations \eqref{eq: main 3.8} and \eqref{eq: main 3.5.5} in Section \ref{sec: bdd pos}, we can show that
\begin{equation}\label{eq: main 5.2}
    \E[\cost_M \cdot \mathbbm{1}_{-A_1 < \bar{a} < 0}] < C_T \varepsilon \cJ_0(a;T,q_0) \;\text{for any}\; a \notin [-10A_1, -a_\tny]
\end{equation}
and
\begin{multline}\label{eq: main 5.1}
    \E[\cost_M \cdot \mathbbm{1}_{- A_1 < \bar{a}<0} ] 
    \le \cJ_0(a;T,q_0)(\prob(-A_1 \le \bar{a} < 0) + C_T\varepsilon)\\ \text{for any} \; a \in [-10A_1, -a_\tny].
\end{multline}

\subsubsection{Proof of Theorem \ref{thm: lqs}}\label{sec: main thm}
We collect the estimates \eqref{eq: main 1.10}, \eqref{eq: main 1.11}, \eqref{eq: main 2.10}, \eqref{eq: main add 1}, \eqref{eq: main 3.5.5}, \eqref{eq: main 3.8}, \eqref{eq: main 4.6}, \eqref{eq: main 4.12}, \eqref{eq: main 5.1}, and \eqref{eq: main 5.2} from Section \ref{sec: main}. We assume that $q_\bg$ and $A_1$ are large enough depending on $\varepsilon$ and $T$ for all of these estimates to hold.

\begin{multline*}
     \E[\cost_M \cdot \mathbbm{1}_{\pro_\mx}]  \\ \le \cJ_0(a;T,q_0) ( \prob(\pro_\mx) + C_T \varepsilon^{1/4})\;\text{for any}\; |a| \le a_\sml,
\end{multline*}
\begin{flalign*}
    & \quad \E[\cost_M \cdot \mathbbm{1}_{\pro_\mx}] \le \varepsilon C_T \cJ_0(a;T,q_0) \;\text{for any}\; |a| \ge a_\sml,&
\end{flalign*}
\begin{multline*}
    \E[\cost_M \cdot \mathbbm{1}_{\bar{a}\ge A_1}]
    \le \cJ_0(a;T,q_0) (\prob(\bar{a}>A_1) + C_T \varepsilon)\;\text{for any}\; a > \frac{1}{10}A_1,
\end{multline*}
\begin{flalign*}
    &\quad \E[\cost_M \cdot \mathbbm{1}_{\bar{a} \ge A_1}] \le C_T \varepsilon \cJ_0(a;T,q_0)\;\text{for any}\; a \le \frac{1}{10}A_1,&
\end{flalign*}
\begin{multline*}
    \E[\cost_M \cdot \mathbbm{1}_{0 < \bar{a} < A_1}] \\ \le \cJ_0(a;T,q_0) (\prob(0 < \bar{a} \le A_1) + C_T \varepsilon)\;\text{for any}\; a \in [a_\tny, 10A_1],
\end{multline*}
\begin{flalign*}
&\quad \E[\cost_M \cdot \mathbbm{1}_{0 < \bar{a} < A_1}] \le C_T \varepsilon \cJ_0(a;T,q_0) \; \text{for any}\; a \notin [a_\tny, 10A_1],&
\end{flalign*}
\begin{multline*}
    \E[\cost_M \cdot \mathbbm{1}_{\bar{a} \le - A_1}] \\\le  \cJ_0(a;T,q_0) (\prob(\bar{a} < - A_1) + \varepsilon)\; \text{for any}\; a \le - \frac{1}{10}A_1,
\end{multline*}
\begin{flalign*}
&\quad\E[\cost_M \cdot \mathbbm{1}_{\bar{a} \le - A_1}] \le C_T \varepsilon \cJ_0(a;T,q_0) \;\text{for any}\; a > - \frac{1}{10}A_1,&
\end{flalign*}
\begin{multline*}
    \E[\cost_M \cdot \mathbbm{1}_{-A_1 < \bar{a}<0}]\\
    \le \cJ_0(a;T,q_0) (\prob(-A_1 \le \bar{a} < 0) + C_T\varepsilon) \; \text{for any} \; a \in [-10A_1, - a_\tny],
\end{multline*}
\begin{flalign*}
&\quad \E[\cost_M \cdot \mathbbm{1}_{-A_1 < \bar{a} < 0}] < C_T \varepsilon \cJ_0(a;T,q_0) \;\text{for any}\; a \notin [-10A_1, - a_\tny].&
\end{flalign*}
Combining these estimates proves
\begin{equation}\label{eq: prf 1}
\E[\cost_M ] \le \cJ_0(a;T,q_0) ( 1 + C_T\varepsilon^{1/4}) \;\text{for any}\; a \in \R.
\end{equation}
Combining \eqref{eq: prf 1} with \eqref{eq: tmax 4} and taking $q_\bg$ to be large enough so that \eqref{eq: tmax 4} holds proves that
\begin{align*}
\cJ(\lqs, a;T,q_0) &= \E[\cost_P] + \E[\cost_M] \\
& \le \cJ_0(a;T,q_0) (1 + C_T \varepsilon^{1/4})\;\text{for any}\; a \in \R.
\end{align*}
This proves Theorem \ref{thm: lqs}.

\section{The Large \texorpdfstring{$a$}{a} Strategy}\label{sec: las}

In this section we prove Theorem \ref{thm: las}.

We fix a time horizon $T>0$ and a starting position $q_0>0$. 

Let $\varepsilon >0$ be given and let $A\ge 1$ be a sufficiently large number depending on $\varepsilon,T,q_0$. Without loss of generality we assume that $\varepsilon$ is sufficiently small depending on $T,q_0$.

By Theorem \ref{thm: lqs}, there exists a number $q_0^* \ge 2 q_0$ depending on $q_0$ and $T$ and a strategy $\br$ (here ``$\br$'' stands for Bounded Regret) such that
\begin{equation*}
\cJ(\br, a;T, q_0^*) \le 2\cdot  \cJ_0(a;T, q_0^*)\;\text{for any}\; a \in \R.
\end{equation*}
By Remark \ref{cor: q_0s}, we have
\begin{equation}\label{eq: br las}
\cJ(\br, a;T, q_0^*) \le C_{T,q_0} \cdot \cJ_0(a;T, q_0)\;\text{for any}\; a \in \R.
\end{equation}
The strategy $\br$ for time horizon $T$ and starting position $q_0^*$ gives rise to a strategy $\br_-$ for time horizon $T$ and starting position $-q_0^*$ satisfying
\[
\cJ(\br_-,a;T,-q_0^*) = \cJ(\br,a;T,q_0^*)
\]
(see Section \ref{sec: setup} for details). For the remainder of Section \ref{sec: las} we will write $\br$ to denote both of these strategies; it will be clear from context which strategy we are referring to.

We now define the strategy $\las$. For the remainder of Section \ref{sec: las} we write $q,u$ to denote $q^{\las}$, $u^{\las}$.

\underline{\textsc{Testing Epoch}}: We let $\tau_C$ denote the first time $t \in (0,T)$ for which $q(t) = (1+\varepsilon)q_0$ or $q(t) = -q_0^*$, if such a time exists. If no such time exists we set $\tau_C = T$. The Testing Epoch ends when we reach time $\tau_C$. During the Testing Epoch we set $u=0$. If $\tau_C = T$, then the game is over when we reach time $\tau_C$. If $\tau_C <T$, then at time $\tau_C$ we enter the Control Epoch.

\underline{\textsc{Control Epoch}}: In a moment, we will define a stopping time $\tau_D$. With probability 1 we have $\tau_D \in (\tau_C, T]$. If the Control Epoch occurs, then it lasts from time $\tau_C$ until time $\tau_D$.

In the event that we enter the Control Epoch at some time $\tau_C \in (0,T)$ for which $q(\tau_C) = -q_0^*$, we set $\tau_D = T$ so that the Control Epoch begins at time $\tau_C$ and position $-q_0^*$ and lasts until the game ends at time $T$. In this case, during the Control Epoch we execute the strategy $\br$.

In the event that we enter the Control Epoch at some time $\tau_C \in (0,T)$ for which $q(\tau_C) = (1+\varepsilon)q_0$, we define a random variable
\[
\bar{a} = \frac{\log(1+\varepsilon)}{\tau_C}.
\]
We then set
\[
u(t) = - 2\bar{a} q(t)
\]
during the Control Epoch. In this case, we define the stopping time $\tau_D$ to be equal to the first time $t \in (\tau_C, T)$ for which $|q(t)| = q_0^*$ if such a time exists and equal to $T$ if no such time exists. 

In either case, if $\tau_D = T$, then the Control Epoch ends along with the game at time $T$. If $\tau_D<T$, then at time $\tau_D$ we enter the Disaster Mitigation Epoch.

\underline{\textsc{Disaster Mitigation Epoch}}: If we enter the Disaster Mitigation Epoch at some time $\tau_D \in (\tau_C, T)$, then we have $|q(\tau_D)|=q_0^*$. We then execute the strategy $\br$ from time $\tau_D$ until the game ends at time $T$.

This concludes the definition of the strategy $\las$.

We define three random variables:
\[
\cost_T = \int_0^{\tau_C} q^2(t)\  dt,
\]
\[
\cost_C = \int_{\tau_C}^{\tau_D} (q^2(t) + u^2(t))\ dt,
\]
and
\[
\cost_D = \int_{\tau_D}^T (q^2(t) + u^2(t))\ dt.
\]
These are, respectively, the costs incurred during the Testing Epoch, the Control Epoch, and the Disaster Mitigation Epoch for a given realization of the noise. Note that they all depend on $a$.

Let $X \subset (0,\infty)$ be an arbitrary subset. We will be interested in events of the form ``$\tau_C < T$ and $q(\tau_C) = (1+\varepsilon)q_0$ and $\bar{a} \in X$''. We will abbreviate this by writing just ``$\bar{a} \in X$''; note that $\bar{a}$ is undefined if either $\tau_C = T$ or if $q(\tau_C) = - q_0^*$.

By Lemma \ref{lem: OS asymp}, provided $A$ is sufficiently large depending on $T$ we have
\begin{equation}\label{eq: las j0}
c_{T,q_0} \le \cJ_0(a; T,q_0)\;\text{for any}\; a \ge A.
\end{equation}
We will make use of this fact throughout this section.

\subsection{The cost during the Testing Epoch}
 By Lemma \ref{lem: 1 side pro}, provided $\varepsilon$ is sufficiently small depending on $T$ we have
\begin{equation}\label{eq: test 1}
\E[\cost_T] <  C_{T,q_0}\cdot \varepsilon^{1/4} \;\text{for any}\; a \in \R
\end{equation}
(recall that $q_0^*$ is determined by $T,q_0$). Combining this with \eqref{eq: las j0} gives
\begin{equation}\label{eq: test 2}
\E[\cost_T] < C_{T,q_0} \cdot \varepsilon^{1/4}\cdot \cJ_0(a;T,q_0)\;\text{for any}\; a \ge A.
\end{equation}

\subsection{The cost during the Control Epoch}\label{sec: las control}
Our goal in this section is to control the expected value of the random variable $\cost_C$, defined above.

Observe that
\begin{equation}\label{eq: ctrl 1}
\begin{split}
\E[\cost_C] = \E[\cost_C \cdot \mathbbm{1}_{\bar{a}>0}] + \E[\cost_C \cdot \mathbbm{1}_{q(\tau_C)=-q_0^*}].
\end{split}
\end{equation}

In the event that we enter the Control Epoch at some time $\tau_C \in (0,T)$ for which $q(\tau_C) = - q_0^*$, we play the strategy $\br$ starting from position $-q_0^*$ and time $\tau_C$ until the game ends at time $T$. By \eqref{eq: br las} we therefore have
\begin{equation}\label{eq: ctrl 1.1}
\E[\cost_C | (q(\tau_C) = - q_0^*)] \le C_{T,q_0} \cdot \cJ_0(a;T,q_0)\;\text{for any}\; a \in \R.
\end{equation}
Provided $A$ is sufficiently large depending on $\varepsilon$, and $\varepsilon$ is sufficiently small depending on $T$, Lemma \ref{lem: 1 side pro} implies
\begin{equation*}
\prob(q(\tau_C) =  - q_0^*) \le C_{q_0}\cdot  \varepsilon^{1/4}\;\text{for any}\; a \ge A.
\end{equation*}
Therefore
\begin{equation}\label{eq: ctrl 2}
\E[\cost_C \cdot \mathbbm{1}_{q(\tau_C) = - q_0^*}] \le C_{T,q_0}\cdot   \varepsilon^{1/4}   \cdot \cJ_0(a;T,q_0)\;\text{for any}\; a \ge A.
\end{equation}
By Corollary \ref{cor: opt alph score},
\[
\cJ_0(a;T,q_0) \le C_{T,q_0}\cdot A\;\text{for any}\; a \le A.
\]
Combining this with \eqref{eq: ctrl 1.1} gives
\begin{equation}\label{eq: ctrl 3}
\E[\cost_C \cdot \mathbbm{1}_{q(\tau_C) = - q_0^*}] \le C_{T,q_0}\cdot A \; \text{for any}\; a \le A.
\end{equation}

We will make use of the following two observations throughout the remainder of this section. First, we note that
\begin{equation}\label{eq: ctrl 4.1}
\E[\cost_C | \bar{a} = \tilde{a}] \le C_{T,q_0}(\tilde{a}^2 + 1)\;\text{for any}\; \tilde{a}>0, a \in \R.
\end{equation}
This follows by observing that in the event that we enter the Control Epoch we have $|q| \le q_0^*$ and $u = - 2 \bar{a} q$ during the Control Epoch with probability 1. Second, we define
\[
A^* = \max\{A, |a|\}.
\]
Taking $\varepsilon$ sufficiently small depending on $T$, Lemma \ref{lem: Asharp} implies that
\begin{equation}\label{eq: Astar}
\prob(\bar{a} > 10^n A^*) \le C \exp(-c_\varepsilon q_0^2 10^n A^*)\;\text{for any}\; n \ge 1, a \in \R.
\end{equation}

We now introduce a parameter $1 \gg \delta>0$ that will be chosen below to be sufficiently small depending on $\varepsilon$. Note that since $\delta$ is determined by $\varepsilon$ we are allowed to take $A$ sufficiently large depending on $\delta$. 

Assume that $ a \ge A$. Observe that
\begin{equation}\label{eq: ctrl 4}
\begin{split}
\E[\cost_C \cdot \mathbbm{1}_{\bar{a}>0}] =& \E[\cost_C \cdot \mathbbm{1}_{|a - \bar{a}|< \delta a}]
+ \E[\cost_C \cdot \mathbbm{1}_{\bar{a} < (1-\delta)a}]\\
&+ \E[\cost_C \cdot \mathbbm{1}_{\bar{a} > (1+\delta)a}].
\end{split}
\end{equation}
In the event that $\bar{a} = \tilde{a}$ for some $\tilde{a}>0$, then during the Control Epoch we play the constant gain strategy $\cg(2\tilde{a})$ (see Section \ref{sec: known a}, specifically the discussion before Corollary \ref{cor: CG score}, for the definition of the strategy $\cg$). Therefore, by Corollary \ref{cor: CG score},
\[
\E[\cost_C | (|a-\bar{a}|<\delta a)] < (1+4a^2(1+\delta)^2) \frac{q_0^2 + T}{2a(1-C\delta)}.
\]
Taking $A$ sufficiently large and $\delta$ sufficiently small, both depending on $\varepsilon$, we have
\[
\E[\cost_C | (|a-\bar{a}|<\delta a)] < 2a(1+\varepsilon)(q_0^2+T).
\]
By Lemma \ref{lem: OS asymp}, provided $A$ is sufficiently large depending on $\varepsilon$ and $T$ we have
\[
\cJ_0(a;T,q_0) \ge 2a (1-\varepsilon)(q_0^2+T).
\]
We deduce that
\begin{equation}\label{eq: ctrl 4.5}
\E[\cost_C \cdot \mathbbm{1}_{|a - \bar{a}|< \delta a}] < (1+ C \varepsilon) \cdot \cJ_0(a;T,q_0)\;\text{for any} \; a \ge A.
\end{equation}

We continue to assume that $a \ge A$. By \eqref{eq: ctrl 4.1},  we have
\[
\E[\cost_C | \bar{a} < (1-\delta)a ] \le C_{T,q_0}\cdot a^2.
\]
Provided $A$ is sufficiently large depending on $\varepsilon$, and $\varepsilon$ is sufficiently small depending on $T$, Lemma \ref{lem: abar 1} implies that
\begin{equation}\label{eq: las add 1}
\prob(|\bar{a}-a| \ge \delta a) \le C \exp(-c_{\varepsilon,\delta} q_0^2 a).
\end{equation}
Therefore
\[
\E[\cost_C \cdot \mathbbm{1}_{\bar{a} < (1-\delta)a}] < C_{T,q_0} \cdot a^2 \cdot \exp(-c_{\varepsilon, \delta} q_0^2 a).
\]
Taking $A$ sufficiently large depending on $\varepsilon, \delta, q_0, T$ gives
\begin{equation}\label{eq: ctrl 5}
\E[\cost_C \cdot \mathbbm{1}_{\bar{a} < (1-\delta)a}] < \varepsilon\;\text{for any}\; a \ge A.
\end{equation}

We continue to assume that $a \ge A$. By \eqref{eq: ctrl 4.1}, we have
\begin{align}
   &\E[\cost_C | (1+\delta)a < \bar{a} < 10 a] < C_{T,q_0} \cdot a^2, \label{eq: ctrl 5.2}\\
   &\E[\cost_C | 10^n a < \bar{a} < 10^{n+1}a] < C_{T,q_0}\cdot  10^{2n} a^2\;\text{for any}\; n \ge 1.\label{eq: ctrl 5.3}
\end{align}
Since
\begin{equation*}
\begin{split}
\E[\cost_C \cdot \mathbbm{1}_{\bar{a} > (1+\delta)a}] =& \E[\cost_C \cdot \mathbbm{1}_{(1+\delta)a<\bar{a}<10 a}]\\
&+\sum_{n=1}^\infty \E[\cost_C \cdot \mathbbm{1}_{10^na < \bar{a} < 10^{n+1}a}],
\end{split}
\end{equation*}
equations \eqref{eq: Astar} (our assumption that $a \ge A$ implies that $A^* = a$), \eqref{eq: las add 1}, \eqref{eq: ctrl 5.2}, and \eqref{eq: ctrl 5.3} imply that
\begin{equation*}
\begin{split}
\E[\cost_C \cdot \mathbbm{1}_{\bar{a} > (1+\delta)a}] <& C_{T,q_0}\cdot a^2 \exp(-c_{\varepsilon, \delta}q_0^2 a) \\
&+ \sum_{n=1}^\infty C_{T,q_0}\cdot 10^{2n}a^2 \exp(-c_\varepsilon q_0^2 10^n a).
\end{split}
\end{equation*}
Taking $A$ sufficiently large depending on $\varepsilon,\delta,T,q_0$ gives
\begin{equation}\label{eq: ctrl 6}
\E[\cost_C \cdot \mathbbm{1}_{\bar{a}>(1+\delta) a}] < \varepsilon\;\text{for any}\; a \ge A.
\end{equation}

Combining \eqref{eq: ctrl 4}, \eqref{eq: ctrl 4.5}, \eqref{eq: ctrl 5}, \eqref{eq: ctrl 6} gives
\begin{equation*}
\E[\cost_C \cdot \mathbbm{1}_{\bar{a}>0}] < (1+C\varepsilon) \cdot \cJ_0(a;T,q_0) + C' \varepsilon\;\text{for any}\; a \ge A;
\end{equation*}
\eqref{eq: las j0} then implies that
\begin{equation}\label{eq: ctrl 7}
\E[\cost_C \cdot \mathbbm{1}_{\bar{a}>0}] < (1+C_{T,q_0}\cdot\varepsilon) \cdot \cJ_0(a;T,q_0) \;\text{for any}\; a \ge A.
\end{equation}

We now suppose that $|a| < A$. Note that
\begin{equation}\label{eq: ctrl 7.2}
\begin{split}
\E[\cost_C \cdot \mathbbm{1}_{\bar{a}>0}]  =& \E[\cost_C \cdot \mathbbm{1}_{\bar{a}< 10 A}]\\
&+ \sum_{n=1}^\infty \E[\cost_C \cdot \mathbbm{1}_{10^n A < \bar{a} < 10^{n+1}A}].
\end{split}
\end{equation}
Equation \eqref{eq: ctrl 4.1} implies
\begin{align}
&\E[\cost_C | \bar{a} < 10 A] < C_{T,q_0} A^2 \label{eq: ctrl 7.3}, \\
&\E[\cost_C | 10^n A< \bar{a} < 10^{n+1}A] \le C_{T,q_0} 10^{2n} A^2\;\text{for any}\; n \ge 1. \label{eq: ctrl 7.4}
\end{align}
We combine \eqref{eq: ctrl 7.2}-- \eqref{eq: ctrl 7.4} with \eqref{eq: Astar} (our assumption $|a| < A$ implies that $A^* = A$) to deduce that
\begin{equation}
\begin{split}
\E[\cost_C \cdot \mathbbm{1}_{\bar{a}>0}] \le C_{T,q_0} A^2 + \sum_{n=1}^\infty C_{T,q_0}  10^{2n} A^2 \exp(-c_\varepsilon q_0^2 10^n A).
\end{split}
\end{equation}
Taking $A$ sufficiently large depending on $\varepsilon, q_0$ gives
\begin{equation}\label{eq: ctrl 8}
\E[\cost_C \cdot \mathbbm{1}_{\bar{a}>0}] < C_{T,q_0}\cdot A^2\;\text{for any} \; |a| < A.
\end{equation}

We now assume that $a < -A$. Note that
\begin{equation*}
\prob(\bar{a}<10 |a|) = \prob(0 < \bar{a} <10 |a|) \le \prob(\bar{a}>0).
\end{equation*}
Applying Lemma \ref{lem: decay}, we get
\begin{equation}\label{eq: ctrl 8.1}
\prob(\bar{a}<10 |a|) \le C_{\varepsilon,T,q_0} \cdot \exp(-c_\varepsilon q_0^2 |a|).
\end{equation}
Note that
\begin{equation*}
\begin{split}
\E[\cost_C \cdot \mathbbm{1}_{\bar{a}>0}] =& \E[\cost_C \cdot \mathbbm{1}_{\bar{a}< 10 |a|}]\\
&+\sum_{n=1}^\infty \E[\cost_C \cdot \mathbbm{1}_{10^n|a| < \bar{a} < 10^{n+1}|a|}].
\end{split}
\end{equation*}
We combine this with \eqref{eq: ctrl 4.1}, \eqref{eq: Astar}, and \eqref{eq: ctrl 8.1} to get
\begin{multline*}
\E[\cost_C \cdot \mathbbm{1}_{\bar{a}>0}]\\ \le C_{\varepsilon,T,q_0} |a|^2 \exp(-c_\varepsilon q_0^2 |a|)+ C_{T,q_0} \sum_{n=1}^\infty 10^{2n}|a|^2 \exp(-c_\varepsilon q_0^2 10^n |a| ).
\end{multline*}
Taking $A$ sufficiently large depending on $\varepsilon, T, q_0$ gives 
\begin{equation}\label{eq: ctrl 9}
\E[\cost_C \cdot \mathbbm{1}_{\bar{a}>0}] < \varepsilon \;\text{for any}\; a < -A.
\end{equation}

We combine \eqref{eq: ctrl 1}, \eqref{eq: ctrl 2}, and \eqref{eq: ctrl 7} to get that
\begin{equation}\label{eq: ctrl 10}
\E[\cost_C] < (1+C_{T,q_0}\cdot \varepsilon^{1/4}) \cJ_0(a;T,q_0) \;\text{for any} \; a \ge A,
\end{equation}
and we combine \eqref{eq: ctrl 1}, \eqref{eq: ctrl 3}, \eqref{eq: ctrl 8}, and \eqref{eq: ctrl 9} to get
\begin{equation}\label{eq: ctrl 11}
\E[\cost_C] < C_{T,q_0} \cdot A^2\;\text{for any}\; a \le A.
\end{equation}

\subsection{The cost during the Disaster Mitigation Epoch}\label{sec: las disaster}
Our goal in this section is to control the expected value of the random variable $\cost_D$.

Let $\cD$ denote the event that we reach the Disaster Mitigation Epoch, i.e., $\cD$ is the event that $\tau_C<T$, $q(\tau_C) = (1+\varepsilon)q_0$, and $\tau_D < T$. For a subset $X \subseteq (0,\infty)$, we let $\cD(X)$ denote the event that $\cD$ occurs and $\bar{a} \in X$. Note that $\cD = \cD((0,\infty))$.

We remark that
\[
\E[\cost_D] = \E[\cost_D \cdot \mathbbm{1}_{\cD}].
\]

Suppose that we enter the Disaster Mitigation Epoch at some time $\tau_D \in (0,T)$ and suppose that $\bar{a}= \tilde{a}$ for some $\tilde{a}>0$. Starting from position $|q| = q_0^*$ and time $\tau_D$ until time $T$ we execute the strategy $\br$. Therefore, by \eqref{eq: br las}, we have
\begin{equation}\label{eq: dme 1}
\E[\cost_D | \cD \;\text{AND}\; \bar{a} = \tilde{a} ] \le C_{T,q_0} \cdot \cJ_0(a;T,q_0)\;\text{for any}\; a \in \R, \tilde{a} > 0.
\end{equation}
We remark that the RHS of \eqref{eq: dme 1} is independent of $\tilde{a}$, and so we have
\[
\E[\cost_D | \cD(X)] \le C_{T,q_0}\cdot \cJ_0(a;T,q_0)\;\text{for any}\; a \in \R, X \subset (0,\infty).
\]
Combining this with Corollary \ref{cor: opt alph score} gives
\begin{equation}\label{eq: dme 1.1}
\E[\cost_D | \cD(X) ] \le C_{T,q_0}\cdot  \max\{a,1\} \; \text{for any}\; a \in \R, X\subset (0,\infty).
\end{equation}
In particular, taking $X = (0,\infty)$ implies that
\begin{equation}\label{eq: dme 4}
\E[\cost_D ] \le C_{T,q_0}\cdot A \;\text{for any}\; a \le A.
\end{equation}

We now show that the probability of reaching the Disaster Mitigation Epoch is small when $a \ge A$.

Assume $a \ge A$. Observe that
\[
\prob(\cD((0,3a/4))) \le \prob( 0 < \bar{a} < 3a/4).
\]
Provided $A$ is sufficiently large depending on $\varepsilon$, and $\varepsilon$ is sufficiently small depending on $T$, Lemma \ref{lem: abar 1} implies that
\begin{equation}\label{eq: dme 2}
\prob(\cD ( (0, 3a/4))) \le  C\exp(-c_\varepsilon q_0^2 a).
\end{equation}

Now suppose that the event $\bar{a} = \tilde{a}$ occurs for some $\tilde{a}> 3a /4$. In this case, the probability that we enter the Disaster Mitigation Epoch is less than or equal to
\[
\prob(\exists t \in [0,T]: |\bar{q}(t)| \ge q_0^*),
\]
where $\bar{q}:[0,T]\rightarrow \R$ is governed by
\[
d\bar{q} = (a - 2\tilde{a})\bar{q} dt + dW_t, \qquad \bar{q}(0) = (1+\varepsilon)q_0.
\]
Since $a - 2 \tilde{a} < 0$, we apply Lemma \ref{lem: decay} to get
\[
\prob(\exists t \in [0,T]: |\bar{q}(t)| \ge q_0^*) \le C_{T,q_0}\cdot \exp(-c_{T,q_0} a).
\]
This holds for any $\tilde{a} > 3a/4$; we deduce that
\begin{equation}\label{eq: dme 3}
\prob(\cD(( 3a/4, \infty))) \le C_{T,q_0}\cdot \exp(-c_{T,q_0} a).
\end{equation}

Combining \eqref{eq: dme 1.1}, \eqref{eq: dme 2}, \eqref{eq: dme 3} gives
\begin{equation*}
\begin{split}
\E[\cost_D ] &= \E[\cost_D \cdot \mathbbm{1}_{\cD((0, 3a/4))}] + \E[\cost_D \cdot \mathbbm{1}_{ \cD (( 3a/4, \infty))}]\\
& \le C_{T,q_0}\cdot  a\cdot \exp(-c_\varepsilon q_0^2 a) + C_{T,q_0}\cdot  a\cdot \exp(-c_{T,q_0} a)
\end{split}
\end{equation*}
for any $a \ge A$. Taking $A$ sufficiently large depending on $\varepsilon, T, q_0$ gives
\begin{equation*}
\E[\cost_D] < \varepsilon \; \text{for any}\; a \ge A.
\end{equation*}
Combining this with \eqref{eq: las j0} gives
\begin{equation}\label{eq: dme 5}
\E[\cost_D] < \varepsilon \cdot  C_{T,q_0}\cdot  \cJ_0(a;T,q_0) \; \text{for any}\; a \ge A.
\end{equation}

\subsection{Proof of Theorem \ref{thm: las}}
Note that
\begin{equation}\label{eq: las thm 1}
\cJ(\las,a;T,q_0) = \E[\cost_T] + \E[\cost_C] + \E[\cost_D].
\end{equation}
By \eqref{eq: test 2}, \eqref{eq: ctrl 10}, and \eqref{eq: dme 5} we have
\[
\cJ(\las,a;T,q_0) < (1+C_{T,q_0} \cdot \varepsilon^{1/4}) \cdot \cJ_0(a;T,q_0)\;\text{for any}\; a \ge A.
\]
By \eqref{eq: test 1}, \eqref{eq: ctrl 11}, and \eqref{eq: dme 4}, we have
\[
\cJ(\las,a;T,q_0) < C_{T,q_0}\cdot A^2\;\text{for any}\; a \le A.
\]
This proves Theorem \ref{thm: las}.

\bibliographystyle{plain}
\bibliography{ref}

\begin{thebibliography}{10}

\bibitem{abbasi2011regret}
Yasin Abbasi-Yadkori and Csaba Szepesv{\'a}ri.
\newblock Regret bounds for the adaptive control of linear quadratic systems.
\newblock In {\em Proceedings of the 24th Annual Conference on Learning Theory}, pages 1--26. JMLR Workshop and Conference Proceedings, 2011.

\bibitem{abeille2017thompson}
Marc Abeille and Alessandro Lazaric.
\newblock Thompson sampling for linear-quadratic control problems.
\newblock In {\em Artificial Intelligence and Statistics}, pages 1246--1254. PMLR, 2017.

\bibitem{astrom}
Karl Astr\"{o}m.
\newblock {\em Introduction to Stochastic Control Theory}.
\newblock Academic Press, 1970.

\bibitem{bertsekas2012dynamic}
Dimitri Bertsekas.
\newblock {\em Dynamic programming and optimal control: Volume I}, volume~1.
\newblock Athena scientific, 2012.

\bibitem{Brazy:2009}
D.P. Brazy.
\newblock Group chairman's factual report of investigation.
\newblock {\em National Transportation Safety Board Docket No. SA-532, Exhibit No. 12}, 2009.

\bibitem{carruth2022controlling}
Jacob Carruth, Maximilian~F. Eggl, Charles Fefferman, Clarence~W. Rowley, and Melanie Weber.
\newblock Controlling unknown linear dynamics with bounded multiplicative regret.
\newblock {\em Revista Matem{\'a}tica Iberoamericana}, 38(7):2185--2216, 2022.

\bibitem{boundeda}
Jacob Carruth, Maximilian~F. Eggl, Charles~L Fefferman, and Clarence~W. Rowley.
\newblock Optimal agnostic control of unknown linear dynamics in a bounded parameter range.
\newblock {\em forthcoming}, 2023.

\bibitem{cesa2006prediction}
Nicolo Cesa-Bianchi and G{\'a}bor Lugosi.
\newblock {\em Prediction, learning, and games}.
\newblock Cambridge university press, 2006.

\bibitem{chen2021black}
Xinyi Chen and Elad Hazan.
\newblock Black-box control for linear dynamical systems.
\newblock In {\em Conference on Learning Theory}, pages 1114--1143. PMLR, 2021.

\bibitem{chen2023regret}
Xinyi Chen, Edgar Minasyan, Jason~D Lee, and Elad Hazan.
\newblock Regret guarantees for online deep control.
\newblock {\em Proceedings of Machine Learning Research vol XX}, 1:34, 2023.

\bibitem{Cohen:2019}
Alon Cohen, Tomer Koren, and Yishay Mansour.
\newblock Learning linear-quadratic regulators efficiently with only $\sqrt{T}$ regret.
\newblock In Kamalika Chaudhuri and Ruslan Salakhutdinov, editors, {\em Proceedings of the 36th International Conference on Machine Learning}, volume~97 of {\em Proceedings of Machine Learning Research}, pages 1300--1309. PMLR, 09--15 Jun 2019.

\bibitem{dean2018regret}
Sarah Dean, Horia Mania, Nikolai Matni, Benjamin Recht, and Stephen Tu.
\newblock Regret bounds for robust adaptive control of the linear quadratic regulator.
\newblock {\em arXiv preprint arXiv:1805.09388}, 2018.

\bibitem{dean2019safely}
Sarah Dean, Stephen Tu, Nikolai Matni, and Benjamin Recht.
\newblock Safely learning to control the constrained linear quadratic regulator.
\newblock In {\em 2019 American Control Conference (ACC)}, pages 5582--5588. IEEE, 2019.

\bibitem{duchi2011adaptive}
John Duchi, Elad Hazan, and Yoram Singer.
\newblock Adaptive subgradient methods for online learning and stochastic optimization.
\newblock {\em Journal of Machine Learning Research}, 12(7), 2011.

\bibitem{faury2021regret}
Louis Faury, Yoan Russac, Marc Abeille, and Cl{\'e}ment Calauz{\`e}nes.
\newblock Regret bounds for generalized linear bandits under parameter drift.
\newblock {\em arXiv preprint arXiv:2103.05750}, 2021.

\bibitem{fefferman2021optimal}
Charles Fefferman, Bernat~Guill{\'e}n Pegueroles, Clarence~W Rowley, and Melanie Weber.
\newblock Optimal control with learning on the fly: a toy problem.
\newblock {\em Revista matem{\'a}tica iberoamericana}, 38(1):175--187, 2021.

\bibitem{feller}
Willliam Feller.
\newblock {\em An Introduction to Probability Theory and Its Applications, Volume 2}.
\newblock John Wiley \& Sons, Inc., 1971.

\bibitem{furieri2020learning}
Luca Furieri, Yang Zheng, and Maryam Kamgarpour.
\newblock Learning the globally optimal distributed {LQ} regulator.
\newblock In {\em Learning for Dynamics and Control}, pages 287--297. PMLR, 2020.

\bibitem{goel2021competitive}
Gautam Goel and Babak Hassibi.
\newblock Competitive control.
\newblock {\em arXiv preprint arXiv:2107.13657}, 2021.

\bibitem{gurevich2022optimal}
Daniel Gurevich, Debdipta Goswami, Charles~L Fefferman, and Clarence~W Rowley.
\newblock Optimal control with learning on the fly: System with unknown drift.
\newblock In {\em Learning for Dynamics and Control Conference}, pages 870--880. PMLR, 2022.

\bibitem{hazancontrol}
Elad Hazan and Karan Singh.
\newblock Online nonstochastic control.
\newblock {\em arXiv preprint arXiv:2211.09619}, 2022.

\bibitem{jedra2022minimal}
Yassir Jedra and Alexandre Proutiere.
\newblock Minimal expected regret in linear quadratic control.
\newblock In {\em International Conference on Artificial Intelligence and Statistics}, pages 10234--10321. PMLR, 2022.

\bibitem{kargin2022thompson}
Taylan Kargin, Sahin Lale, Kamyar Azizzadenesheli, Anima Anandkumar, and Babak Hassibi.
\newblock Thompson sampling achieves $\backslash$tilde o ($\backslash$sqrt $\{$T$\}$) regret in linear quadratic control.
\newblock {\em arXiv preprint arXiv:2206.08520}, 2022.

\bibitem{kumar2022online}
Raunak Kumar, Sarah Dean, and Robert~D Kleinberg.
\newblock Online convex optimization with unbounded memory.
\newblock {\em arXiv preprint arXiv:2210.09903}, 2022.

\bibitem{malik2019derivative}
Dhruv Malik, Ashwin Pananjady, Kush Bhatia, Koulik Khamaru, Peter Bartlett, and Martin Wainwright.
\newblock Derivative-free methods for policy optimization: Guarantees for linear quadratic systems.
\newblock In {\em The 22nd International Conference on Artificial Intelligence and Statistics}, pages 2916--2925. PMLR, 2019.

\bibitem{mania2019certainty}
Horia Mania, Stephen Tu, and Benjamin Recht.
\newblock Certainty equivalence is efficient for linear quadratic control.
\newblock {\em arXiv preprint arXiv:1902.07826}, 2019.

\bibitem{martin2022safe}
Andrea Martin, Luca Furieri, Florian D{\"o}rfler, John Lygeros, and Giancarlo Ferrari-Trecate.
\newblock Safe control with minimal regret.
\newblock In {\em Learning for Dynamics and Control Conference}, pages 726--738. PMLR, 2022.

\bibitem{minasyan2021online}
Edgar Minasyan, Paula Gradu, Max Simchowitz, and Elad Hazan.
\newblock Online control of unknown time-varying dynamical systems.
\newblock {\em Advances in Neural Information Processing Systems}, 34:15934--15945, 2021.

\bibitem{powell2007approximate}
Warren~B Powell.
\newblock {\em Approximate Dynamic Programming: Solving the curses of dimensionality}, volume 703.
\newblock John Wiley \& Sons, 2007.

\bibitem{robbins1952some}
Herbert Robbins.
\newblock Some aspects of the sequential design of experiments.
\newblock {\em Bulletin of the American Mathematical Society}, 58(5):527--535, 1952.

\bibitem{simchowitz2018learning}
Max Simchowitz, Horia Mania, Stephen Tu, Michael~I Jordan, and Benjamin Recht.
\newblock Learning without mixing: Towards a sharp analysis of linear system identification.
\newblock In {\em Conference On Learning Theory}, pages 439--473. PMLR, 2018.

\bibitem{vermorel2005multi}
Joannes Vermorel and Mehryar Mohri.
\newblock Multi-armed bandit algorithms and empirical evaluation.
\newblock In {\em European Conference on Machine Learning}, pages 437--448. Springer, 2005.

\bibitem{wagenmaker2020active}
Andrew Wagenmaker and Kevin Jamieson.
\newblock Active learning for identification of linear dynamical systems.
\newblock In {\em Conference on Learning Theory}, pages 3487--3582. PMLR, 2020.

\bibitem{wei2021non}
Chen-Yu Wei and Haipeng Luo.
\newblock Non-stationary reinforcement learning without prior knowledge: An optimal black-box approach.
\newblock In {\em Conference on Learning Theory}, pages 4300--4354. PMLR, 2021.

\end{thebibliography}
\end{document}